 \let\OLDthebibliography\thebibliography
\renewcommand\thebibliography[1]{
  \OLDthebibliography{#1}
  \setlength{\parskip}{0pt}
  \setlength{\itemsep}{4pt plus 0ex}
}
\newcommand{\trw}{\mathsf{tw}} 
\newcommand{\cw}{\mathsf{cw}}
\newcommand{\iso}{\varphi} 
\newcommand{\st}{\mathsf{st}} 
\DeclareMathOperator{\ID}{Id}
\newcommand\inv{\operatorname{Inv}}
\renewcommand{\Pr}[1]{\mathbb{P}\!\left(\,#1\,\right)}
\newcommand{\Ex}[1]{\mathbb{E} \left[\,#1\,\right]}
\newcommand{\BO}[1]{\mathcal{O}\!\left(#1\right)}
\newcommand{\lo}[1]{o\!\left(#1\right)}
\newcommand{\eps}{\varepsilon}
\newcommand{\diam}{\mathsf{diam}}
\newcommand{\ber}[1]{\operatorname{Ber}\!\left(#1\right)}
\newcommand{\geo}[1]{\operatorname{Geo}\!\left(#1\right)}
\renewcommand{\leq}{\leqslant}
\renewcommand{\geq}{\geqslant}
\renewcommand{\le}{\leqslant}
\renewcommand{\ge}{\geqslant}
\renewcommand{\tilde}{\widetilde}
\renewcommand{\bar}{\overline}
\newcommand{\ce}{\mathcal{E}}
\newcommand{\cf}{\mathcal{F}}
\newcommand{\cp}{\mathcal{P}}
\newcommand{\NP}{\mathsf{NP}}
\newtheorem{theorem}{Theorem}[section]
\newtheorem{lemma}[theorem]{Lemma}
\newtheorem{corollary}[theorem]{Corollary}
\newtheorem*{claim}{Claim}
\newtheorem{remark}[theorem]{Remark}
\newenvironment{poc}{\begin{proof}[Proof of Claim]}{\end{proof}}
\DeclareMathOperator{\layer}{\mathsf{layer}}
\begin{document}
\providecommand{\keywords}[1]{\textbf{\textit{Keywords---}} #1}
\providecommand{\codes}[1]{\textbf{\textit{AMS MSC 2010---}} #1}

\title{Tangled Paths: A Random Graph Model from Mallows Permutations\thanks{An extended abstract of this paper appeared at EUROCOMB 2023 \cite{euro}.}}
 \date{\vspace*{-6mm}}
\author[1]{Jessica Enright}
\author[1]{Kitty Meeks}
\author[1]{William Pettersson}
\author[1,2]{John Sylvester}
\affil[1]{School of Computing Science, University of Glasgow, UK\\
	\texttt{\{jessica.enright,kitty.meeks,william.pettersson\}@glasgow.ac.uk}}
\affil[2]{Department of Computer Science, University of Liverpool, UK\\ \texttt{john.sylvester@liverpool.ac.uk}} 
\maketitle

\begin{abstract}
	We introduce the random graph $\mathcal{P}(n,q)$ which results from taking the union of two paths of length $n\geq 1$, where the vertices of one of the paths have been relabelled according to a Mallows permutation with  parameter $0<q(n)\leq 1$. This random graph model, the tangled path, goes through an evolution: if $q$ is close to $0$ the graph bears resemblance to a path, and as $q$ tends to $1$ it becomes an expander. In an effort to understand the evolution of $\mathcal{P}(n,q)$ we determine the treewidth and cutwidth of $\mathcal{P}(n,q)$ up to log factors for all $q$. We also show that the property of having a separator of size one has a sharp threshold. In addition, we prove bounds on the diameter, and vertex isoperimetric number for specific values of $q$.  
\end{abstract}
\keywords{Mallows permutations, random graphs, treewidth, cutwidth.}\\
\codes{05C80, 05A05, 68Q87, 05C78.}
\section{Introduction}
Given two graphs $G,H$ on a common vertex set $[n]=\{1,\dots, n \}$, and a permutation $\sigma$ on $[n]$, it is natural to consider the following graph \[\layer(G, \sigma(H))=\left([n],\; E(G) \cup\left\{\sigma(x)\sigma(y) : xy \in E(H)\right\}\right),\]which is the union of two graphs where the second graph has been relabelled by a permutation $\sigma$. Constructions of graphs via unions are very natural, and have appeared in several contexts, see Section \ref{sec:related}. Let $P_n$ be the path on $[n]$ that connects $i$ to $i+1$, for $i \in [n-1]$, and let $S_n$ be the set of all permutations on $[n]$. Consider the following scenario: one must choose a permutation $\sigma\in S_n$ with the goal of making $\layer(P_n,\sigma(P_n))$ as different from a path as possible. There are many parameters one may use to measure the difference between a connected graph $G$ and a path; for example one may look at the diameter $\diam(G)$ or the vertex isoperimetric number $\iso(G)$, as the path is extremal for these parameters. The treewidth $\trw(G)$ which, broadly speaking, measures how far (globally) the graph is from being a tree \cite{Kloks}, is another natural candidate. Given two or more paths  one can build a grid-like graph (see \cite[Lemma 8]{TwoDichot} for more details) which would have treewidth and diameter $\Theta(\sqrt{n})$. If we choose a permutation uniformly at random, then as a consequence of a result of Kim \& Wormald \cite[Theorem 1]{KW}, with high probability the resulting graph is a bounded degree expander. Thus, in this case, the graph $\layer(P_n,\sigma(P_n))$ has treewidth $\Theta(n)$ and diameter $\Theta(\log n)$, so by these parameters it is essentially as far from a path as a sparse graph can be.  

  The example above shows that even restricting the input graphs to paths can produce rich classes of graphs. Having seen what happens for a uniformly random permutation, one may ask about the structure of $\layer(P_n,\sigma(P_n))$ when $\sigma$ is drawn from a distribution on $S_n$ that is not uniform. One of the most well known non-uniform distributions on $S_n$ is the Mallows distribution, introduced by Mallows \cite{Mallows} in the late 1950s in the context of statistical ranking theory.  Recently it has been the subject of renewed interest for other applications, and as an interesting and natural model to study in its own right, see Section \ref{sec:related}. The distribution has a parameter $q$ which, roughly speaking, controls the amount of disorder in the permutation.
 
 For real $q>0$ and integer $n\ge 1$, the $(n,q)$-\textit{Mallows measure} $\mu_{n,q}$ on $S_n$ is given by
\begin{equation}\label{eq:mu_n_q_def}
\mu_{n,q}(\sigma) = \frac{q^{\inv(\sigma)}}{Z_{n,q}}\qquad\text{for any }\sigma \in S_n, 
\end{equation}
where $\inv(\sigma) = |\{(i,j)\,:\,\text{$i<j$ and $\sigma(i)>\sigma(j)$}\}|$ is the number of inversions in the permutation $\sigma$ and $Z_{n,q}$ is given explicitly by the following
formula \cite[Equation (2)]{BhatSubSeq}:
\begin{equation*}
Z_{n,q} =\prod_{i=1}^{n}\left(1 + q + \cdots + q^{i-1} \right)=  \prod_{i=1}^{n}\frac{1-q^{i}}{1-q}.
\end{equation*}  
When $q\rightarrow 0$, the distribution $\mu_{n,q}$ converges weakly to the degenerate distribution on the identity permutation. We extend $\mu_{n,q}$ to $q=0$ by setting $\mu_{n,0}$ to be the probability measure assigning $1$ to the identity permutation. On the other hand if $q=1$ then $\mu_{n,1}$ is the uniform measure on $S_n$. One can see that $\sigma \sim \mu_{n,q}$ has distribution $\mu_{n,1/q}$ when reversed.  

We study the random graph given by $\layer(P_n, \sigma(P_n))$, where $\sigma \sim \mu_{n,q}$ and $0\leq q:=q(n)\leq 1$. From now on we call this random graph the \textit{tangled path} model and denote it by $\cp(n,q)$. Thus a random graph $\cp(n,q)$ has vertex set $[n]$ and (random) edge set $E(P_n)\cup \{\sigma(i)\sigma(i+1) : i\in [n-1] \}$, where $\sigma \sim \mu_{n,q}$. We restrict to $q\in [0,1]$ as reversing the permutation does not affect our construction (up to a relabelling, see \eqref{reversed}). We also identify any multi-edges created as one edge, however this detail is not important for any of our results. This paper will focus on $\cp(n,q)$; as we have seen already combining paths can give rise to interesting and varied graphs, and the Mallows permutation gives our model a parameter $q$ which, roughly speaking, increases the `tangled-ness' of the graph. Other reasons for using Mallows permutations are that they are well studied (see Section \ref{sec:related}), and they are mathematically tractable since they can be generated by a sequence of independent random variables (see Section \ref{sec:Mallowsproc}).

By the above, $\cp(n,0)$ is a path and $\cp(n,1)$ is an expander with high probability; the latter follows from \cite[Theorem 1]{KW} but we also give a self-contained proof in this paper. Our ultimate aim is to understand the structure of $\cp(n,q)$ for intermediate values of $q$, and this paper takes the first steps in this direction. Informally, if $q$ is not tending to $1$ too fast, then $\cp(n,q)$ is `path-like'; we show that if $q<1$ is fixed the diameter is linear (Theorem \ref{diam}), and there is a sharp threshold for having a single cut vertex at $q_c=1-\frac{\pi^2}{6\log n} $ (Theorem \ref{prop:sep}). For $q\rightarrow 1$ sufficiently fast, it makes more sense to measure the complexity of the internal structure of $\cp(n,q)$ by how much it differs from a tree. Here we show that, up to logarithmic factors, the treewidth \cite{Kloks} of $\cp(n,q)$ grows at rate $(1-q)^{-1}$ (Theorem \ref{thm:treepathcut}) until the graph becomes an expander at around $q=1-\frac{1}{n} $ (Theorem \ref{treebanddiam}), indicating that, in the sense of treewidth, the complexity of the structure grows smoothly with $q$. This behaviour contrasts with the binomial/Erd\H{o}s-R\'{e}nyi random graph \cite{FriezeKaronski} where treewidth increases rapidly from being bounded by a constant, to $\Theta(n)$ as the average degree rises from below one to above one \cite{do2022note,LeeLO12}.

 Further motivation for this line of study comes from practical algorithmic applications.  Many real-world systems -- including social, biological and transport networks -- involve qualitatively different types of edges, where each type of edge generates a ``layer'' with specific structural properties \cite{Multi1,Multi2}.  For example, when modelling the spread of disease in livestock, one layer of interest arises from physical adjacency of farms, and so is determined entirely by geography.  A second epidemiologically-relevant layer could describe the pairs of farms which share equipment: this is no longer fully determined by geography, but will nevertheless be influenced by the location of farms, as those that are geographically close are more likely to cooperate in this way.  It is known that algorithmically useful structure in individual layers of a graph is typically lost when the layers are combined adversarially \cite{TwoDichot}. The present work can be seen as an attempt to understand the structure of graphs generated from two simple layers which are both influenced to some extent by a shared underlying ``geography''. In this setting the treewidth $\trw$ is a natural parameter as many $\NP$-hard problems become tractable when parametrised by $\trw$ \cite[Ch.\ 7]{Cygan}.

\subsection{Our Results}\label{sec:intro}

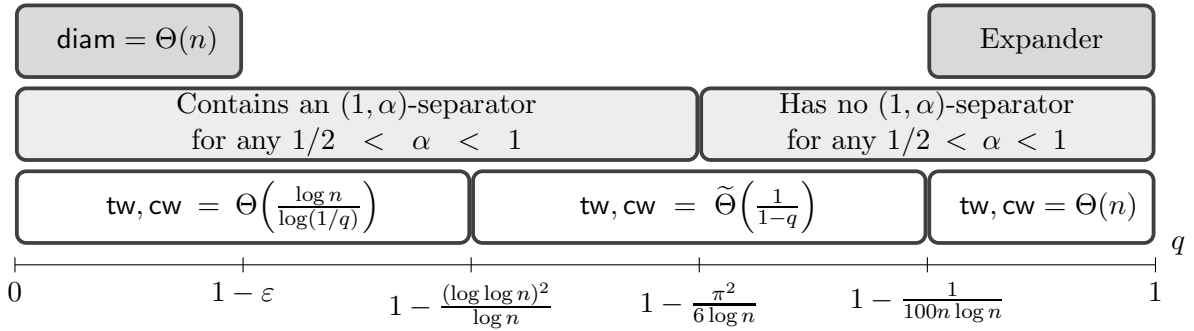
\begin{figure}
\begin{center}
\begin{tikzpicture} 

\draw[-] (0,0) to node[pos=1.02,above]{$q$} (15,0);

\draw (0,0.1) to node[pos=1,below]{$0$}   (0,-0.1);

\draw (15,0.1) to node[pos=1,below]{$1$}   (15,-0.1);

\draw (3,0.1) to node[pos=1,below]
{$1-\eps$} 
  (3,-0.1);

\draw (6,0.1) to node[pos=1,below]{
	$1-  \frac{(\log\log n)^2}{\log n } $ 
}  (6,-0.1);

\draw (9,0.1) to node[pos=1,below]{ $1-\frac{\pi^2}{6\log n } $ 
} node[pos=0,above]{} (9,-0.1);

\draw (12,0.1) to node[pos=1,below]{$1-\frac{1}{50n}$} node[pos=0,above]{} (12,-0.1);

 	\draw (3,1.4) node[anchor=north]{\begin{tcolorbox}[ width=6cm,height=1cm, halign=center,valign=center,colback=white ]
 	$\trw,\cw = \Theta\!\left(\sqrt{\frac{\log n}{\log (1/q)}}\right)$ 
 		\end{tcolorbox}};
 	
 	\draw (9,1.4) node[anchor=north]{\begin{tcolorbox}[ width=6cm,height=1cm,  halign=center,valign=center,colback=white]
 	  $\trw,\cw = \tilde{\Theta}\!\left(\frac{1}{1-q}\right)$ 
 		\end{tcolorbox}};

	\draw (13.5,1.4) node[anchor=north]{\begin{tcolorbox}[ width=3cm,height=1cm, halign=center,valign=center,colback=white]
	\mbox{$\!\!\trw,\cw = \Theta(n)$}
	\end{tcolorbox}};


\draw (4.5,2.5) node[anchor=north]{\begin{tcolorbox}[ width=9cm,height=1cm, halign=center,valign=center,colback=black!7]
	Contains an $(1,\alpha)$-separator for any $1/2<\alpha < 1$ 
	\end{tcolorbox}};

\draw (12,2.5) node[anchor=north]{\begin{tcolorbox}[ width=6cm,height=1cm, halign=center,valign=center,colback=black!7]
	Has no $(1,\alpha)$-separator for any $1/2<\alpha < 1$
	\end{tcolorbox}};


\draw (1.5,3.6) node[anchor=north]{\begin{tcolorbox}[ width=3cm,height=1cm, halign=center,valign=center,colback=black!15]
	\mbox{$ \diam = \Theta(n)$} 
	\end{tcolorbox}};

\draw (13.5,3.6) node[anchor=north]{\begin{tcolorbox}[ width=3cm,height=1cm, halign=center,valign=center,colback=black!15]
Expander
	\end{tcolorbox}};

\end{tikzpicture}
\vspace*{-6mm}
\end{center}	 
\caption{\label{fig:resultslinething}The diagram above gives a representation of our main results. All results above hold with high probability, and we say that $f(n) = \tilde{\Theta}(g(n)) $ if there exist constants $c,C>0 $ and $n_0$ such that $c\cdot g(n)/\log g(n)\leq f_n \leq C\cdot  g(n)\cdot \log g(n)$ for all $n\geq n_0$.   }
\end{figure}
In what follows, the integer $n\geq 1 $ denotes the number of vertices in the graph (or elements in a permutation) and $q:=q(n)$, the parameter of the Mallows permutation (or related tangled path), is a real-valued function of $n$ taking values in $[0,1]$. We say a sequence of events $\ce_n$ occurs \textit{with high probability} (w.h.p.) if $\Pr{\ce_n}\rightarrow 1$ as $n\rightarrow \infty$. Throughout $\log$ is base $\mathrm{e}$. See Figure \ref{fig:resultslinething} for a summary of our results. 

A graph $G$ is a \textit{vertex-expander} if there exists $c>0$ (independent of $n$) such that any set $S\subseteq V$ with $|S|\leq \lceil n/2\rceil $ is adjacent to at least $c|S|$ vertices in $V\backslash S$, see Section \ref{sec:widthdef}. As mentioned above, when $q=1$ the permutation is uniform, and so the fact that w.h.p.\ $\cp(n,1)$ is an expander follows from \cite[Theorem 1]{KW}. We give a self-contained proof of this fact,  which also shows that for $q$ sufficiently close to $1$, this still holds.

\begin{theorem}\label{treebanddiam}
 If $ q \geq 1 - \frac{1}{50n}$, then w.h.p. $\cp(n,q)$ is a  vertex-expander.  \end{theorem}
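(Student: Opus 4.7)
The bounded-degree assertion is immediate: $\cp(n,q)$ is the edge-union of two paths, hence has maximum degree at most $4$ with probability one. For the expansion, the key structural observation is that for any $S \subseteq [n]$,
\[
\left|N_{\cp(n,q)}(S) \setminus S\right| \;\geq\; \max\!\left(\left|N_{P_n}(S)\setminus S\right|,\; \left|N_{P_n}(\sigma^{-1}(S))\setminus \sigma^{-1}(S)\right|\right),
\]
and for any $X \subseteq [n]$ one has $|N_{P_n}(X)\setminus X| = 2 r_X - O(1)$, where $r_X$ is the number of maximal runs of $X$ in $[n]$. Hence it suffices to show that, with probability $1-o(1)$, every $S$ of size $s\in[1,\lfloor n/2\rfloor]$ satisfies $\max(r_S,\,r_{\sigma^{-1}(S)}) \geq c s$ for an absolute constant $c>0$. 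Sets with $r_S \geq cs$ are automatically good; the remaining sets are unions of fewer than $cs$ intervals, so there are at most $\binom{n}{2cs}$ of them for each value of $s$.

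For each such $S$, the plan is to bound $\mu_{n,q}(r_{\sigma^{-1}(S)} < cs)$ by first establishing concentration under the uniform measure and then transferring the bound to Mallows. Under $\mu_{n,1}$ one writes $r_{\sigma^{-1}(S)} = 1 + \sum_{i=1}^{n-1} X_i$ with $X_i = \mathbf{1}_{\{i\in\sigma^{-1}(S),\,i+1\notin\sigma^{-1}(S)\}}$; each $X_i$ has mean $\Theta(s/n)$ and the $X_i$'s are negatively associated, so a Chernoff-type bound gives $\mu_{n,1}(r_{\sigma^{-1}(S)} < cs) \leq e^{-\Omega(s)}$. To transfer the bound to $\mu_{n,q}$, I would work directly with the sequential construction of $\sigma$ via independent truncated-geometric variables $Z_1,\dots,Z_n$ (Section~\ref{sec:Mallowsproc}): the event $\{r_{\sigma^{-1}(S)}<cs\}$ pulls back to an event on the $Z_i$, and the near-uniformity of each $Z_i$ in our regime together with their independence should yield an analogous exponential tail.

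The hardest step is closing the union bound, which requires a tail of order $\exp(-\Omega(s\log n))$ to beat the combinatorial factor $\binom{n}{2cs} \leq n^{2cs}$. The naive density comparison $\mu_{n,q}(E) \leq (n!/Z_{n,q})\,\mu_{n,1}(E)$ is far too lossy here: a direct computation (using $(1-q^i)/(1-q)\approx i(1-ip/2)$ with $p=1-q$) gives $n!/Z_{n,q} = \exp(\Theta(n^2(1-q))) = \exp(\Theta(n/\log n))$ when $1-q \leq 1/(100n\log n)$, which swamps any $e^{-\Omega(s)}$ tail for small $s$. The resolution should come from a more delicate concentration argument using the geometric representation of $\sigma$ that exploits both the independence of the $Z_i$ and the fact that $S$ has only $O(s)$ intervals (so there are few opportunities for runs of $\sigma^{-1}(S)$ to merge). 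The precise constant $1/(100n\log n)$ in the hypothesis is tuned so that the resulting $\exp(-\Omega(s\log n))$ tail, summed against $\binom{n}{2cs}$ over $s\leq n/2$, yields an $o(1)$ failure probability.
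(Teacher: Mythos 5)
Your degree bound and the reduction of expansion to counting runs of $S$ and $\sigma^{-1}(S)$ are fine in outline, and the per-set union bound would indeed close under the uniform measure (this is essentially what the paper does, in edge-boundary form, in Lemma \ref{lem:expander}, via the count $2\binom{n-1}{k}$ of path-subsets with boundary $k$; note in passing that your claimed uniform tail $e^{-\Omega(s)}$ from negative association is not strong enough for small $s$ — one needs $e^{-\Omega(s\log(n/s))}$, which comes from the binomial-coefficient computation rather than a plain Chernoff bound). But the central step of your argument — transferring the per-set tail from $\mu_{n,1}$ to $\mu_{n,q}$, or proving an $\exp(-\Omega(s\log n))$ tail directly under the Mallows measure — is never carried out. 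You correctly identify that the density ratio $n!/Z_{n,q}=\exp(\Theta(n/\log n))$ swamps any $e^{-\Omega(s)}$ tail for small $s$, and then assert that "a more delicate concentration argument using the geometric representation" should resolve it; no such argument is given, and this is exactly where the difficulty lies. Even the natural coordinatewise coupling of the truncated-geometric inputs to uniform inputs only mis-couples $\Theta(n/\log n)$ coordinates, and conditioning on adversarial values there inflates the probability of an event by a factor up to $n^{\Theta(n/\log n)}=e^{\Theta(n)}$ — which again destroys per-set tails of order $e^{-s\log n}$ when $s$ is small. So as written the proposal has a genuine gap at its decisive step, and your guess that the constant $1/(100n\log n)$ is tuned to make a per-set union bound close is not the actual mechanism.

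The paper avoids per-set transfer altogether: Lemma \ref{lem:expander} shows the \emph{global} event $\{\iso(\cp(n,1))\ge 1/40\}$ fails with probability at most $1000\,n^{7/2}(3/4)^n$, i.e.\ exponentially small in $n$, and Lemma \ref{lem:largeqpatterncouple} transfers this single event to $\mu_{n,q}$: with the coordinatewise coupling, w.h.p.\ at most $O(n/\log n)$ inputs fail to couple, and any event's probability changes by a factor at most $n^{O(n/\log n)}=e^{cn}$ with $c$ small (this is precisely where $q\ge 1-\frac{1}{100n\log n}$ enters, ensuring $c<\log(4/3)$), so the exponentially small uniform bound survives. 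If you want to rescue your route, you would either need to prove Mallows analogues of the run-count tails with the full $\exp(-\Omega(s\log(n/s)))$ strength for every $s$ (which is not supplied by near-uniformity of the individual $Z_i$), or restructure the argument so that only one exponentially-unlikely global event needs to be transferred — which is the paper's approach.
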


For an integer $s\geq 1 $ and $1/2\leq \alpha< 1$ we say that a graph $G$ with vertex set $V$ has an $(s,\alpha)$\textit{-separator} if there is a vertex subset $S$ with $|S|\leq s$ such that $V\backslash S $ can be partitioned into two disjoint sets of at most $\alpha|V|$ vertices with no crossing edges, see Section \ref{sec:notate}. Balanced separators (e.g.\  $\alpha=2/3$) are useful for designing divide and conquer algorithms, in particular for problems on planar graphs \cite{TarLip}, and are connected to other notions of sparsity \cite{Sparsity}.

Observe that, for any fixed $1/2<\alpha<1$, if $G$ is a vertex expander then there exists a $c>0$ such that $G$ has no $(c n,\alpha)$-separator. At the other extreme, the path has a $(1,\alpha)$\textit{-separator}. We show that for $\cp(n,q)$ this `path-like' property disappears around $q_c=1-\frac{\pi^2}{6\log n}$.

		\begin{theorem}\label{prop:sep} For any fixed $1/2< \alpha <1 $ we have  
			\[\lim_{n\rightarrow \infty}\Pr{\cp(n,q) \text{ has a }(1,\alpha )\text{-separator}} = \begin{cases}
			0 &\text{ if } \frac{\pi^2}{6(1-q)} - \log n + \frac{5\log\log n}{2} \rightarrow \infty  \\
			1&\text{ if } \frac{\pi^2}{6(1-q)} - \log n + \frac{9\log\log n}{2} \rightarrow -\infty 
			\end{cases}.  \]
		\end{theorem}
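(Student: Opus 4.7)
My plan is to reduce ``$\cp(n,q)$ has a $(1,\alpha)$-separator'' to an explicit disjunction of events on $\sigma\sim\mu_{n,q}$, and then apply first- and second-moment arguments using the classical Dedekind $\eta$-function asymptotic for the $q$-Pochhammer symbol $(q;q)_\infty$.

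\textbf{Reduction and exact probability.} For $v\in I:=[\lceil(1-\alpha)n\rceil,\lfloor\alpha n\rfloor]$ the components of $P_n\setminus\{v\}$ are $A=[v-1]$ and $B=\{v+1,\ldots,n\}$, both of size $\leq\alpha n$; so $v$ is a $(1,\alpha)$-separator iff no surviving edge of $\sigma(P_n)$ crosses from $A$ to $B$. Comparing the position $k=\sigma^{-1}(v)$ with $|A|=v-1$ and $|B|=n-v$ forces (for internal $v$) $k\in\{v,n-v+1\}$, and yields that $v$ is a separator iff one of
\begin{align*}
E_v&:\;\sigma([v-1])=[v-1]\text{ and }\sigma(v)=v,\\
F_v&:\;\sigma(\{n-v+2,\ldots,n\})=[v-1]\text{ and }\sigma(n-v+1)=v
\end{align*}
holds. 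The factorisation property of $\mu_{n,q}$ (conditional on $\sigma([v-1])=[v-1]$ the restriction of $\sigma$ to positions $\{v,\ldots,n\}$ is Mallows$(q)$) plus the standard first-entry formula gives
\[\Pr{E_v}=\frac{Z_{v-1,q}Z_{n-v,q}}{Z_{n,q}}=(1-q)\,\frac{(q;q)_{v-1}(q;q)_{n-v}}{(q;q)_n},\qquad (q;q)_k:=\prod_{j=1}^k(1-q^j),\]
while a direct inversion count on the block reversal gives $\Pr{F_v}=q^{(n-v)v+v-1}\Pr{E_v}$, super-polynomially smaller for $v\in I$ once $n(1-q)\to\infty$ and hence absorbable into error terms.

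\textbf{First moment.} The modular transformation of the Dedekind $\eta$-function gives $(q;q)_\infty=(1+o(1))\sqrt{2\pi/(1-q)}\exp(-\pi^2/(6(1-q)))$ as $q\to 1^-$, and for $v\in I$ each of $(q;q)_{v-1},(q;q)_{n-v},(q;q)_n$ agrees with $(q;q)_\infty$ up to factors $1+o(1)$, so $\Pr{E_v}=(1+o(1))\sqrt{2\pi(1-q)}\exp(-\pi^2/(6(1-q)))$. Setting $N=\sum_{v\in I}\mathbb{1}_{E_v}$ and using $\log(1-q)=-\log\log n+O(1)$ on the threshold scale,
\[\log\Ex{N}=\log n-\tfrac{\pi^2}{6(1-q)}-\tfrac12\log\log n+O(1),\]
so the first hypothesis of the theorem is exactly $\Ex{N}\to 0$; combined with the negligible $F_v$ contribution the union bound gives the no-separator direction.

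\textbf{Second moment, and main obstacle.} Iterating the Mallows factorisation, for $u<v$ in $I$,
\[\Pr{E_u\cap E_v}=\Pr{E_u}\cdot p^{(n-u)}_{v-u},\quad p^{(m)}_k:=\tfrac{Z_{k-1}Z_{m-k}}{Z_m}=(1-q)\tfrac{(q;q)_{k-1}(q;q)_{m-k}}{(q;q)_m}.\]
When $k$ and $m-k$ are both $\gg(1-q)^{-1}$ one has $p^{(m)}_k=(1+o(1))(1-q)(q;q)_\infty\sim\Pr{E_v}$, so the covariances vanish on the bulk of pairs. The technically hardest step is estimating the remaining ``boundary'' contributions from $k$ or $m-k$ of order $\leq (1-q)^{-1}$, where the ratio $p^{(m)}_k/\Pr{E_v}$ can be polynomially large in $n$; I will bound these using the identity $(q;q)_{k-1}-(q;q)_\infty=(q;q)_{k-1}\bigl(1-(q^k;q)_\infty\bigr)$ combined with the standard estimate $1-(q^k;q)_\infty\leq q^k/(1-q)$, and by tracking $(q;q)_k$ through its transition regime $k\sim(1-q)^{-1}$, to obtain $\operatorname{Var}(N)\leq C\,\Ex{N}\,(\log n)^{2}$. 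Chebyshev then gives $\Pr{N\geq 1}\geq 1-O((\log n)^2/\Ex{N})\to 1$ as soon as $\Ex{N}\gg(\log n)^{2}$, which via the expression for $\log\Ex{N}$ above is precisely the second hypothesis $\frac{\pi^2}{6(1-q)}-\log n+\tfrac52\log\log n\to-\infty$; the $(\log n)^2$ inflation in the variance bound is exactly what opens the $2\log\log n$ gap between the two cases of the theorem.
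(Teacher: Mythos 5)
Your reduction and core calculation are sound, and they reach the theorem by a genuinely different route from the paper. Your events $E_v,F_v$ are exactly the paper's $\mathcal{C}_v^{\mathcal{R}},\mathcal{C}_v^{\mathcal{F}}$ (up to the reversal discussed around \eqref{reversed}, cf.\ Lemma \ref{lem:seplem}), and your exact probability $(1-q)(q;q)_{v-1}(q;q)_{n-v}/(q;q)_n$ is the same quantity the paper obtains dynamically through the Mallows process in Lemma \ref{lem:expforF_k}; your Dedekind-$\eta$ asymptotic for $(q;q)_\infty$ is analytically the same content as the paper's Euler--Maclaurin-plus-dilogarithm estimate (Lemmas \ref{lem:maclaurin}--\ref{lem:precisecor}), since $\log(q;q)_\infty=\sum_i\log(1-q^i)$. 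The real divergence is in the second moment: because the flush events are global, the paper replaces them by ``local flushes'' and uses a generic Chebyshev bound for local events (Lemmas \ref{lem:localify} and \ref{secmom}), paying a locality factor of order $\frac{\log n}{1-q}\approx(\log n)^2$; you instead compute $\Pr{E_u\cap E_v}=\Pr{E_u}\,p^{(n-u)}_{v-u}$ exactly (correct: the Mallows weight factorises over the three blocks). Your claimed bound $\operatorname{Var}(N)\leq C\,\Ex{N}(\log n)^2$ is attainable and matches the paper's accounting (both need $\Ex{N}\gg(\log n)^2$, whence the $\tfrac52\log\log n$); in fact, carrying out your boundary analysis with $(q;q)_{k-1}\leq (k-1)!\,(1-q)^{k-1}$ for $k\lesssim(1-q)^{-1}$ and $(q;q)_{k-1}\leq\mathrm{e}^{-c/(1-q)}$ beyond, the close-pair covariances appear to sum to $o(\Ex{N})$, so your route looks capable of improving the $1$-direction beyond what the paper's localisation method can give (cf.\ Remark \ref{rmk:sep}). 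One quantitative slip: for $p^{(m)}_k=(1+o(1))(1-q)(q;q)_\infty$ you need $k$ and $m-k$ of order at least $\frac{1}{1-q}\log\frac{1}{1-q}$, not merely $\gg(1-q)^{-1}$; your own transition-regime analysis covers this.

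The one genuine gap is the extreme range of $q$ in the $0$-direction. The first case of the theorem includes all $q$ with $\frac{\pi^2}{6(1-q)}-\log n+\frac12\log\log n\to\infty$, in particular $1-q=O(1/n)$, $1-q=o(1/n)$ and $q=1$. There both of your simplifications fail: $(q;q)_{v-1},(q;q)_{n-v}$ are no longer $(1+o(1))(q;q)_\infty$ (this needs $q^{(1-\alpha)n}/(1-q)\to0$), and $\Pr{F_v}=q^{(n-v)v+v-1}\Pr{E_v}$ is no longer negligible, since $q^{(n-v)v}$ is bounded below by a constant when $n(1-q)=O(1)$ --- exactly the regime your caveat ``once $n(1-q)\to\infty$'' excludes. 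You must supply a separate argument there, as the paper does in Case (i) of its proof via the cruder bound of Lemma \ref{lem:cheapflush} (and Lemma \ref{treebanddiam1} at $q=1$). Within your framework a clean patch exists: writing $\Pr{E_v}=\bigl(\binom{n}{v-1}_{q}\,[n-v+1]_q\bigr)^{-1}$ with $[m]_q=1+q+\cdots+q^{m-1}$, both factors are polynomials in $q$ with nonnegative coefficients, hence nondecreasing on $[0,1]$, so for $q\geq 1-n^{-4/5}$ the probability is at most its value at $1-n^{-4/5}$, which your asymptotic already makes $\exp(-\Omega(n^{4/5}))$; similarly $\Pr{F_v}\leq q^{(v-1)(n-v+1)}/\binom{n}{v-1}_{q}\leq 1/\binom{n}{v-1}_{q}$. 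With that regime handled, your plan goes through.
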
 
	
	In other words, Theorem \ref{prop:sep} shows that if $q$ is sufficiently below the threshold $q_c=1-\frac{\pi^2}{6\log n}$ then w.h.p.\ there is a cut vertex which separates the graph into two $\Theta(n)$ vertex subpaths, and if $q$ is sufficiently above it then there does not. We say that $q_0$ is \textit{sharp threshold} for a graph property $\mathfrak{P}$ if for any $\eps>0$ w.h.p.\ $\cp(n,p) \notin \mathfrak{P}$ for any $p\leq q_0(1-\eps)$, and $\cp(n,r) \in \mathfrak{P}$ for any $r\geq q_0(1+\eps)$, see \cite{FriKal}. Theorem \ref{prop:sep} is quite precise as it determines the second order of the threshold up to a constant, showing that the property of having a $(1,\alpha )$-separator has a sharp threshold of width $\BO{\frac{\log\log n}{(\log n)^2}}$. Theorem \ref{prop:sep} is established by finding first and second moment thresholds for the property. Positive correlation between cuts suggests this result cannot be significantly improved using standard methods alone (see Remark \ref{rmk:sep}).

The diameter $\mathsf{diam}(G)$ of a graph $G$ is the length of the longest shortest path between any pair of vertices. Theorem \ref{treebanddiam} implies that $\diam(\cp(n,q)) =\BO{\log n}$ when $q$ is sufficiently close to $1$. On the other hand $ \diam(\cp(n,0))=n-1$ as $\mathcal{P}(n, 0)$ is a path; we show this holds (up to a constant) for any fixed $q<1$.

\begin{theorem} \label{diam} For any $0<\eps<1$, let  $0 < q \leq  1-\eps$. Then, there exists a constant $c:=c(\eps)>0$ such that for $n\geq 1/c$, we have $\Pr{ \mathsf{diam}\!\left(\mathcal{P}\!\left(n,q\right) \right)\geq cn}\leq n^{-1/10} .$
\end{theorem}
This result follows from bounds on the number of cut vertices used to prove Theorem \ref{prop:sep}.

The treewidth $\trw(G)$ of a graph $G$  is one less than the minimum size of the largest vertex subset (i.e.\ bag) in a tree decomposition of $G$, minimised over all such decompositions. The cutwidth $\cw(G)$ is the greatest number of edges crossing any real point under an injective function $f:V\rightarrow \mathbb{Z}$, minimised over all $f$. See Section \ref{sec:widthdef} for full definitions of these quantities. It is known that for any graph $G$ we have $\trw(G) \leq \cw(G)$, however there may be a multiplicative discrepancy of order up to $n$. We show there is at most only a constant factor discrepancy for $G=\cp(n,q)$ in certain ranges of $q$, and give bounds for all $q$ which are tight up to a $\log$ factor.

\begin{theorem}\label{thm:treepathcut}For any constant $\kappa>0$, let $0< q \leq  1 - \kappa\cdot \frac{(\log\log n)^2 }{\log n}$. Then, there exist constants $c_1,c_2>0$ such that   w.h.p.\ \[c_1\cdot \left(\sqrt{\frac{\log n}{\log (1/q)} } + 1\right)  \leq  \trw(\cp(n,q))\leq \cw(\cp(n,q)) \leq c_2\cdot\left( \sqrt{\frac{\log n}{\log (1/q)} } + 1 \right).\]
	Furthermore, if $ 1 - \frac{(\log\log n)^2 }{\log n} \leq q \leq 1$, then  w.h.p.\  
	\[ 10^{-5}\cdot  \min\left\{\frac{1}{1-q}, \; n \right\}\leq  \trw(\cp(n,q)) \leq \cw(\cp(n,q))\leq  5 \cdot \max \left\{  \frac{1}{1-q}\cdot \log\left(\frac{1}{1-q} \right) , \;  n \right\}.\]
\end{theorem}

Observe that if $q\rightarrow 1 $ then $\log(1/q) \approx 1-q$ and so when $q= 1-\Theta\left( (\log\log n)^2/\log n \right)$ we have $\sqrt{\log(n)/\log (1/q) } \approx  -\log(1-q) / (1-q) $. Thus, the two upper bounds on the cutwidth are equal up to constants for this range of $q$. Hence, for this range of $q$, the upper bound for the cutwidth given in the second equation is tight and the lower bound for treewidth is off by a multiplicative factor of order $\log\log n$. 
 
 The lower bounds on treewidth in Theorem \ref{thm:treepathcut} are proved by relating the treewidth to the occurrence of certain permutations as consecutive patterns in the underlying Mallows permutation. The upper bounds on cutwidth are proved by controlling the density of long edges.

\subsection{Further Related Work}\label{sec:related}
Many works have studied properties of a typical permutation sampled from the Mallows measure, in particular the longest increasing subsequence \cite{BhatSubSeq,Starr}, cycle structure \cite{PelCycle}, permutation pattern avoidance \cite{Patterns,PinskyPattern} and sets of consecutive elements \cite{pinsky}. Mallows permutations also arise as limit objects from stable matchings \cite{StabMatch} and have been studied in the contexts of statistical physics \cite{StarPhysic},  Markov chains \cite{DiaRam}, learning theory \cite{Braver} and finitely dependent processes \cite{FiniteDep}.

Random graphs have been heavily studied since their introduction in the late 1950s  \cite{FriezeKaronski}. A random graph arising from the Mallows distribution is introduced in \cite{BM}. In this model each edge corresponds to an inversion in the permutation, so it is different to our model. To our knowledge the model in \cite{BM} and the tangled path introduced here are the only random graph models based on Mallows permutation. However, some works have studied relations between random graph models and uniform permutations. In particular, in \cite{KW} it is proven that the union of two uniformly permuted cycles is contiguous to a random $4$-regular graph. Very recently \cite{chew2024hardness} used a union of two paths permuted by a uniform random permutation to get a lower bound on resolution refutations for SAT solvers. Also,  \cite{FriezeSpan} shows the union of two uniformly random trees on the same vertex set is an expander with high probability. 

There are also several papers which consider the graphs formed from (permuted) unions of graphs. In particular independent sets in the union of two Hamiltonian cycles \cite{Aharoni}, and the treewidth of a union of two graphs glued using a permutation \cite{viktor} and \cite[Chapter 5]{Qthesis}. The clique number of graph unions \cite{AharoniBCZ15,OthmanB18}, and unions of cliques have been studied \cite{MR2218735}. Unions of dense graphs with random graphs, namely `randomly perturbed graphs', have been studied intensely, see \cite{BohmanFM03} and citing papers. There is also a connection between graph unions and threshold graphs \cite{HoangT00}.  From the other direction, decompositions of graphs have been well studied \cite{Glock,Mont,Tutte}.

\subsection{Outline of the Paper}
In Section \ref{sec:notate} we cover some basic notation, definitions and concentration inequalities. In Section \ref{sec:mallowtangle} we state some known facts about the Mallows distribution, in particular defining the $q$-Mallows process, before introducing our notions of `flushing' and `local' events that are useful later in the paper. Section \ref{sec:q=1} establishes properties of the tangled path in the case where $q$ is close to one. The first result in this section shows that when $q=1$ (i.e.~a uniformly random permutation) the tangled path is an expander with high probability. Then, a bound on the probability of events under the $q$-Mallows measure by that of the $1$-Mallows measure is shown; this is useful later when bounding the treewidth. The beginning of Section \ref{sec:sep} focuses on  bounding the probability of flushing events. In the remainder of Section \ref{sec:sep}, these bounds are used to prove a sharp threshold for $(1,\alpha)$-separators and a linear bound on the diameter. Arguably the most interesting techniques and proofs are in Section \ref{sec:treewidth}. To prove a lower bound on the treewidth we use consecutive patterns in the Mallows permutations to find smaller tangled paths with a higher $q$ parameter as minors in the tangled path. To prove corresponding upper bounds we control the cutwidth by bounding the number of `long' edges created during the $q$-Mallows process. Finally, we conclude with some open problems in Section \ref{sec:probs}.

\section{Notation and Preliminaries} \label{sec:notate}

For a random variable $X$ and probability measure $\mu$ we use $X\sim \mu$ to say that $X$ has distribution $\mu$. Let $X,Y$ be random variables, then $X$ \textit{stochastically dominates} $Y$ if $ \Pr{ X \geq x } \geq   \Pr{ Y \geq x }$ for all real $x$, and we denote this by $X\succeq Y$. 
We let $\Omega$ denote the sample space and $\ce^c=\Omega\backslash \mathcal{E}$ to denote the complement of an event $\ce$. We also let $\mathbf{1}_{\ce}$:$\Omega\rightarrow \{0,1\}$ denote the \textit{indicator random variable} where $\mathbf{1}_{\ce}(\omega)=1$ if $\omega\in \ce$ and $\mathbf{1}_{\ce}(\omega)=0$ otherwise.  

Throughout $\log $ denotes the natural logarithm (base $\mathrm{e}$) we will also use the $\ln$ notation for this natural logarithm sometimes for reader recognition. We note that for any real $x>-1$, 
\begin{equation}\label{logbdd} \frac{x}{1+x}\leq \log(1+x) \leq x.  \end{equation} 
  
We use standard asymptotic (big-$\mathcal{O}$ etc.) notation consistent with \cite{FriezeKaronski}. A sequence of events $(\mathcal{E}_n)$ holds \textit{with high probability} (w.h.p.) if $\lim_{n\rightarrow \infty }\Pr{\mathcal{E}_n}=1$. We use $:=$ to indicate suppressed dependency, e.g.\ $C:=C(c)$ if the constant $C$ depends on $c$.

\subsection{Expansion, Width Measures and Graph Minors} \label{sec:widthdef}

Let $S\subseteq V$ and define the \textit{edge boundary} $ \partial (S)=\{uv\in E(G) : u\in S,v\in V(G)\setminus S\}$ of $S$ to be the set of edges with one endpoint in $S$ and the other outside $S$. Similarly we let the \textit{outer vertex boundary} $\bar{N}(S) = \{ v\in V\backslash S : \text{ there exists }u\in S, uv\in E \}$  of $S$ to be the set of vertices outside $S$ which share an edge with a vertex in $S$. We then define 
\begin{equation*}\phi(G)=\min_{0<|S|\leq n/2} \frac{|\partial(S)|}{|S|} \qquad \text{and}\qquad \iso(G)=\min_{0<|S|\leq n/2} \frac{|\bar{N}(S)|}{|S|}  \end{equation*} to be the \textit{edge} and \textit{vertex isoperimetric numbers} respectively. For any graph $G$ we have 
\begin{equation}\label{eq:expansionrels}
\iso(G) \leq \phi(G)\leq \max_{v\in V} |\bar{N}(v) | \cdot \iso(G).
\end{equation} We say that a graph sequence $G_n$ is an \textit{edge} (resp. \textit{vertex}) \textit{expander sequence} if there exists some fixed $\alpha>0$ such that $ \phi(G_n)\geq \alpha$ (resp. $\iso(G_n)\geq \alpha$) for all $n$ suitably large, see \cite{Exp}.

Let $\tfrac12 \le \alpha < 1$, $s\geq 0$ an integer, and $G=(V,E)$ a graph. A subset $S\subset V$ is said to be an $(s,\alpha)$\textit{-separator} of $G$ \cite{BotBand,Kloks}, if there exists subsets $A,B \subset V$ such that
\begin{itemize}
	\item $V = A \cup B \cup S$ and $A, B, S$ are pairwise disjoint, 
	\item $|S| \leq s$, $|A|, |B| \leq \alpha |V|$, and
	\item $ \{ab\in E : a\in A, b\in B \} =\emptyset$.
\end{itemize}

	A \textit{tree decomposition} of a graph $G=(V,E)$ is a pair $\left(\{X_i: i\in
	I\},\right.$ $\left.T=(I,F)\right)$ where $\{X_i: i \in I\}$ is a family of
	subsets (or `bags') $X_i\subseteq V$ and $T = (I,F)$ is a tree such that
	\begin{itemize}
		\item $\bigcup_{i \in I} X_i = V$,
		\item for every edge $vw \in E$ there exists $i \in I$ with $\{v,w\}
		\subseteq X_i$,
		\item for every $i,j,k \in I$, if $j$ lies on the path
		from $i$ to $k$ in $T$, then $X_i \cap X_k \subseteq X_j$.  
	\end{itemize}
	The \textit{width} of $\left(\{X_i:i \in I\},T=(I,F)\right)$ is defined as
	$\max_{i \in I} |X_i| -1$. The \textit{tree\-width} $\trw(G)$ of $G$ is	the minimum width of any tree decomposition of $G$. Thus, for any graph $G$, $\trw(G)\leq n-1$.
	
For an injective function $f:V(G)\rightarrow \mathbb{Z}$ we define the \textit{cutwidth} \cite{ChungCut} of $G$ by \begin{equation}\label{eq:cwdef} \cw(G)= \min_{f:V\rightarrow \mathbb{Z}, \text{ injective}} \; \max_{x\in \mathbb{R}}\;\left| \left\{ ij \in E(G) : f(i)\leq x <f(j)\right\}\right|;   \end{equation} this is the maximum number of edges crossing a real point when the vertices are arranged in a line according to $f$, minimised over all injections $f:V\rightarrow \mathbb{Z}$.  By \cite[Proposition 1]{GroheM09} and \cite{Bod88}, \begin{equation}\label{trw-sep}	\lfloor \iso(G) \cdot n/4 \rfloor \leq   \trw(G) \leq  \cw(G)\leq |E(G)|. \end{equation} 

A graph $H$ is called a \textit{minor} of the graph $G$ if $H$ can be formed from $G$ by deleting edges and vertices and by contracting edges. 

\begin{lemma}[Folklore, see \cite{Kloks}]\label{sub-div} If $H$ is a minor of $G$ then $ \trw(H)\leq  \trw(G)$.
\end{lemma}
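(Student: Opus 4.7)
The plan is to prove that treewidth cannot increase under any of the three elementary minor operations—edge deletion, vertex deletion, and edge contraction—and then observe that any minor $H$ of $G$ is obtained from $G$ by a finite sequence of such operations, so the bound follows by induction on the length of that sequence.

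First I would fix a tree decomposition $(\{X_i : i \in I\}, T=(I,F))$ of $G$ of width $\trw(G)$. For edge deletion, the same family $(\{X_i\},T)$ is trivially still a tree decomposition of $G - e$, since the three defining axioms only become easier to satisfy when edges are removed; the width is unchanged. For vertex deletion, I would remove the deleted vertex $v$ from every bag that contains it, obtaining bags $X_i' = X_i \setminus \{v\}$. The union condition and the edge-coverage condition still hold on $V \setminus \{v\}$, and the interpolation axiom (that the indices containing any fixed vertex $u \neq v$ form a subtree of $T$) is preserved because restricting a subtree-indexed family to $V \setminus \{v\}$ can only remove one vertex from each bag. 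The width cannot increase.

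The one case that requires a small check is edge contraction. Suppose $uv \in E(G)$ is contracted to a new vertex $w$ in the resulting graph $G/uv$. I would define new bags $X_i'$ by replacing every occurrence of $u$ or $v$ in $X_i$ with the merged vertex $w$, that is $X_i' = (X_i \setminus \{u,v\}) \cup \{w\}$ if $X_i$ meets $\{u,v\}$ and $X_i' = X_i$ otherwise. The union axiom and edge-coverage axiom are immediate (neighbours of $u$ or $v$ are now neighbours of $w$, and covered via bags that previously covered those incidences). The interpolation axiom for $w$ follows because the set of indices $i$ with $w \in X_i'$ equals the union of the subtrees $\{i : u \in X_i\}$ and $\{i : v \in X_i\}$, which is again a subtree of $T$: both subtrees are connected and they intersect, since some bag must contain both endpoints of the edge $uv$. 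Each new bag has size at most the old bag's size, so the width does not increase.

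I expect no real obstacle here since the result is folklore; the only subtlety worth flagging explicitly is the connectivity argument in the contraction step, which relies precisely on the fact that the two subtrees indexing bags containing $u$ and $v$ must meet at a bag containing the edge $uv$. Having verified all three operations, the lemma follows by applying them one at a time along a sequence that realises $H$ as a minor of $G$, concluding that $\trw(H) \leq \trw(G)$.
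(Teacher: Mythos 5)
Your proof is correct and is exactly the standard argument for this folklore fact: the paper itself gives no proof, simply citing \cite{Kloks}, and your case analysis (edge deletion, vertex deletion, contraction with the two-subtrees-meet-at-a-bag-containing-$uv$ observation) is the textbook route, with the one genuinely delicate point handled properly. Nothing to add.
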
 
\subsection{Concentration Inequalities}

Let $\geo{p}$ denote the \textit{geometric distribution} with success probability $p$. That is, if a random variable $X\sim \geo{p}$ then $\Pr{X=k} = (1-p)^{k-1}p  $ for any integer $k\geq 1$. 
\begin{lemma}[{\cite[Theorem 2.3]{JansonTail}}] \label{lem:jansontail}For any $n\geq 1$ and $p_1,\dots  , p_n \in (0,1]$, let $X=\sum_{i=1}^nX_i$ where $X_i \sim \geo{p_i}$. Let $p_*=\min_{i\in [n]}p_i$ and $\mu=\Ex{X}=\sum_{i=1}^n\frac{1}{p_i}$. Then for any $\lambda\geq 1$, \[\Pr{X \geq \lambda \mu  } \leq \lambda^{-1}(1-p_*)^{\mu \left(\lambda -1- \log \lambda\right)}.\]
\end{lemma}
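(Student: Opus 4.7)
The plan is a standard Chernoff-style exponential-moment / Markov argument, tailored to the geometric distribution. By independence and Markov applied to $e^{tX}$, for any $t>0$ small enough that $(1-p_i)e^t<1$ for all $i$ (equivalently $t<-\log(1-p_*)$), one has
\[
\Pr{X\geq\lambda\mu}\;\leq\;e^{-t\lambda\mu}\prod_{i=1}^n\Ex{e^{tX_i}}\;=\;e^{-t\lambda\mu}\prod_{i=1}^n\frac{p_i e^{t}}{1-(1-p_i)e^{t}},
\]
using the closed form of the moment generating function of a $\geo{p_i}$ variable. Thus the whole proof reduces to bounding the product on the right by $\lambda^{-1}(1-p_*)^{\mu(\lambda-1-\log\lambda)}$ under an appropriate choice of $t$.

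First I would carry out the single-variable case in order to identify the right parametrisation. For one $Y\sim\geo{p}$ with mean $1/p$, optimising $t$ in the Chernoff bound on $\Pr{Y\geq\lambda/p}$ gives the choice $(1-p)e^{t}=1-p/\lambda$, which turns the exponent into exactly $(\lambda-1-\log\lambda)/p$ up to the prefactor $\lambda^{-1}$ after using the identity $1-(1-p)e^t=p/\lambda$. This suggests the analogous substitution in the multi-variable case: pick $t$ so that $(1-p_*)e^{t}=1-p_*/\lambda$, i.e. $e^t=(\lambda-p_*)/(\lambda(1-p_*))$, which keeps $t$ within the allowed range for every $i$ because $1-p_i\le 1-p_*$.

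The main obstacle is then algebraic: one has to show that with this choice of $t$, the product over $i$ collapses so that only $\mu=\sum 1/p_i$ and $p_*$ survive. The approach I would take is to prove that, subject to the constraint $\sum 1/p_i=\mu$ and the bounds $p_i\ge p_*$, the product $\prod p_ie^t/(1-(1-p_i)e^t)$ is maximised by taking all $p_i$ equal to $p_*$. Concretely, I would check that $f(p):=\log\bigl(pe^t/(1-(1-p)e^t)\bigr)$, viewed as a function of $1/p$ (the per-term contribution to $\mu$), is convex and decreasing in $p$ on the relevant range, so that a swapping / Lagrange argument reduces to a single $p_*$ with total mass $\mu$. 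Feeding this reduction back and using $\log(1+x)\le x$ to bound any residual logarithmic terms produces the displayed $\lambda^{-1}(1-p_*)^{\mu(\lambda-1-\log\lambda)}$.

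The hardest step is the convexity/extremality argument in the previous paragraph, since both the $p_i$-dependence of the denominator and the constraint on $\mu$ have to be tracked simultaneously; a Lagrange-multiplier or two-variable exchange computation should suffice, and indeed one can alternatively avoid extremality entirely by verifying the inequality $\log(p_ie^t/(1-(1-p_i)e^t))\le (1/p_i)\log(1/(1-(1-p_*)e^t))+\text{correction}$ for each $i$ term-by-term, but the book-keeping is essentially the same. Once the reduction is done, matching the coefficient of $(1-p_*)$ to $\mu(\lambda-1-\log\lambda)$ and extracting the $\lambda^{-1}$ factor is just bookkeeping from the single-variable calculation.
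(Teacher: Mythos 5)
First, a contextual note: the paper does not prove this lemma at all — it is quoted verbatim from Janson's paper (Theorem 2.3 there) — so there is no internal proof to compare against, and your attempt has to be judged on its own terms.

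On its own terms there is a genuine gap, and it sits exactly at the single-variable calibration on which the whole plan rests. For $Y\sim\geo{p}$ the optimal choice in the exponential Markov bound is indeed $(1-p)e^{t}=1-p/\lambda$, but substituting it back does \emph{not} produce the claimed exponent: using $1-(1-p)e^{t}=p/\lambda$ the optimized bound is
\[
e^{-t\lambda/p}\cdot\frac{pe^{t}}{1-(1-p)e^{t}}
=\lambda\, e^{-t(\lambda/p-1)}
=\lambda\left(\frac{1-p}{1-p/\lambda}\right)^{\lambda/p-1},
\]
with prefactor $\lambda$ rather than $\lambda^{-1}$, and this discrepancy cannot be absorbed: the best possible Chernoff bound is strictly \emph{weaker} than the target $\lambda^{-1}(1-p)^{(\lambda-1-\log\lambda)/p}$. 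Concretely, take $n=1$, $p=1/2$, $\lambda=2$ (so $\mu=2$): the infimum over all admissible $t$ of $e^{-t\lambda\mu}\,\Ex{e^{tX}}$ equals $16/27\approx 0.59$, whereas the right-hand side of the lemma is $\tfrac12\,(1/2)^{2(1-\log 2)}\approx 0.33$. Hence no choice of $t$ in $\Pr{X\ge\lambda\mu}\le e^{-t\lambda\mu}\prod_i\Ex{e^{tX_i}}$ can prove the lemma even for $n=1$; in general the plain Chernoff bound for geometric tails overshoots the stated bound by a factor of order $\lambda$. In consequence, the convexity/extremality reduction to the all-$p_*$ case — which is actually the unproblematic part of your plan, since $\log\Ex{e^{tX_i}}=-\log\bigl(1-(1-e^{-t})/p_i\bigr)$ is convex in $1/p_i$ and vanishes at $0$, so each factor can be compared term by term — cannot rescue the argument, because the endpoint it reduces to is precisely the false single-variable identity. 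The factor $\lambda^{-1}$ in Janson's theorem requires an ingredient beyond Markov applied to $e^{tX}$ (Janson obtains his sharper bounds by arguments that use more than the moment generating function, e.g.\ exploiting the exact form of the geometric/exponential tails), so as written the proposal cannot be completed.
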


Let $\ber{p}$ denote the Bernoulli distribution with success probability $0\leq p\leq 1$. If $X\sim \ber{p}$ then $\Pr{X=1} = p$ and $\Pr{X=0} = 1- p$. 
 	\begin{lemma}[{\cite[Theorem 4.4]{PandC}}]\label{Chertail}
Let $ n\geq 1$ be an integer, $X=\sum_{i=1}^n X_i$ where $X_i$ are independent Bernoulli random variables, and $\mu=\Ex{X}$. Then, for any real $\delta>0$ we have	\[\Pr{X\geq (1+\delta)\mu} \leq \left(\frac{\mathrm{e}^\delta }{(1+\delta)^{1+\delta} } \right)^{\mu}.\]	
\end{lemma}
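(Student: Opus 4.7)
The plan is to use the exponential Markov/Chernoff method. For any real $t>0$, applying Markov's inequality to the monotonically increasing function $x\mapsto \e^{tx}$ gives
\[
\Pr{X\geq (1+\delta)\mu} \;=\; \Pr{\e^{tX}\geq \e^{t(1+\delta)\mu}} \;\leq\; \e^{-t(1+\delta)\mu}\,\Ex{\e^{tX}}.
\]
By independence of the $X_i$, the moment generating function factorises as $\Ex{\e^{tX}} = \prod_{i=1}^n \Ex{\e^{tX_i}}$, and each factor is explicitly computable since $X_i\sim\ber{p_i}$:
\[
\Ex{\e^{tX_i}} \;=\; 1 + p_i(\e^t-1) \;\leq\; \exp\!\bigl(p_i(\e^t-1)\bigr),
\]
using the inequality $1+x\leq \e^x$. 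Multiplying these bounds over $i$ and recalling that $\mu=\sum_i p_i$ yields $\Ex{\e^{tX}} \leq \exp\!\bigl(\mu(\e^t-1)\bigr)$, so for every $t>0$,
\[
\Pr{X\geq (1+\delta)\mu} \;\leq\; \exp\!\bigl(\mu(\e^t-1) - t(1+\delta)\mu\bigr).
\]

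Next I would optimise the free parameter $t$. Differentiating the exponent in $t$ and setting the derivative to zero gives the optimal choice $t=\log(1+\delta)$, which is positive since $\delta>0$. Substituting back yields $\mu(\e^t-1) = \mu\delta$ and $t(1+\delta)\mu = (1+\delta)\mu\log(1+\delta)$, so the exponent becomes
\[
\mu\bigl(\delta - (1+\delta)\log(1+\delta)\bigr),
\]
which rearranges to $\log\!\bigl((\e^\delta/(1+\delta)^{1+\delta})^\mu\bigr)$. Exponentiating gives the claimed bound
\[
\Pr{X\geq (1+\delta)\mu}\leq \left(\frac{\e^\delta}{(1+\delta)^{1+\delta}}\right)^{\mu}.
\]

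The only mildly delicate point is the optimisation step: one must verify that $t=\log(1+\delta)$ is in the admissible range $t>0$ (it is, as $\delta>0$) and that this is genuinely the minimiser (the exponent is strictly convex in $t$). Everything else is a direct computation, and no additional assumptions on the $p_i$ beyond $p_i\in[0,1]$ are required. This recovers the standard multiplicative Chernoff bound exactly as stated.
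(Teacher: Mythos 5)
Your proof is correct and is the standard moment-generating-function Chernoff argument, which is exactly the proof of the cited result (Theorem 4.4 in Mitzenmacher--Upfal) that the paper invokes without reproving: bound $\Ex{\e^{tX}}$ via independence and $1+x\leq \e^x$, then optimise at $t=\log(1+\delta)$. Nothing further is needed.
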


\section{The \texorpdfstring{$q$}{q}-Mallows Process \&  Technical Tools for Tangled Paths}\label{sec:mallowtangle}

 In this section we describe a random process which generates a Mallows permutation. It is easier to prove results via this process rather than with $\mu_{n,q}$ directly as the process is driven by independent random input variables. We also introduce a special class of events related to the inputs of this process, and prove a concentration result for sums of indicators of these events. 
\subsection{The \texorpdfstring{$q$}{q}-Mallows Process}\label{sec:Mallowsproc}

In this section we introduce the \emph{$q$-Mallows process} \cite{BhatSubSeq}, a permutation-valued
stochastic process $(r_n)_{n \ge 1}$, where each $r_n \in S_n$. In what follows assume $q > 0$, and $q \neq 1$ (unless specified otherwise). The process is initialized by setting $r_1$ to be the (only) permutation on one element. The process iteratively constructs $r_{n}$ from $r_{n-1}$ and an independent random variable $v_n\sim \nu_{n,q}$, where $\nu_{n,q}$ is the \textit{truncated geometric distribution} given by 
\begin{equation}\label{eq:MallowsDist}
\nu_{n,q}(j)=  \frac{q^{j-1}}{1+q+\cdots+q^{n-1}} =
\frac{(1-q)q^{j-1}}{1-q^n}
\quad(1\le j\le n).
\end{equation} For the case $q=1$ we define $\nu_{n,1}(j) = 1/n$ for all $1\leq j\leq n$. The random sequence $(r_{n})_{n\geq 1}$ can now be defined inductively by sampling  $v_n \sim \nu_{n,q}$ then setting 

\begin{equation}\label{eq:MallowsProc-1}
r_{n}(i) =
\begin{cases}
r_{n-1}(i)& i < v_n\\
n& i=v_n\\
r_{n-1}(i-1) & i > v_n 
\end{cases} \quad (1 \le i \le n). \end{equation}
To visualise this process: start with an empty bookshelf and at each time $i\geq 1$ insert the book with label $i$ at position $v_i - 1$ then shift the remaining books one position to the right.
  
See Figure \ref{fig:Mallowspathexample} for an example of the $q$-Mallows process $(r_n)$ and the resulting tangled path. There is (at least) one other process which generates Mallows permutations, see \cite[Section 2]{BhatSubSeq} for more details. We use $(r_n)$ as it is convenient for our proof methods, and often describe events in terms of the random sequence $(v_n)$  generating the process $(r_n)$. The following lemma tells us that the reversed output permutations $r_n^R$ has the Mallows distribution.

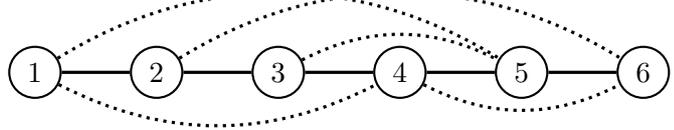
\begin{figure}  
	\center\begin{tikzpicture}[xscale=0.8,yscale=0.7,knoten/.style={thick,circle,draw=black,minimum size=.1cm,fill=white},edge/.style={black,very thick},oedge/.style={dotted,very thick}, bedge/.style={dashed,very thick}]


	\foreach \x in {1,...,6}
	\node[knoten] (\x) at (2*\x,1) {$\x$};
	\draw[edge] (1) to (2);
	\draw[edge] (2) to (3);
	\draw[edge] (3) to (4);
	\draw[edge] (4) to (5);
	\draw[edge] (5) to (6); 
	\draw[oedge] (3) to[out=30,in=150 ] (5); 
	\draw[oedge] (1) to[out=35,in=145 ] (5); 
	\draw[oedge] (1) to[out=-30,in=-150 ] (4);
	\draw[oedge] (4) to[out=-30,in=-150 ] (6);
		\draw[oedge] (2) to[out=35,in=145 ] (6);
			\draw (2.7,2.3) node[anchor=south]{$\layer\left(P_6 ,r_6(P_{6})\right)$};


\draw (-4,-1) node[anchor=south]{$\displaystyle{
\begin{array}{ccl}
n & v_n & r_n\\
1 & 1  & 1\\
2   & 2 & 12\\
3 & 1 &312\\
4 & 3  & 3142\\
5 & 2  & 35142\\
6  & 5   & 351462\\
\end{array}
}$};

	\end{tikzpicture}

	\caption{\label{fig:Mallowspathexample}The table on the left gives the sequences of permutations  $(r_n)$ generated by the sequence $(v_n)$ for $i=1, \dots, 6$. On the right we have a tangled path generated by $r_6$, where the edges of $r_6(P_{6})$ are dotted.}
\end{figure}

\begin{lemma}[{\cite[Corollary 2.3]{BhatSubSeq}}]\label{MallowRelations}Let $q>0$ and $(r_{n})$ be the $q$-Mallows process \eqref{eq:MallowsProc-1}. Then the permutation $\sigma\in S_n$ given by $\sigma(i) = r_{n}(n+1-i)$, for $1\leq i\leq n$, has distribution $\mu_{n,q}$.
\end{lemma}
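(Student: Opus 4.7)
The plan is to directly compute the joint distribution of the sequence $(v_1,\dots,v_n)$ that produces a given target permutation, and match the result with $\mu_{n,q}$. Since the $v_i$ are independent with $v_i \sim \nu_{i,q}$, we have
\[
\Pr{v_1=a_1,\dots,v_n=a_n} = \prod_{i=1}^{n}\frac{q^{a_i-1}}{1+q+\cdots+q^{i-1}} = \frac{q^{\sum_{i=1}^n(a_i-1)}}{Z_{n,q}}.
\]
Moreover, the map $(a_1,\dots,a_n)\mapsto r_n$ defined by the recursion \eqref{eq:MallowsProc-1} is a bijection between $[1]\times[2]\times\cdots\times[n]$ and $S_n$, because once we know $r_n$ we can recover $v_n$ as the position of $n$ in $r_n$, then strip $n$ out and recurse. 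So it suffices to evaluate the exponent of $q$ in terms of the combinatorics of $r_n$.

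The key combinatorial identity is that the total number of inversions of $r_n$ is exactly
\[
\inv(r_n) = \sum_{i=1}^n (i-v_i).
\]
This can be shown by induction on $n$: inserting the new maximum element $n$ at position $v_n$ in $r_{n-1}$ creates exactly $n-v_n$ new inversions (with the $n-v_n$ smaller elements that end up to its right) and destroys none, so $\inv(r_n) = \inv(r_{n-1}) + (n-v_n)$. Consequently $\sum(v_i-1) = \binom{n}{2} - \inv(r_n)$, and substituting into the formula above gives
\[
\Pr{r_n=\rho} = \frac{q^{\binom{n}{2}-\inv(\rho)}}{Z_{n,q}}\qquad\text{for every }\rho\in S_n.
\]

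It remains to pass from $r_n$ to its reversal $\sigma$. The reversal map $\rho\mapsto \rho^R$ (with $\rho^R(i)=\rho(n+1-i)$) is a bijection of $S_n$ satisfying the elementary identity
\[
\inv(\rho^R) = \binom{n}{2} - \inv(\rho),
\]
since reversing toggles every unordered pair between an inversion and a non-inversion. Hence if $\sigma = r_n^R$ then
\[
\Pr{\sigma=\tau} = \Pr{r_n=\tau^R} = \frac{q^{\binom{n}{2}-\inv(\tau^R)}}{Z_{n,q}} = \frac{q^{\inv(\tau)}}{Z_{n,q}} = \mu_{n,q}(\tau),
\]
completing the proof. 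The only delicate step is the inversion count for the insertion recursion; once that is nailed down via induction, everything else is a bookkeeping exercise combining the product form of $\Pr{v_1,\dots,v_n}$ with the involution given by reversal.
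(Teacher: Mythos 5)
Your proof is correct. Note that the paper itself offers no proof of this lemma: it is quoted verbatim from Bhatnagar and Peled (Corollary 2.3 of the cited reference), so there is no in-paper argument to compare against. Your derivation is the standard one and is complete: the sequence-to-permutation map is indeed a bijection between $[1]\times[2]\times\cdots\times[n]$ and $S_n$, the insertion of the running maximum at position $v_i$ creates exactly $i-v_i$ new inversions and destroys none (giving $\inv(r_n)=\sum_{i}(i-v_i)$ and hence $\Pr{r_n=\rho}=q^{\binom{n}{2}-\inv(\rho)}/Z_{n,q}$), and the reversal identity $\inv(\rho^R)=\binom{n}{2}-\inv(\rho)$ converts this into the Mallows measure for $\sigma=r_n^R$; all of this is valid for every $q>0$, as required. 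In effect you have supplied a self-contained proof of a result the paper imports as a black box, which is a perfectly legitimate (and arguably the only natural) route.
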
  

 Recall that $\cp(n,q)$ is the tangled path on $n$ vertices with parameter $q$. Any edge in the tangled path $\cp(n,q)$ must belong to one of the two paths $P_n$ or $\sigma(P_n)$, thus \[E(\mathcal{P}(n,q)) = \{ i(i+1); 1\leq i < n \}\cup \{ \sigma(i)\sigma(i+1)\,: \, 1\leq i < n \}.\] However given a permutation $\sigma$, for any $1\leq i\leq n-1$ there exists $1\leq j\leq n-1$ such that $\{\sigma(i),\sigma(i+1) \} = \{\sigma^R(j),\sigma^R(j+1) \} $. It follows that \begin{equation}\label{reversed}\mathcal{P}(n,q) \sim \layer(P_n, \sigma(P)) \sim \layer(P_n, \sigma^R(P_{n})).\end{equation} The equation above shows that it does not matter if we use a $(n,q)$-Mallows permutation or its reverse (which is a $(n,1/q)$-Mallows permutation). This justifies our use of the (unreversed) permutation $r_n$ to generate $\cp(n,q)$ and our restriction of $q$ to the range $0\leq q\leq 1$. 

We now give some useful bounds on densities of the distribution $\nu_{n,q}$. Observe that if $v_k\sim\nu_{k,q}$ then for any integer $1\leq x\leq k$, and $0<q<1$, by \eqref{eq:MallowsDist} we have \begin{equation}\label{eq:nuupperbdd}\Pr{v_k\geq  x}   = \frac{(1-q)\left(q^{x-1}+ \cdots + q^{k-1}\right)}{1- q^{k}}=  \frac{q^{x-1} -q^{k}}{1-q^k}=q^{x-1}\cdot \frac{1 -q^{k-x+1}}{1-q^k}\leq q^{x-1} .\end{equation} 
From this we can also obtain

\begin{equation}\label{eq:Probv_klowerthani}\Pr{v_k\leq x} =1- \Pr{v_k> x} = 1-  \frac{q^x - q^k}{1-q^k}= \frac{1 - q^x}{1-q^k}\geq 1- q^x. \end{equation}

Bhatnagar \& Peled \cite{BhatSubSeq} proved the following tail bounds on the displacement of an element in under the action of a Mallows permutation. 

\begin{theorem}[{\cite[Theorem 1.1]{BhatSubSeq}}]\label{thm:displacement} Let $1\le i\le n$ and $t\ge 1$ be integers, and $0<q<1$ be real. Then, if $\sigma \sim
	\mu_{n,q}$ we have $\Pr{|\sigma(i)-i|\ge t}\le 2q^t.$
\end{theorem}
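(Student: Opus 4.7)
The strategy is to bound the upper and lower tails of $\sigma(i)-i$ separately, each by $q^t$, and combine by a union bound. Rather than work with the insertion process $(r_n)$ directly, I would use an alternative sequential construction of $\mu_{n,q}$ that is tailor-made for controlling displacement: draw independent $v_1,\ldots,v_n$ with $v_i \sim \nu_{n-i+1,q}$, and let $\sigma(i)$ be the $v_i$-th smallest element of $[n]\setminus\{\sigma(1),\ldots,\sigma(i-1)\}$. A short verification confirms this produces $\mu_{n,q}$: at step $i$ the placement of $\sigma(i)$ contributes exactly $v_i-1$ inversions (one for each remaining value smaller than $\sigma(i)$, since each must be placed at some position $j>i$), so $\inv(\sigma)=\sum_{i=1}^n (v_i-1)$, and the product $\prod_i \nu_{n-i+1,q}(v_i)$ collapses to $q^{\inv(\sigma)}/Z_{n,q}$ using the formula for $Z_{n,q}$ stated in the excerpt.

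For the upper tail, observe that $\sigma(i)$ is the $v_i$-th smallest element of a subset of $[n]$ of size $n-i+1$; since only $i-1$ values are excluded, the $v_i$-th smallest is at most $v_i+(i-1)$, giving $\sigma(i)-i \leq v_i-1$. The truncated geometric tail bound \eqref{eq:nuupperbdd} then yields $\Pr{\sigma(i)-i \geq t} \leq \Pr{v_i \geq t+1}\leq q^t$. For the lower tail I would exploit the symmetry $\tau(i):=n+1-\sigma(n+1-i)$, which is also $\mu_{n,q}$-distributed: the map $\sigma\mapsto\tau$ sends each inversion $(i,j)$ of $\tau$ to the pair $(n+1-j,\,n+1-i)$, which is an inversion of $\sigma$, so inversion counts are preserved. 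Applying the upper-tail bound to $\tau$ and re-indexing gives $\Pr{\sigma(i)-i \leq -t} = \Pr{\tau(n+1-i)-(n+1-i)\geq t}\leq q^t$, and a union bound completes the proof.

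The main obstacle is essentially bookkeeping: the position-by-position construction above is not literally the insertion process $(r_n)$ described in Section \ref{sec:Mallowsproc}, so justifying it requires a short computation matching the resulting product of densities against the formula for $\mu_{n,q}$. An alternative route that avoids introducing a new construction is to work with $(r_n)$ directly and use that $\sigma^{-1}\sim\mu_{n,q}$ (from the inversion-preserving involution $\sigma\leftrightarrow\sigma^{-1}$), but tracking how each inserted value's position drifts under later insertions is noticeably more intricate, so the sequential construction gives the cleanest argument.
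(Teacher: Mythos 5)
Your proof is correct, but note that there is no internal proof to compare it against: the paper imports this statement verbatim from \cite[Theorem 1.1]{BhatSubSeq} and never proves it. Checking your argument on its own terms, each step holds. The sequential construction (pick $\sigma(i)$ as the $v_i$-th smallest unused value, with $v_i\sim\nu_{n-i+1,q}$ independent) does generate $\mu_{n,q}$: the placement of $\sigma(i)$ creates exactly $v_i-1$ inversions, the correspondence $(v_1,\dots,v_n)\leftrightarrow\sigma$ is a bijection, and the product of densities is $(1-q)^nq^{\sum_i(v_i-1)}/\prod_{k=1}^n(1-q^k)=q^{\inv(\sigma)}/Z_{n,q}$. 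This is essentially the ``other process'' alluded to in Section \ref{sec:Mallowsproc} of the paper (Section 2 of \cite{BhatSubSeq}), so your detour through it is natural rather than exotic. The deterministic bound $\sigma(i)\le v_i+i-1$ is right (at most $i-1$ values below $v_i+i-1$ can be missing from the remaining set), and combined with \eqref{eq:nuupperbdd} it gives $\Pr{\sigma(i)-i\ge t}\le\Pr{v_i\ge t+1}\le q^t$, with the edge case $t+1>n-i+1$ trivial. The reverse--complement map $\tau(i)=n+1-\sigma(n+1-i)$ is an involution on $S_n$ sending inversions of $\tau$ bijectively to inversions of $\sigma$ via $(i,j)\mapsto(n+1-j,n+1-i)$, so $\tau\sim\mu_{n,q}$ (unlike reversal alone, which flips $q$ to $1/q$), and the identity $\tau(n+1-i)-(n+1-i)=i-\sigma(i)$ converts the lower tail into an upper-tail instance; the union bound then gives $2q^t$. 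So your write-up is a correct, self-contained derivation using only facts stated in the paper (the formula for $Z_{n,q}$ and the tail bound \eqref{eq:nuupperbdd}); what it buys over the paper is a proof where the paper has only a citation, at the cost of introducing and verifying the alternative construction, which you rightly identify as the only real bookkeeping burden.
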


 \subsection{Local Events and Concentration} \label{sec:klocal}
 
As the $q$-Mallows process $(r_i)$ is generated by a sequence $(v_i)$ of independent random variables,  it is convenient to describe events in terms of $(v_i)$. An example of such an event is the flush \eqref{eq:flushingevent} given by $\mathcal{F}_k= \left\{ \text{for each }i > k\text{ we have }v_i\leq i-k \right\}$. This event will be introduced formally in Section \ref{sec:flushes} and is closely related to the existence of cut vertices, see Section \ref{sec:sep}. In this section we are interested in events defined in terms of the sequence $(v_i)$ which, unlike the flush, become pairwise (asymptotically) independent when the indices of the events are suitably well spaced. To begin we describe a class of events satisfying this criterion; we call these local events. We then show in Lemma \ref{secmom} that sums of indicators of local events concentrate.

 For integers  $n\geq 1$, $i\in [n]$, and $ \ell\geq 1$, we say that an event is \textit{$\ell$-local to $i$} if the event is completely determined by the values of $v_j$ in the range $j \in [i-\ell,i+\ell]$. As an example the event $(\{ v_i\leq 5\}\cup \{v_{i+1} =2\})\cap \{v_{i-2} =3\}  $ is $2$-local to $i$. We say a random variable $X_i$ is $\ell$-local to $i$ if it is a weighted indicator random variable of an event $\ce_{i}$ which is $\ell$-local to $i$, that is $X_i= c_i\cdot \mathbf{1}_{\ce_i}$ where $c_i\geq 0 $ are real numbers. We prove several results by showing that certain small subgraphs are present. Often the number of these subgraphs can be expressed as sums of local random variables, the following lemma, based on Chebyshev's inequality, is then useful.

  \begin{lemma}\label{secmom} Let $n\geq 1$, $\ell \geq 1$ be integers and $S\subseteq [n]$ non-empty. Let $X=\sum_{i\in S} X_i$ where each $X_i$ is a non-negative random variable $\ell$-local to $i$ and $M=\max_{i \in S}\Ex{X_i^2}$. Then, for any  $x\geq 0$,   
 	\[\Pr{\left| X - \Ex{X}\right|> x \cdot \sqrt{M\cdot |S|\cdot \ell} } \leq 5/x^2.  \]  \end{lemma}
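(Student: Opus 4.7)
The plan is to apply Chebyshev's inequality, so everything reduces to controlling $\mathrm{Var}(X)$. First I would expand
\[
\mathrm{Var}(X) = \sum_{i,j\in S} \mathrm{Cov}(X_i,X_j).
\]
The key structural observation is that if $|i-j|>2\ell$, then the index windows $[i-\ell,i+\ell]$ and $[j-\ell,j+\ell]$ are disjoint, so the $\ell$-locality of $X_i$ and $X_j$, combined with mutual independence of the driving variables $(v_k)_{k\geq 1}$, implies that $X_i$ and $X_j$ are independent and hence $\mathrm{Cov}(X_i,X_j)=0$. For the remaining ``nearby'' pairs with $|i-j|\leq 2\ell$, Cauchy--Schwarz gives
\[
|\mathrm{Cov}(X_i,X_j)| \leq \sqrt{\mathrm{Var}(X_i)\,\mathrm{Var}(X_j)} \leq \sqrt{\Ex{X_i^2}\Ex{X_j^2}} \leq M.
\]

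Next I would count the nearby pairs: for each fixed $i\in S$, the number of indices $j\in S$ with $|i-j|\leq 2\ell$ is at most $4\ell+1 \leq 5\ell$ (using $\ell\geq 1$). Summing the covariance bound over all such pairs yields
\[
\mathrm{Var}(X) \;\leq\; 5\,M\,|S|\,\ell.
\]
Chebyshev's inequality with threshold $x\sqrt{M|S|\ell}$ then gives
\[
\Pr{|X-\Ex{X}| > x\sqrt{M|S|\ell}} \;\leq\; \frac{\mathrm{Var}(X)}{x^2\,M|S|\ell} \;\leq\; \frac{5}{x^2},
\]
as claimed.

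There is no substantial obstacle: the only point to verify carefully is that $\ell$-locality of $X_i$ (i.e.\ that $X_i$ is a measurable function of $(v_k)_{k\in[i-\ell,i+\ell]}$) gives genuine independence from $X_j$ whenever their index windows are disjoint. This is immediate from the definition of $\ell$-local together with the independence of the $(v_k)$ driving the $q$-Mallows process, so the argument goes through without further technicalities.
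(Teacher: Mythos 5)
Your proposal is correct and follows essentially the same argument as the paper: Chebyshev's inequality, with the variance controlled by noting that $\ell$-locality plus independence of the driving variables kills the covariance of pairs with $|i-j|>2\ell$, while Cauchy--Schwarz bounds each of the at most $5\ell|S|$ nearby covariance terms by $M$. Your covariance bound $|\mathrm{Cov}(X_i,X_j)|\leq\sqrt{\operatorname{Var}(X_i)\operatorname{Var}(X_j)}\leq M$ is in fact a slightly cleaner step than the paper's $\Ex{X_iX_j}\leq M$ bound, but the overall route is identical.
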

 \begin{proof}We seek to apply the second moment method to $X$, and thus we must calculate or bound terms of the form $\Ex{X_i\cdot X_j}$. For any $i,j\in S$ the Cauchy-Schwarz inequality gives \begin{equation}\label{eq:anymombdd}\Ex{X_i\cdot X_j} \leq \sqrt{\Ex{X_i^2}\cdot \Ex{X_j^2}}\leq M. \end{equation} 
 	Observe, each $X_{i}$ only depends on (at most) the values $v_a$ for $a\in [i-\ell,i+\ell]$, and so \begin{equation}\label{eq:farmombdd}\Ex{X_i\cdot X_j} = \Ex{X_i}\cdot \Ex{X_j} \qquad \text{ for any $i,j$ with } |i-j|> 2\ell,  \end{equation} by independence of the sequence $(v_i)_{i=1}^n$. Thus, since $X_i\geq 0$, by \eqref{eq:anymombdd} and \eqref{eq:farmombdd} we have 
 	\begin{align*}	\operatorname{Var}(X) &=  \sum_{i,j\in S}\Ex{X_i\cdot X_j} -\Ex{X_i}\Ex{X_j}\leq   \sum_{i,j\in S} \mathbf{1}_{\{|i-j|\leq 2\ell\}} \cdot  M   \leq |S|(4\ell+1)M\leq 5|S|\ell M.\end{align*}Thus applying Chebyshev's inequality \cite[Theorem 3.6]{PandC}, for any $x\geq 0$, gives\[ \Pr{\left|X-\Ex{X}\right|\geq (x/\sqrt{5})\cdot \sqrt{5|S|\ell M} } \leq \Pr{\left|X-\Ex{X}\right|\geq (x/\sqrt{5})\cdot \sqrt{\operatorname{Var}(X)} } \leq (x/\sqrt{5})^{-2},\] concluding the proof. \end{proof}

\subsection{Flushing Events}\label{sec:flushes}

In this section we define two events with respect to the sequence of values $(v_i)_{i=1}^n$ generating the Mallows process \eqref{eq:MallowsProc-1}. These events will help us describe certain graph properties and prove concentration using Lemma \ref{secmom}. First, we will briefly recall how the $q$-Mallows process $(r_i)_{i=1}^n$ evolves: At step $i\geq 1$ we have a permutation $r_i\in S_i$, starting from $r_1=(1)$. Then, to generate the next permutation $r_{i+1}$ in the sequence we sample a random variable $v_{i+1}\sim \nu_{i+1,q}$ and insert the value $i+1$ at relative position $v_{i+1}\in [1,i+1]$ from the left-hand side of $r_i$. To insert $i+1$ at $v_{i+1}$ we shift each of the values in relative positions $v_{i+1}, \dots , i $ one place to the right.  

 For an integer $ k \in [n]$ we define the \textit{flush} event by
\begin{equation}\label{eq:flushingevent}
\mathcal{F}_k= \left\{ \text{for each }i > k\text{ we have }v_i\leq i-k \right\}, 
\end{equation}
and say there is a \textit{flush} at step $k$ (of the $q$-Mallows process) if $\mathcal{F}_k$ holds (see Figure \ref{fig:flush}).

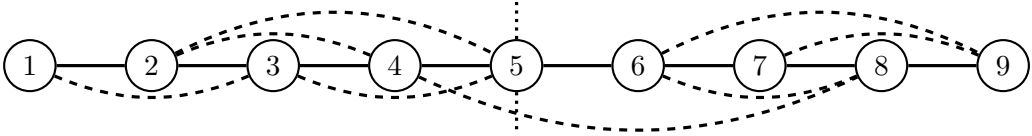
\begin{figure}  
	\center\begin{tikzpicture}[xscale=0.8,yscale=0.7,knoten/.style={thick,circle,draw=black,minimum size=.1cm,fill=white},edge/.style={black,very thick},oedge/.style={orange,very thick}, bedge/.style={dashed,very thick},gedge/.style={green,very thick},medge/.style={magenta,very thick},dotedge/.style={dotted,very thick}]
	\foreach \x in {1,...,9}
	\node[knoten] (\x) at (2*\x,5) {$\x$};
	\node (u) at (10,6.5) {} ;
	\node (d) at (10,3.5) {};
	
	\draw[edge] (1) to (2);
	\draw[edge] (2) to (3);
	\draw[edge] (3) to (4);
	\draw[edge] (4) to (5);
	\draw[edge] (5) to (6); 
	\draw[edge] (6) to (7); 
	\draw[edge] (7) to (8);
	\draw[edge] (8) to (9);
	
	\draw[dotedge] (5) to (u);
	\draw[dotedge] (5) to (d);
	
	\draw[bedge] (7) to[out=25,in=155 ] (9);
	\draw[bedge] (6) to[out=30,in=150 ] (9);
	\draw[bedge] (6) to[out=-25,in=-155 ] (8);
	\draw[bedge] (4) to[out=-28,in=-152 ] (8);
	\draw[bedge] (2) to[out=25,in=155 ] (4);
	\draw[bedge] (2) to[out=30,in=150 ] (5);
	\draw[bedge] (3) to[out=-25,in=-155 ] (5);
	\draw[bedge] (1) to[out=-25,in=-155 ] (3);
 
	\end{tikzpicture}
	\vspace*{-3mm}
	
	\caption{\label{fig:flush}A representation of a flush event $\cf_5$ (see \eqref{eq:flushingevent}) holding in the graph $\layer(P_9,\sigma(P_9))$. In this example $\sigma = (7,9,6,8,4,2,5,3,1)$ was generated by the sequence $\mathbf{x}=(1,1,2,1,3,1,1,3,2)$ satisfying $\cf_5$. Observe that only one edge crosses vertex $5$.}
\end{figure}	

If there is a flush at step $k$ then no subsequent element with value greater than $k$ is inserted into any of the $k$ rightmost positions in the array. Thus, if $\cf_k$ holds, then for any $n\geq k$ the permutation $r_n=sr_k$ will be a concatenation of two strings $s$ and $r_k$, where $s$ is a permutation of the elements $\{k+1, \dots, n\}$ and $r_k$ is the state of the process at step $k$. This property is very useful if we are trying to find certain structures in the permutation $r_n$; since if the structure appears in $r_k$ and $\mathcal{F}_k$ holds (these events are independent) then the elements inserted after $k$ will not affect the structure in $r_k$. One simple example of this is that if $\{v_k=1\}\cap\mathcal{F}_k$ holds then vertex $k$ is a cut vertex of $\cp(n,q)$; see Section \ref{sec:sep} for details and also precise bounds on $\Pr{\mathcal{F}_k}$). Thus, if we wish to show that there is a cut vertex w.h.p., then a standard approach would be to let $X$ count the number of $k\in [n]$ such that $\{v_k=1\}\cap\mathcal{F}_k$ holds, and then bound $\Pr{X=0}$ using the second moment method. This requires control over the variance of $X$. Lemma \ref{secmom} follows this strategy for the special case of local events. However, any event containing the flush event $\mathcal{F}_k$ cannot be $\ell$-local for any $\ell <n-k$ since it specifies that $v_i\leq i-k$ for all $i >k$. 

Observe that, for $q\in(0,1)$, the position each element is inserted is biased towards the left-hand end. Suppose we can find a $b_n:=b_n(q)$ such that  w.h.p.\ for all $i\in [n]$ we have $v_i\leq b_n$. Then, we can define a new event $\mathcal{L}_{k}$ which only specifies the first $b_n$ values in the flush $\mathcal{F}_k$, and if we condition on $\mathcal{L}_k$ then w.h.p.\ $\mathcal{F}_k$ will also hold. To make this precise let
\begin{equation}\label{eq:b}b_n= \left\lceil \frac{8\log n}{\log(1/q)}\right\rceil. \end{equation} Then, as $\Pr{v_k > i}\leq  q^{i} $ for $i<k$ by \eqref{eq:nuupperbdd}, for $n\geq 2$ the union bound gives
\begin{equation}\label{eq:probnobigv}\Pr{\bigcup_{k\in[n]}\{v_k>b_n \} } \leq \sum_{k\in[n]}\Pr{v_k > b_n}=  \sum_{k=b_n+1}^{n} q^{b_n} \leq n\cdot  q^{b_n} \leq  n^{-7}. \end{equation}
If we recall that $v_i$ is the insert position of $i$ relative to the left-hand end, then \eqref{eq:probnobigv} shows that w.h.p.\ no element is inserted at relative position $r>b_n$. The constant $8$ in \eqref{eq:b} is fairly arbitrary but works in our analysis. Thus, conditional on the event $\left\{v_i\leq b_n\text{ for all } i\in [n]\right\}$, if an element $i'<i$ is not within $b_n$ places from the left-hand end of $r_{i-1}$ then when element $i$ is inserted it will not be adjacent to $i'$. This is significant since adjacency of $i$ and $i'$ in the final permutation $r_n$ determines the presence of the edge $ii'$ in $\cp(n,q)$. So, if we want to preserve the subpermutation $r_k$ w.h.p.\ then we do not necessarily need to condition on all elements $i>k$ not being inserted in the rightmost $k$ places, we only need to specify this for the next $b_n$ elements. We can now define, for any $1\leq k\leq n$, the \textit{local flush} event given by \begin{equation}\label{eq:localflushingevent}
\mathcal{L}_k= \left\{\text{ For each } k <i \leq  k+ b_n(q)\text{ we have }v_i\leq i-k \right\}. 
\end{equation} 
The local flush event $\mathcal{L}_k$ captures the desirable property of preserving $r_k$, provided we condition on the event $\ce=\cap_{i\in [n]}\{v_i \leq b_n(q) \}$. As $\ce$ occurs w.h.p.\ by \eqref{eq:probnobigv}, we can essentially use the local flush $\mathcal{L}_k$ in the same way as $\mathcal{F}_k$. The advantage of $\mathcal{L}_k$ is that it only fixes the positions of elements $k+1, \dots, k+b_n$, as opposed to the positions of all elements $i>k$ in $\cf_k$, and thus it is $\lceil b_n/2\rceil$-local with respect to $k + \lceil b_n/2\rceil$. The next result `localises' events involving flushes.

 \begin{lemma}\label{lem:localify} Let $n,\ell\geq 1$  be integers, $0<q<1$, and $(c_i)_{i\in [n]}$ be any non-negative real sequence. Let $\ce_i= \mathcal{B}_i\cap \cf_i  $ where $\mathcal{B}_i$ is $\ell$-local to $i$. Then, the event  $\mathcal{D}_i= \mathcal{B}_i\cap \mathcal{L}_i   $ is $\max\{b_n,\ell\}$-local to $i$, where $b_n:=b_n(q)$ is given by \eqref{eq:b}. Furthermore, if $X=\sum_{i}c_i\cdot \mathbf{1}_{\ce_i}$ and $Y=\sum_{i}c_i\cdot \mathbf{1}_{\mathcal{D}_i}$,  then \[\Pr{X\neq Y}\leq  n^{-7},\qquad \text{ and }\qquad\Ex{X}\leq \Ex{Y}\leq \Ex{ X} + n^{-7}\cdot \sum_{i}c_i   .\]  \end{lemma}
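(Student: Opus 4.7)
The plan is to establish the locality claim by direct inspection, and then use the high-probability event $\mathcal{E}=\bigcap_{k\in[n]}\{v_k\leq b\}$ from \eqref{eq:probnobigv} as a bridge between the flush $\cf_i$ and the local flush $\mathcal{L}_i$.

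For the locality part, I would simply unpack the definitions: by hypothesis $\mathcal{B}_i$ depends only on the $v_j$ with $j\in[i-\ell,i]$, and by \eqref{eq:localflushingevent} the event $\mathcal{L}_i$ depends only on $v_j$ with $j\in(i,i+b]$. Hence $\mathcal{D}_i=\mathcal{B}_i\cap\mathcal{L}_i$ is determined by $v_j$ for $j\in[i-\ell,i+b]\subseteq[i-(\ell+b),i+(\ell+b)]$, which is $(\ell+b)$-local to $i$.

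For the substitution claims, the key combinatorial observation is the pair of set containments
\[
\cf_i\subseteq\mathcal{L}_i\qquad\text{always,}\qquad\text{and}\qquad\mathcal{L}_i\cap\mathcal{E}\subseteq\cf_i.
\]
The first is immediate: $\cf_i$ demands $v_j\leq j-i$ for \emph{all} $j>i$, which in particular covers the sub-range $j\in(i,i+b]$ appearing in $\mathcal{L}_i$. The second is the whole point of introducing $b$: on $\mathcal{E}$, any $j>i+b$ satisfies $v_j\leq b<j-i$ automatically, so when this is combined with the $\mathcal{L}_i$-constraints on $(i,i+b]$ one recovers the full flush condition. Consequently $\ce_i\subseteq\mathcal{D}_i$ pointwise (yielding $\mathbf{1}_{\ce_i}\leq\mathbf{1}_{\mathcal{D}_i}$ and hence $\Ex{X}\leq\Ex{Y}$, using that the $c_i$ are non-negative in the intended applications), while on $\mathcal{E}$ we have $\ce_i=\mathcal{D}_i$ and therefore $X=Y$.

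The two quantitative bounds then fall out by feeding this into \eqref{eq:probnobigv}. Since $\{X\neq Y\}\subseteq\mathcal{E}^c$, we get $\Pr{X\neq Y}\leq \Pr{\mathcal{E}^c}\leq 2/n^7$. For the expectation gap, write $Y-X=\sum_i c_i(\mathbf{1}_{\mathcal{D}_i}-\mathbf{1}_{\ce_i})$, note each summand vanishes on $\mathcal{E}$, and bound
\[
\Ex{Y-X}=\Ex{(Y-X)\mathbf{1}_{\mathcal{E}^c}}\leq \sum_i|c_i|\Pr{\mathcal{E}^c}\leq \frac{2}{n^7}\sum_i|c_i|.
\]
There is no real obstacle here; the lemma is a bookkeeping device whose subtlety lies entirely in correctly identifying that $\mathcal{L}_i$ together with the uniform bound $v_k\leq b$ reconstructs the flush $\cf_i$, which is exactly what \eqref{eq:b} was calibrated to guarantee via the estimate \eqref{eq:probnobigv}.
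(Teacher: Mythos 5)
Your proposal is correct and follows essentially the same route as the paper: it uses the high-probability event $\mathcal{E}=\bigcap_{k\in[n]}\{v_k\leq b\}$ from \eqref{eq:probnobigv} together with the containments $\ce_i\subseteq\mathcal{D}_i$ and $\mathcal{D}_i\cap\mathcal{E}\subseteq\ce_i$ to get $\Pr{X\neq Y}\leq\Pr{\mathcal{E}^c}\leq 2n^{-7}$ and the expectation bounds. Your explicit remark that the lower bound $\Ex{X}\leq\Ex{Y}$ really uses $c_i\geq 0$ (as in all the paper's applications) is, if anything, slightly more careful than the paper's own one-line justification.
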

 \begin{proof}
 
 	The event $\mathcal{D}_i$ is $\max\{b_n,\ell\}$-local to $i$ as $\mathcal{B}_i$  is $\ell$-local to $i$ and  $\mathcal{L}_i$, given by \eqref{eq:localflushingevent}, is $b_n$-local to $i$. Recall $v_i \sim\nu_{i,q}$ for $i\in [n]$ and $\mathcal{E}=\cap_{i\in [n]}\{v_i\leq b_n(q)  \}$. Note that $\Pr{\mathcal{E}^c}\leq   n^{-7}$ by \eqref{eq:probnobigv}, and although \eqref{eq:probnobigv} only holds for $n\geq 2$  we can assume this or else $\Pr{\mathcal{E}^c}=0 $ trivially. 
 	
 	Observe that $\ce_i \subseteq \mathcal{D}_i$ for any $i\in [n]$. Furthermore, the events $\ce_i$ and $\mathcal{D}_i$ impose the same restrictions on the random variables $(v_j)_{j\leq i+ b_n(q)}$. In particular, $(v_j)_{i<j\leq i+ b_n(q)}$ satisfy both the flush $\mathcal{F}_i$ and local flush $\mathcal{L}_i$ event. However, conditional on $\mathcal{E}$ no random variable $v_i$ is larger than $b_n(q)$ and so the remaining values $ (v_j)_{j> i + b_n } $ also satisfy $\cf_i$. It follows that $ \mathcal{E}\cap \mathcal{D}_i \subseteq \ce_i$ for any $i\in [n]$ and hence $\Pr{X\neq Y} \leq \Pr{\mathcal{E}^c} $. 
 	
 For the upper bound in the last part of the claim we have $Y=\sum_{i}c_i\cdot \mathbf{1}_{\ce_i}\leq \sum_{i}c_i $ and thus \[\Ex{Y} =  \Ex{X\cdot \mathbf{1}_{\{Y=X\} } } +\Ex{Y \mid Y\neq X }\cdot \Pr{Y\neq X} \leq \Ex{X} +  \left(\sum_{i}c_i \right)\cdot  n^{-7} .\] The lower bound $ \Ex{Y}\geq \Ex{X}$ holds since $\mathcal{D}_i \supseteq \ce_i$ and $c_i\geq 0$ for any $i\in [n]$. 
 \end{proof}

 \section{Expansion and Relations Between \texorpdfstring{$q$}{q}-Mallows Measures}\label{sec:q=1}
 
 This section concerns  $\cp(n,q)$ with $q$ sufficiently close to $1$. In Lemma \ref{lem:expander} we will prove that $\cp(n,1)$ is an expander with probability $1-\mathrm{e}^{-\Omega(n)}$. We then establish  Lemma \ref{lem:largeqpatterncouple} which allows us to relate properties of $\cp(n,q)$ to those of $\cp(n,1)$. This is then used to extend Lemma \ref{lem:expander} to Lemma \ref{treebanddiam1}, which shows that, if $q$ is sufficiently close to $1$, then w.h.p.\ $\cp(n,q)$ is an expander. 
 
 As mentioned in the introduction, it is possible to show that $\cp(n,1)$ is an expander with probability $1-\lo{1}$ by adapting a result of Kim \& Wormald \cite[Theorem 1]{KW}, however it is the fact that Lemma \ref{lem:expander} holds with probability $1-\mathrm{e}^{-\Omega(n)}$ which allows us to relate this to a smaller $q$ in Lemma \ref{treebanddiam1}. This combination of Lemmas \ref{lem:expander} and \ref{lem:largeqpatterncouple} is also used to prove lower bounds on the treewidth in Section \ref{sec:treewidth} by finding a subdivided expander on roughly $1/(1-q)$ vertices as a subgraph in $\cp(n,q)$. Again, if we knew only that the $\cp(n,1)$ was an expander with probability $1-\lo{1}$, then we would not be able to get such good bounds later in Section \ref{sec:treewidth}.

 \subsection{Expansion in the case \texorpdfstring{$q=1$}{q=1}}
 Recall the definition of $ \partial (S)=\{uv\in E(G) : u\in S,v\in V(G)\setminus S\}$, the edge boundary of $S\subseteq V$, from Section \ref{sec:widthdef}.  The following bound is quite crude but (crucially) it is independent of $|S|$.

 \begin{lemma}\label{lem:sizebdd}For any integer $ 1\leq k\leq  n-1$ there are at most $2\cdot \binom{n-1}{k}$ distinct vertex subsets $S$ of an $n$-vertex path such that $|\partial(S)|= k $.
 \end{lemma}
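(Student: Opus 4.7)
The plan is to reduce the counting problem to a simple bijection between subsets of $V(P_n)$ and binary strings of length $n$, and then count binary strings by their number of sign changes.

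Identify $V(P_n)$ with $[n]$ and the edges with the pairs $\{i,i+1\}$ for $i \in [n-1]$. For a subset $S \subseteq [n]$, encode $S$ as its characteristic string $\chi(S) = (s_1, s_2, \ldots, s_n) \in \{0,1\}^n$, where $s_i = 1$ if $i \in S$ and $s_i = 0$ otherwise. Then an edge $\{i, i+1\}$ belongs to $\partial(S)$ if and only if $s_i \ne s_{i+1}$, so $|\partial(S)|$ equals the number of transitions in $\chi(S)$, that is, the number of indices $i \in [n-1]$ with $s_i \ne s_{i+1}$.

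The map $S \mapsto \chi(S)$ is a bijection between subsets of $[n]$ and binary strings of length $n$. Therefore the number of subsets $S$ with $|\partial(S)| = k$ equals the number of strings in $\{0,1\}^n$ with exactly $k$ transitions. Such a string is determined by two pieces of data: (i) its first bit $s_1 \in \{0,1\}$, and (ii) the set $T \subseteq [n-1]$ of transition positions, with $|T| = k$. Conversely, each choice of $(s_1, T)$ with $|T| = k$ determines a unique string with exactly $k$ transitions (set $s_{i+1} = s_i$ if $i \notin T$ and $s_{i+1} = 1 - s_i$ if $i \in T$).

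Hence the number of subsets $S$ with $|\partial(S)| = k$ is exactly $2 \cdot \binom{n-1}{k}$, which in particular is at most $2 \cdot \binom{n-1}{k}$. The main (and really only) step is recognising that $|\partial(S)|$ coincides with the number of transitions in $\chi(S)$, after which the counting is immediate; there is no substantive obstacle.
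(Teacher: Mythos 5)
Your proof is correct and is essentially the paper's argument: you choose the $k$ boundary edges (your transition set $T \subseteq [n-1]$) and then the membership of the first vertex (your first bit $s_1$), exactly as the paper does, with the characteristic-string encoding serving only as a formalisation of the same counting. No gaps.
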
 
 \begin{proof}Observe that there are $\binom{n-1}{k}$ ways to choose the $k$ edges of the boundary set $\partial(S)$. Each set $\partial(S)$ of boundary edges gives two possible sets $S$ depending on whether the first vertex of the path is in $S$ or not. 
 \end{proof}

 Recall the definitions of the edge and vertex isoperimetric numbers $\phi$ and $\iso$ from Section \ref{sec:widthdef}. We now prove that w.h.p.\ a tangled path generated from a uniform permutation is an expander.
 
 \begin{lemma}\label{lem:expander}For any integer $n\geq 100$, we have \[\Pr{\iso(\cp(n,1))\leq \frac{1}{40}}\leq 1000  \cdot n^{7/2}\cdot    \left( \frac34\right)^{n}.\]
 \end{lemma}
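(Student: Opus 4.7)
The strategy is a union bound over subsets $S\subseteq[n]$ with $|S|\le n/2$ that could witness $\iso(\cp(n,1))<1/40$, combined with Stirling-type estimates.

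I would first reduce the vertex-isoperimetric event to edge-boundary events in each individual path. Each vertex of $\bar N_{P_n}(S)$ has at most two $P_n$-neighbours, and $\bar N_{P_n}(S)\subseteq \bar N_\cp(S)$, so $|\bar N_\cp(S)|\ge |\partial_{P_n}(S)|/2$; the symmetric argument for the second path gives $|\bar N_\cp(S)|\ge |\partial_{\sigma(P_n)}(S)|/2$. Hence $|\bar N_\cp(S)|<|S|/40$ forces both $|\partial_{P_n}(S)|<|S|/20$ and $|\partial_{\sigma(P_n)}(S)|<|S|/20$, reducing a vertex-boundary condition in $\cp$ to two path-edge-boundary conditions.

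Next I would exploit the uniformity of $\sigma$: for every fixed $s$-subset $S$ the set $\sigma^{-1}(S)$ is uniform among $s$-subsets of $[n]$, and $|\partial_{\sigma(P_n)}(S)|=|\partial_{P_n}(\sigma^{-1}(S))|$. Letting $A_s$ denote the number of $s$-subsets $U\subseteq [n]$ with $|\partial_{P_n}(U)|<s/20$, Lemma~\ref{lem:sizebdd} gives $A_s\le \sum_{k<s/20}2\binom{n-1}{k}$, and the union bound yields
\[
\Pr{\iso(\cp(n,1))<1/40}\;\le\;\sum_{s=1}^{\lfloor n/2\rfloor}\frac{A_s^2}{\binom{n}{s}}.
\]
Summands with $s\le 20$ vanish because $|\partial_{P_n}(S)|\ge 1$ for every proper non-empty $S$.

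I would then estimate each surviving summand using the cumulative binomial bound $\sum_{k\le m}\binom{n-1}{k}\le e\,n^m/m!$ and Stirling's formula for $\binom{n}{s}$. Writing $s=\alpha n$, the summand has an explicit exponential rate (controlled by a binary-entropy expression in $\alpha$ and $\alpha/20$) which I would show is bounded above by $\log(3/4)$ uniformly in $\alpha\in(0,1/2]$; the $\sqrt{n}$ Stirling corrections together with the outer sum over $s$ are absorbed by the $n^{7/2}$ factor and the numerical constant $1000$.

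The main obstacle is this final uniform estimate. For $\alpha$ bounded away from $0$ the relevant entropy difference is comfortably negative and leaves ample room. For $\alpha$ close to $0$ the entropy difference itself tends to $0$, so one must instead use the sharper bound $A_s\le O(n^{s/20}/(s/20)!)$ directly together with the matching Stirling lower bound $\binom{n}{s}\ge (ne/s)^s/\sqrt{s}$, and verify that even in this small-$\alpha$ regime the base $3/4$ is achieved. Balancing these two regimes across the full range of $\alpha$ is the only non-routine calculation.
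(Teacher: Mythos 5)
Your plan follows the paper's proof in outline (reduce a small vertex boundary of $\cp(n,1)$ to small path-edge boundaries, count candidate sets with Lemma~\ref{lem:sizebdd}, use that $\sigma^{-1}(S)$ is a uniform $s$-subset, union bound, then Stirling), and your reduction via maximum degree $2$ in each path, the identity $|\partial_{\sigma(P_n)}(S)|=|\partial_{P_n}(\sigma^{-1}(S))|$, and the bound $\sum_s A_s^2/\binom{n}{s}$ are all fine. The genuine gap is exactly the step you defer as ``the only non-routine calculation'': the uniform rate estimate over $\alpha\in(0,1/2]$ cannot be made to work, and no sharper cumulative-binomial or Stirling bound rescues it. For $s$ of constant order your summand is only polynomially small: the prefix and the suffix of length $81$ of $P_n$ have path-edge boundary $1<81/20$, so $A_{81}\ge 2$ and the $s=81$ term of your sum is at least $4/\binom{n}{81}\ge 4n^{-81}$, which exceeds $1000\,n^{7/2}(3/4)^n$ once $n$ is moderately large. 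Worse, this is not slack in the union bound: on the event that $\sigma$ maps $\{1,\dots,81\}$ onto itself, which has probability exactly $1/\binom{n}{81}$, the set $S=\{1,\dots,81\}$ satisfies $\bar{N}(S)\subseteq\{82,\sigma(82)\}$, hence $\iso(\cp(n,1))\le 2/81<1/40$. So $\Pr{\iso(\cp(n,1))<\frac{1}{40}}\ge 1/\binom{n}{81}$, and no balancing of your two regimes can produce an $\mathrm{e}^{-\Omega(n)}$ bound valid for all large $n$: constant-size sets already contribute polynomially.

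For comparison, the paper's proof takes the same route (it first passes to edge expansion via the maximum degree $4$, then uses Lemma~\ref{lem:sizebdd}, uniformity of $\sigma$, and Stirling), but it disposes of the small-$s$ regime through the assertion in \eqref{eq:sbddsss} that $s=n/2$ maximises $s^{s}(n-s)^{n-s}$, i.e.\ that $s!(n-s)!\le \mathrm{e}^2 n\,\mathrm{e}^{-n}(n/2)^n$; since $s!(n-s)!=n!/\binom{n}{s}$ is minimised at $s=n/2$ and is largest for small $s$, that inequality is reversed precisely where your entropy difference $2H(\alpha/20)-H(\alpha)$ degenerates to $0$. In other words, your more careful bookkeeping has surfaced a real obstruction rather than a technicality: as stated, the exponential bound cannot be proved (the witness above contradicts it for large $n$), and a correct version must either restrict the minimisation to sets of size $\Omega(n)$, where the entropy calculation does give a genuine exponential rate, or settle for a polynomially small failure probability.
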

 
 \begin{proof}Observe that $\iso(\cp(n,1))\geq \phi(\cp(n,1))/4 $ by \eqref{eq:expansionrels} since $\cp(n,1)$ has degree at most $4$. Thus, to prove this Lemma it suffices to bound $\Pr{\phi(\cp(n,1))\leq  1/10}$ from above. 
 	
 	Let $\cp=\layer(P_n, \sigma(P_n))$ where $\sigma\sim \mu_{n,1}$. Now, if the edge isoperimetric number of $\cp$ is at most $\alpha$ then for any set $S\subset [n]$ there can be at most $\alpha|S|$ edges of either the permuted or un-permuted path in $\partial(S)$. That is, for any  $\alpha> 0$ we have  \begin{equation}\label{eq:cupexpression}\{\phi(\cp)\leq    \alpha \} \subseteq \bigcup_{ s =1}^{\lfloor n/2\rfloor}\bigcup_{k=1}^{\lfloor\alpha s \rfloor}\; \; \bigcup_{ S\subseteq V :  |S|=s,  \left| \partial(S)\cap E(P_n)\right| =k  }\{\left| \partial(S)\cap E(\sigma(P_n))\right|\leq \lfloor \alpha s\rfloor -k\}.\end{equation}
 	By Lemma \ref{lem:sizebdd} there are at most $2\cdot\binom{n-1 }{k} $ sets $S$ (of any size) such that  $\left| \partial(S)\cap E(P_n)\right| =k  $. So by applying this bound, the union bound, and rewriting \eqref{eq:cupexpression}, where we ignore the $-k$,  we have  
 	\begin{equation}\label{eq:probexpression} \Pr{\phi(\cp)\leq    \alpha } \leq 2\sum_{s=1}^{\lfloor n/2 \rfloor}\sum_{k=1}^{\lfloor \alpha s\rfloor} \binom{n-1 }{k}   \max_{S\subseteq V : |S|=s}\Pr{\left| \partial(S)\cap E(\sigma(P_n))\right|\leq\lfloor  \alpha s \rfloor  } . \end{equation}  We will now bound the probability on the right-hand side, note that since $\sigma$ is a uniform permutation this is the same for all $S$ with $|S|=s$. 
 	
 	To begin we will view the action of the uniform random permutation $\sigma$ on $P_n$ as a relabelling of $V(P_n) = [n]$. Under this relabelling a given set $S\subseteq V(P_n)$, with $|S|=s$, is equally likely to be mapped to any other $S'\subseteq V(P_n)$ with $|S'|=s$. The number of $S'\subseteq V(P_n)$ with an edge boundary of size $k\geq 0$ is at most $2\cdot \binom{n-1}{ k}$ by Lemma~\ref{lem:sizebdd}. Since $|S'|=s$ there are $s!$ ways to order the elements of $S$ within $S'$ and $(n-s)!$ ways to organise the remaining elements within the path. Thus, if we restrict to a fixed $0<\alpha\leq 1/10$, then for any $S\subseteq V$ with $|S|=s$ we have    
 	\begin{equation}\label{eq:probbdd} \!\!\!\Pr{\left| \partial(S)\cap E(\sigma(P_n))\right|\leq \lfloor  \alpha s \rfloor } \leq 2\sum_{k=1}^{\lfloor \alpha s\rfloor } \binom{n-1}{ k} \frac{s! (n-s)! }{n!} \leq 2n \binom{n-1}{ \lfloor \alpha s\rfloor} \frac{s! (n-s)!}{n!} ,   \end{equation} where the last bound holds since $\alpha s\leq n/20$ and $\binom{x}{y}$ is increasing in $y$ provided $y<\lfloor x/2\rfloor$.    
 	
 	Recall that for any integers $1\leq k\leq n$ and any real number $x$ such that $|x|\leq n$ we have 
 	\begin{equation}\label{eq:basic}\binom{n}{k}\leq \left(\frac{n\mathrm{e}}{k}\right)^k, \qquad  \sqrt{2\pi n}\left(\frac{n}{\mathrm{e}}\right)^n\leq  n!\leq \mathrm{e}\sqrt{n}\left(\frac{n}{\mathrm{e}}\right)^n,\qquad \text{and} \qquad \left(1 + \frac{x}{n} \right)^{n}\leq \mathrm{e}^x.\end{equation}  Thus, as $s\leq \lfloor n/2\rfloor$ and $0<\alpha \leq 1/10$ are fixed, by \eqref{eq:basic} and monotonicity of $\binom{x}{y}$ we have 
 	\begin{equation}\label{eq:binomalpha} \binom{n-1}{\lfloor \alpha s\rfloor } \leq  \left(\frac{n\mathrm{e}}{\lfloor \alpha \lfloor \frac{n}{2}\rfloor \rfloor }\right)^{\lfloor \alpha \lfloor \frac{n}{2}\rfloor \rfloor }\leq   \left( \frac{2\mathrm{e}}{\alpha \left(1-\frac{4}{\alpha n}\right)}\right)^{\frac{\alpha n}{2}}\leq   \left( \mathrm{e}^{-\frac{4}{\alpha}}\right)^{-\frac{\alpha}{2}}\left(\frac{2\mathrm{e}}{\alpha }\right)^{\frac{\alpha n}{2}}=\mathrm{e}^2\left(\frac{2\mathrm{e}}{\alpha }\right)^{\frac{\alpha n}{2}},\end{equation} provided that $n>4/\alpha$. Observe that $s=n/2$ maximises $s^{s}\left( n-s \right)^{n-s}$, thus \eqref{eq:basic} gives\begin{equation}\label{eq:sbddsss}s! \cdot (n-s)! \leq \mathrm{e}\sqrt{s}\left( \frac{s}{\mathrm{e}} \right)^{s} \cdot \mathrm{e}\sqrt{n-s}\left( \frac{n-s}{\mathrm{e}} \right)^{n-s}   \leq \frac{\mathrm{e}^2n}{\mathrm{e}^n} \cdot s^{s}\left( n-s \right)^{n-s} \leq \frac{\mathrm{e}^2n}{\mathrm{e}^n} \cdot \left(\frac{n}{2} \right)^{n}.\end{equation} For $n>4\alpha$, inserting the bounds from \eqref{eq:basic}, \eqref{eq:binomalpha} and \eqref{eq:sbddsss} into \eqref{eq:probexpression} and \eqref{eq:probbdd} gives
 	\begin{align*}
 	\Pr{\phi(\cp)\leq    \alpha } &\leq 2\sum_{s=1}^{\lfloor n/2 \rfloor}\sum_{k=1}^{\lfloor \alpha s\rfloor} \binom{n-1}{k}  \cdot 2n \binom{n-1}{ \lfloor \alpha s\rfloor} \frac{s! (n-s)!}{n!} \\
 	&\leq 2n^2 \mathrm{e}^2\left(\frac{2\mathrm{e}}{\alpha }\right)^{\frac{\alpha n}{2}} \cdot 2n \mathrm{e}^2\left(\frac{2\mathrm{e}}{\alpha }\right)^{\frac{\alpha n}{2}} \frac{\frac{\mathrm{e}^2n}{\mathrm{e}^n} \cdot \left(\frac{n}{2} \right)^{n}}{\sqrt{2\pi n}\left(\frac{n}{\mathrm{e}}\right)^n} \\
 	&= \frac{4\mathrm{e}^6}{\sqrt{2\pi}}\cdot  n^{7/2}\cdot  \left( \frac{2\mathrm{e}}{\alpha}\right)^{\alpha n} 2^{-n}, 
 	\end{align*} One can check that if we fix $\alpha =1/10$ then $\left( \frac{2\mathrm{e}}{\alpha}\right)^{\alpha} \approx 1.4912<3/2$. Thus taking $\alpha=1/10$ gives $\Pr{\phi(\cp)\leq  1/10 } \leq 1000 n^{7/2} (3/4)^n$ for $n\geq 100$ as $4\mathrm{e}^6/\sqrt{2\pi}<1000$.
 \end{proof}

 \subsection{Relating Different \texorpdfstring{$q$}{q}-Mallows Measures}
  The aim of this section is to prove the following result which allows us to relate properties satisfied by the non-uniform tangled path ($q\neq 1$) to those satisfied in the uniform case. Let $2^{\binom{n}{2}}$ denote the set of all (labelled) $n$-vertex graphs.     
  
   \begin{lemma}\label{lem:largeqpatterncouple}Let $n \geq 1$ be an integer, and $0< q<1$. Then, the following holds: \begin{enumerate}[label=(\roman*), left= -6pt]
  		\item\label{itm:c1} For any $A\subseteq S_n$, we have 
  		$\mu_{n,q}(A)\leq \mathrm{e}^{9n^2(1-q)}\cdot \mu_{n,1}(A)  .$
  		\item\label{itm:c2} For any $B\subseteq 2^{\binom{n}{2}}$, we have 
  		$\Pr{\cp(n,q)\in B} \leq \mathrm{e}^{9n^2(1-q)}\cdot \Pr{\cp(n,1)\in B}  .$
  		\item\label{itm:c3} If $q =1 - \lo{n^{-2}}$, then for any $B\subseteq 2^{\binom{n}{2}}$ we have $\Pr{\cp(n,q)\in B} =  \Pr{\cp(n,1)\in B} +\lo{1}.$
  	\end{enumerate} 
  	
  \end{lemma}

  In order to prove this result we must first prove a lemma which bounds the ratio between densities of the truncated geometric and uniform distributions. 
 \begin{lemma}\label{lem:ratio}For any integers $1\leq i\leq k$, and $0<q< 1$, we have
 	\begin{equation*}  \frac{\nu_{k,q}(i)}{\nu_{k,1}(i)} \leq 1 + 9k(1-q) \leq \mathrm{e}^{9k(1-q)} . \end{equation*}
 \end{lemma}
\begin{proof}For ease of notation we will sometimes use the parametrisation $q=1-x$. By the definition \eqref{eq:MallowsDist} of $\nu_{k,q}$, for any $i\leq k$ we have \begin{equation}\label{eq:nuexpression}\nu_{k,q}(i) = \frac{(1-q)q^i}{1-q^k}= \frac{x(1-x)^i}{1-(1-x)^k}.\end{equation}We need a `reverse Bernoulli inequality': for any integer $r\geq 1$ and real $y$ satisfying $|y|\leq 1/(2r)$,
	\begin{equation}\label{eq:revber} (1+y)^r = \sum_{i=0}^r\binom{r}{i}y^i \leq 1 + ry + \sum_{i=2}^r\left(ry\right)^i \leq 1 + ry + (ry)^2\sum_{i=0}^{r-2}  2^{-i} \leq 1 + ry + 2(ry)^2.    \end{equation} We will now begin with the case $1-1/(4k) \leq q <1 $.  Applying \eqref{eq:revber} to \eqref{eq:nuexpression}, for $|x|\leq 1/(4k)$ and $i\leq k$, we have
\begin{equation*}		\nu_{k,q}(i)\leq \frac{x(1-ix+2(ix)^2)}{1 - (1-kx+2(kx)^2)} = \frac{1}{k}\cdot \frac{1-ix+2(ix)^2}{1-2kx} = \frac{1}{k}\cdot\left(1 +\frac{2kx-ix+2(ix)^2}{1-2kx}   \right).
	\end{equation*}
Recall that $\nu_{k,1}(i)=1/k$ for all $1\leq i\leq k$. Thus, by the above and $|x|\leq 1/(4k)$, we have \begin{equation}\label{eq:largeqrat}\frac{\nu_{k,q}(i)}{\nu_{k,1}(i)} \leq 1 +\frac{2kx+2(kx)\cdot (1/4) }{1-2/4 } = 1 + 5kx= 1 + 5k(1-q). 
	\end{equation} 

We now deal with the remaining case $0<q <p $ where $p:= 1-1/(4k) $. By \eqref{eq:nuexpression} we have
\begin{equation}\label{eq:presmallqrat}\frac{\nu_{k,q}(i)}{\nu_{k,p}(i)} =  \frac{(1-q)q^i}{1-q^k}\cdot \frac{1-p^k}{(1-p)p^i} = \frac{1-q}{1-p}\cdot \left(\frac{q}{p}\right)^i \cdot \frac{1-p^k}{1-q^k}\leq  \frac{1-q}{1-p}  = 4k(1-q).    
\end{equation}Thus by \eqref{eq:largeqrat} and \eqref{eq:presmallqrat}, for any $0<q\leq 1 - 1/(4k)$ we have
\begin{equation}\label{eq:smallqrat}
	\frac{\nu_{k,q}(i)}{\nu_{k,1}(i)} = \frac{\nu_{k,q}(i)}{\nu_{k,p}(i)}\cdot \frac{\nu_{k,p}(i)}{\nu_{k,1}(i)} = 4k(1-q)\cdot \left( 1 + 5k \cdot 1/(4k)  \right) =  9k(1-q).
\end{equation}  
The first inequality in the statement follows as $1+9k(1-q)$ is an upper bound for  \eqref{eq:largeqrat}  and \eqref{eq:smallqrat}. Then  the second follows since $ 1+ y \leq \mathrm{e}^y$ for all $y$. \end{proof}
 
Using this lemma we can now prove the main result in this section. 
\begin{proof}[Proof of Lemma \ref{lem:largeqpatterncouple}]By the description of the $q$-Mallows process we see that for every integer $n\geq 1$, $0<q\leq 1$, and $A\subset S_n$, there exists a set $\{(x_k^i)_{k=1}^n : i \in \mathcal{I}\}$ of inputs to the $q$-Mallows process such that 
	\[\mu_{n,q}(A) = \sum_{i\in \mathcal{I}}\prod_{k \in [n]} \nu_{k,q}(x_k^i).\]Furthermore, since the $q$-Mallows is a deterministic function of random inputs, the set $\{(x_k^i)_{k=1}^n : i \in \mathcal{I}\}$ does not depend on $q$. Thus, by Lemma \ref{lem:ratio}, for any event $A\subseteq S_n$ we have 
	\begin{align*}\mu_{n,q}(A) &= \sum_{i\in \mathcal{I}}\prod_{k \in [n]} \nu_{k,q}(x_i^j) \leq   \sum_{i\in \mathcal{I}}\prod_{k \in [n]} \mathrm{e}^{9k(1-q)}\cdot  \nu_{k,1}(x_i^j). \intertext{Then, using the bound $k\leq n$, we can deduce Item \ref{itm:c1} since }\mu_{n,q}(A)&\leq   \mathrm{e}^{9n^2(1-q)}\cdot  \sum_{i\in \mathcal{I}}\prod_{k \in [n]} \nu_{k,1}(x_i^j)= \mathrm{e}^{9n^2(1-q)} \cdot \mu_{n,1}(A).\end{align*} 
		
	\noindent\textit{Item \ref{itm:c2}}: Recall that the tangled path $\cp(n,q)$ is generated deterministically from a Mallows permutation $\sigma\sim\mu_{n,q}$ by the construction $\layer\left(\sigma(P_n),P_n \right)$. Thus for any $B\subseteq 2^{\binom{n}{2}}$ there is a corresponding set of permutations $A\subseteq S_{n}$ such that $\layer\left(\sigma(P_n),P_n \right) \in B$ if and only if $\sigma \in A $. The result now follows from Item \ref{itm:c1}.

	\medskip 
	
	\noindent\textit{Item \ref{itm:c3}}: Since $1-q=\lo{n^{-2}}$, for any $B\subseteq 2^{\binom{n}{2}}$ we have 
	\[\Pr{\cp(n,q)\in B} \leq \mathrm{e}^{9n^2(1-q)}\cdot \Pr{\cp(n,1)\in B} = \mathrm{e}^{o(1)}\cdot \Pr{\cp(n,1)\in B} = \Pr{\cp(n,1)\in B} +o(1), \] by	 Item \ref{itm:c2} and as $\mathrm{e}^{o(1)} = 1+o(1)$ by the Taylor expansion.\end{proof}

 \subsection{Proof of Theorem \ref{treebanddiam}}
 
 We can now apply Lemmas \ref{lem:expander} and \ref{lem:largeqpatterncouple} to prove Lemma \ref{treebanddiam1}. Theorem \ref{treebanddiam} is simply a less explicit restatement of Lemma \ref{treebanddiam1}, so it follows directly from this.

 \begin{lemma}\label{treebanddiam1}	If $ q \geq 1 - \frac{1}{50n}$, then $\Pr{\iso(\cp(n,q))\leq \frac{1}{40}}=\lo{1}$. 
 \end{lemma}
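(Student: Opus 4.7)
The plan is to combine the exponential concentration in Lemma \ref{lem:expander} with the coupling from Lemma \ref{lem:largeqpatterncouple}\ref{itm:c2}. Letting $\mathfrak{P}\subseteq 2^{\binom{n}{2}}$ denote the graph property $\iso(G)<1/40$, for any integer $1\leq x\leq n$ the coupling yields
\[ \Pr{\cp(n,q)\in\mathfrak{P}} \;\leq\; n^{x}\cdot \Pr{\cp(n,1)\in\mathfrak{P}}\;+\;\Pr{X\geq x}, \]
where $X=\sum_{i=1}^n X_i$ with independent $X_i\sim \ber{||\nu_{i,q}-\nu_{i,1}||_{\mathsf{TV}}}$. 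I would pick $x$ of order $n/\log n$ so that both summands are $\lo{1}$.

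For the first summand, Lemma \ref{lem:expander} supplies the exponential-in-$n$ bound $\Pr{\cp(n,1)\in\mathfrak{P}}\leq 1000\,n^{7/2}(3/4)^n$. Hence any choice $x\leq c\,n/\log n$ with a constant $c<\log(4/3)\approx 0.287$ makes $n^{x}\cdot\Pr{\cp(n,1)\in\mathfrak{P}} = \exp(x\log n - n\log(4/3) + \BO{\log n}) = e^{-\Omega(n)}$. For concreteness I would take $x:=\lceil n/(10\log n)\rceil$.

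For the second summand I would invoke Lemma \ref{lem:TVdist}. The hypothesis $1-q\leq 1/(100n\log n)$ ensures $1-q\leq 1/(4k)$ for every $k\leq n$, so that lemma gives $\Ex{X_i}\leq 3i(1-q)\leq 3n(1-q)\leq 3/(100\log n)$, and hence $\mu:=\Ex{X}\leq 3n/(100\log n)$. With $x:=\lceil n/(10\log n)\rceil$ this makes $x/\mu\geq 10/3$, so Lemma \ref{Chertail} applied with $1+\delta=x/\mu$ yields $\Pr{X\geq x}\leq (e\mu/x)^{x}\leq (3e/10)^{x} = e^{-\Omega(n/\log n)} = \lo{1}$.

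The delicate point — and what I expect to be the only real obstacle — is that the wide range $q\geq 1-1/(100n\log n)$ is possible only because Lemma \ref{lem:expander} supplied \emph{exponential}-in-$n$ decay at $q=1$: that is what lets us absorb an $n^{x}$ loss in the coupling with $x$ as large as $\Theta(n/\log n)$, and $x$ on this scale is precisely what the Chernoff estimate needs in order to dominate $\Ex{X}=\Theta(n/\log n)$. Had only Kim–Wormald-style $\lo{1}$ concentration at $q=1$ been available, one would be forced to take a much smaller $x$, and correspondingly impose a much tighter hypothesis on $1-q$.
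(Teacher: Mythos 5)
Your proposal is correct and follows essentially the same route as the paper: couple $\cp(n,q)$ to $\cp(n,1)$ via Lemma \ref{lem:largeqpatterncouple}\ref{itm:c2}, bound $\Ex{X}=\BO{n/\log n}$ using Lemma \ref{lem:TVdist}, control $\Pr{X\geq x}$ with the Chernoff bound of Lemma \ref{Chertail}, and absorb the $n^{x}$ loss using the exponential bound of Lemma \ref{lem:expander}. The only differences are cosmetic choices of constants ($x=\lceil n/(10\log n)\rceil$ and a large-deviation form of Chernoff versus the paper's $\delta=1/2$ with $x=\tfrac{3cn}{2\log n}$, $c=3/100$), and your closing remark about why exponential decay at $q=1$ is essential matches the paper's own discussion.
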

 \begin{proof}Let $B = \{G : \iso(G)\leq 1/40 \}\subseteq 2^{\binom{n}{2}}$. Then Lemmas \ref{lem:largeqpatterncouple}\ref{itm:c2} and \ref{lem:expander} give 
 	\[\Pr{\cp(n,q)\in B}  \leq \mathrm{e}^{9n^2(1-q)}\cdot \Pr{\cp(n,1)\in B}   \leq \mathrm{e}^{n/5} \cdot 1000n^{7/2}\left( \frac34\right)^{n} = o(1),    \] since $\ln(3/4) <-1/4$. 
 \end{proof}

 We note that the constant $1/50$ in the assumption on $q$ in Lemma \ref{treebanddiam1} has not been optimised. However, we believe that $q=1-\Theta(1/n)$ should be the threshold for $\mathcal{P}(n,q)$ being an expander.

\section{Cut Vertices and Diameter}\label{sec:sep}

The main aim of this section is to prove Theorem \ref{prop:sep} which shows that $q= 1- \frac{\pi^2}{6\log n }$ is a sharp threshold for the existence of a cut vertex. This is achieved by equating the existence of a cut vertex to a combination of flush events, then bounding the number of cut vertices. We then use bounds on the number of cut vertices obtained while proving Theorem \ref{prop:sep} to establish Theorem \ref{diam}, which shows that the diameter is linear when $0<q<1$ is fixed.

\subsection{Relating Unit Separators to Flush Events}

Given a graph $G=(V,E)$ we say that a vertex $v$ is a cut vertex in $G$ if its removal separates the graph into two or more disjoint components. The aim of this section is to prove Lemma \ref{lem:seplem} which shows that cut vertices are determined by flush events. 

Let $1\leq k\leq n$ be integers and $v_i\sim\nu_{i,q}$ for all $i\in [n]$. Recall the flush event given by \eqref{eq:flushingevent}: 
\begin{equation*}
\mathcal{F}_k= \left\{\text{For each }i > k\text{ we have }v_i\leq i-k \right\}. 
\end{equation*}
For an integer $ k \in [n]$ we define also define the \textit{reverse flush} event by
\begin{equation*}
\mathcal{R}_k= \left\{ \text{for each }i > k\text{ we have }v_i>k \right\}. 
\end{equation*}
The name we give to this event is quite fitting: Suppose the flush event $\cf_k$ holds in a permutation $\sigma$ and let $\sigma^R$ be the reverse of $\sigma$.  Then, the reverse flush event $\mathcal{R}_k$ holds for $\sigma^R $.

 We define two events $\mathcal{C}_k^{\mathcal{F}}$ and $\mathcal{C}_k^{\mathcal{R}}$ which stipulate that $k$ separates all elements $i>k$ from all elements $i<k$ in $\sigma$. In $\mathcal{C}_k^{\mathcal{F}}$ the elements $i>k$ are to the left of $k$ in $\sigma$, and in $\mathcal{C}_k^{\mathcal{R}}$ they are to the right of $k$ in $\sigma$ (see Figure \ref{fig:separator}). These events are disjoint for $n>1$, and are given by  \begin{equation}\label{eq:E1and2}
\mathcal{C}_k^{\mathcal{F}}=\cf_k  \cap \{ v_k =1 \} \qquad \text{and} \qquad \mathcal{C}_k^{\mathcal{R}}=\mathcal{R}_k  \cap \{ v_k =k \}.
\end{equation} 
Our next result shows that together these two events characterise cut vertices in $\cp(n,q)$.  

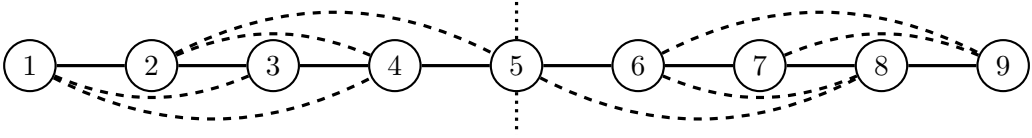
\begin{figure}  
	\center\begin{tikzpicture}[xscale=0.8,yscale=0.7,knoten/.style={thick,circle,draw=black,minimum size=.1cm,fill=white},edge/.style={black, very thick},oedge/.style={orange,very thick}, bedge/.style={dashed,very thick},gedge/.style={green,very thick}, dotedge/.style={dotted,very thick}]
	\foreach \x in {1,...,9}
	\node[knoten] (\x) at (2*\x,5) {$\x$};
	\node (u) at (10,6.5) {} ;
	\node (d) at (10,3.5) {};
	
	\draw[edge] (1) to (2);
	\draw[edge] (2) to (3);
	\draw[edge] (3) to (4);
	\draw[edge] (4) to (5);
	\draw[edge] (5) to (6); 
	\draw[edge] (6) to (7); 
	\draw[edge] (7) to (8);
	\draw[edge] (8) to (9);
	
	\draw[dotedge] (5) to (u);
	\draw[dotedge] (5) to (d);
	
	\draw[bedge] (7) to[out=25,in=155 ] (9);
	\draw[bedge] (6) to[out=30,in=150 ] (9);
	\draw[bedge] (6) to[out=-25,in=-155 ] (8);
	\draw[bedge] (5) to[out=-30,in=-150 ] (8);
	\draw[bedge] (2) to[out=30,in=150 ] (5);
	\draw[bedge] (2) to[out=25,in=155 ] (4);
	\draw[bedge] (1) to[out=-30,in=-150 ] (4);
	\draw[bedge] (1) to[out=-25,in=-155 ] (3);
 
	\end{tikzpicture}
	\vspace*{-3mm}
	
	\caption{\label{fig:separator}A representation of the event $\mathcal{C}_5^\cf$ from \eqref{eq:E1and2} holding in the graph $\layer(P_9,\sigma(P_9))$. In this example $\sigma = (7,9,6,8,5,2,4,1,3)$ was generated by the sequence $\mathbf{x}=(1,1,3,2,1,1,1,3,2)$ satisfying $\mathcal{C}_5^\cf$. Observe that vertex $5$ is a cut vertex.}
\end{figure}	

	\begin{lemma}\label{lem:seplem} Let $n\geq 3$ be an integer. Then, a vertex $k \in \{2,\dots,n-1\}$ is a cut vertex in $\mathcal{P}(n,q)$, if and only if one of the disjoint events $\mathcal{C}_k^{\mathcal{F}}$ or $\mathcal{C}_k^{\mathcal{R}}$ holds.  
\end{lemma}

\begin{proof}We consider the tangled path $\layer(P_n,r_n(P_n))$ with $r_n\sim \mu_{n,q}$ generated from the random sequence $(v_i)_{i=1}^n$, where $v_i\sim\nu_{i,q}$, via the $q$-Mallows process. Recall from \eqref{eq:MallowsProc-1} that given $(v_i)_{i=1}^n$ we generate the sequence $(r_i)_{i=1}^n$ inductively by the rule
	\begin{equation*}
	r_{i}(j) =
	\begin{cases}
	r_{i-1}(j)& j < v_i\\
	i& j=v_i\\
	r_{i-1}(j-1) & i > v_j 
	\end{cases} \qquad (1 \le j \le i). \end{equation*}
	
 First assume that $\mathcal{C}_k^{\mathcal{F}}$ holds for some $k \in \{2,\dots,n-1\}$. Conditional on $\{ v_k =1 \}$ we have $r_k(1)=k$ and $r_k(i)< k$ for all $i>1$. Once $k$ is inserted then, conditional on $\cf_k$, each $i>k$ the element $i$ is inserted to the left of $k$ in the permutation $(r_{i-1})$. It follows that, conditional on $\mathcal{C}_k^{\mathcal{F}}$, the final permutation $r_n$ satisfies $r_k(i)> k$ for all $i\leq n-k$, $r_n(n-k+1)=k$, and $r_n(j)<k$ for all $j>n-k+1$. Thus, since no element $i>k$ is adjacent to any element $j<k$ in $r_n$ and thus $k$ is a cut vertex of $\cp$. See Figure \ref{fig:separator} for an example.  We now assume $\mathcal{C}_k^{\mathcal{R}}$ holds. Conditional on $\{ v_k =k \}$ we have $r_k(k)=k$ and $r_k(i)< k$ for all $i<k$. Then, conditioning on $\mathcal{R}_k$, each element $i>k$ is inserted to the right of $k$. So, conditional on $\mathcal{C}_k^{\mathcal{R}}$, we have $r_k(i)< k$ for all $i<k$, $r_n(k)=k$ and $r_n(j)>k$ for all $j>k$. Thus, as before, $k$ is a cut vertex of $\cp$. 
	
 For the other direction suppose $k \in \{2,\dots,n-1\}$ is a cut vertex. Then, since $\cp$ contains a path on $[n]$ as a subgraph, $k$'s removal must separate the graph into vertex sets $\{1, \dots, k-1 \}$ and $\{k+1, \dots, n \}$ and there can be no element $j>k$ adjacent to any $i<k$ in $r_n$. By the definition of $(r_i)_{i=1}^n$, at the time $k$ is inserted all elements $i< k$ have been inserted. If $k$ is a cut then either $\{v_k=1 \}$ or $\{v_k=k\}$ holds, since otherwise when $k+1$ is inserted it will be adjacent to at least one element $j<k$. Now if $\{v_k=1 \}$ holds then all elements $i>k$ must be inserted to the left of $k$, this is precisely the event $\cf_k $. Otherwise if $\{v_k=k\}$ holds then all $i>k$ must be inserted right of $k$, this gives $\mathcal{R}_k$. Hence, if $k$ is a cut then either $\mathcal{C}_k^{\mathcal{F}}$ or $\mathcal{C}_k^{\mathcal{R}}$ holds.\end{proof}

\subsection{Bounds on the Probability of Flush Events}\label{sec:flusprobs}

 Our first result in this section gives algebraic expressions for $\Pr{\mathcal{F}_k}$ and $\Pr{\mathcal{R}_k} $.

\begin{lemma}\label{lem:expforF_k}For integers $1\leq k\leq n$, and $0<q< 1$, we have \[\Pr{\mathcal{F}_k}= \prod_{i=1}^{k}\frac{1-q^{i}}{1-q^{n-k+i}}\qquad \text{and} \qquad \Pr{\mathcal{R}_k}=  q^{k(n-k)}\cdot \Pr{\cf_k}.\]
\end{lemma}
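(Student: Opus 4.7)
The plan is to exploit independence of the inputs $(v_i)_{i=1}^n$ to the Mallows process, then evaluate the resulting products using the closed-form tail probabilities for $\nu_{i,q}$ recorded earlier in \eqref{eq:nuupperbdd} and \eqref{eq:Probv_klowerthani}.

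First I would handle $\Pr{\cf_k}$. Since $\cf_k$ is the intersection over $i>k$ of the independent events $\{v_i\le i-k\}$, independence gives
\[
\Pr{\cf_k}=\prod_{i=k+1}^{n}\Pr{v_i\le i-k}.
\]
By \eqref{eq:Probv_klowerthani} each factor equals $\tfrac{1-q^{i-k}}{1-q^i}$, so substituting $j=i-k$ yields
\[
\Pr{\cf_k}=\prod_{j=1}^{n-k}\frac{1-q^{j}}{1-q^{j+k}}
=\frac{\prod_{j=1}^{n-k}(1-q^{j})}{\prod_{j=k+1}^{n}(1-q^{j})}.
\]
The claimed formula now follows by a straightforward index rearrangement: multiplying numerator and denominator by $\prod_{j=1}^{n}(1-q^{j})$ and then cancelling the common block shows the ratio equals $\prod_{i=1}^{k}(1-q^{i})/(1-q^{n-k+i})$. (Equivalently, write both numerator and denominator as $\prod_{j=1}^{n}(1-q^j)$ divided by the appropriate complementary block.)

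For $\Pr{\mathcal{R}_k}$ the strategy is identical, but using the upper tail of $\nu_{i,q}$. Independence of the $v_i$ gives
\[
\Pr{\mathcal{R}_k}=\prod_{i=k+1}^{n}\Pr{v_i>k},
\]
and \eqref{eq:nuupperbdd} (with $x=k+1$) yields $\Pr{v_i>k}=q^{k}\cdot\tfrac{1-q^{i-k}}{1-q^{i}}$. Pulling the factor $q^{k}$ out of each of the $n-k$ terms produces the prefactor $q^{k(n-k)}$, and the remaining product is exactly the product we already identified as $\Pr{\cf_k}$, giving $\Pr{\mathcal{R}_k}=q^{k(n-k)}\,\Pr{\cf_k}$.

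There is really no obstacle here: the lemma is a direct bookkeeping computation, and the only mildly non-obvious step is the index manipulation that converts $\prod_{j=1}^{n-k}(1-q^j)/(1-q^{j+k})$ into the advertised form $\prod_{i=1}^{k}(1-q^i)/(1-q^{n-k+i})$. I would simply spell out this cancellation explicitly to make the telescoping transparent, and the $\mathcal{R}_k$ identity then drops out for free.
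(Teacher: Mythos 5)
Your proposal is correct and follows essentially the same route as the paper: factor $\Pr{\cf_k}$ and $\Pr{\mathcal{R}_k}$ over the independent inputs $v_i$, evaluate each factor via \eqref{eq:Probv_klowerthani} and \eqref{eq:nuupperbdd}, rearrange indices to obtain $\prod_{i=1}^{k}\frac{1-q^i}{1-q^{n-k+i}}$, and pull out $q^{k}$ from each of the $n-k$ factors for the reverse flush. No gaps.
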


\begin{proof}Observe that for each $i>k$ we have $\Pr{v_i\leq i- k}   = \frac{1-q^{i-k}}{1-q^i}$ by \eqref{eq:Probv_klowerthani}, thus 
	\begin{equation} \label{eq:firstprobespression} \Pr{\cf_k} = \prod_{i=k+1}^n\Pr{v_i\leq i- k}=\prod_{i=k+1}^{n} \frac{1-q^{i-k}}{1-q^i}=\left(\prod_{i=k+1}^{n} (1-q^i)\right)^{-1}\cdot \prod_{i=k+1}^{n}\left(1-q^{i-k}\right)  ,\end{equation}where the first equality holds since $(v_i)$ are independent. By shifting some indices we have 	\begin{equation*} \Pr{\cf_k}=\left(\prod_{i=1}^{k} (1-q^{n-k +i}) \cdot \prod_{i=k+1}^{n-k} (1-q^i)\right)^{-1}\cdot \prod_{i=1}^{n-k}\left(1-q^{i}\right)  = \prod_{i=1}^{k}\frac{1-q^{i}}{1-q^{n-k+i}},\end{equation*}as claimed. Now, again by \eqref{eq:nuupperbdd}, we have $\Pr{v_i> k}   = \frac{q^{k}-q^i}{1-q^i}$ for any $i>k$ and thus 	\begin{equation*}  \Pr{\mathcal{R}_k} = \prod_{i=k+1}^n\Pr{v_i> k} = \prod_{i=k+1}^{n} \frac{q^{k}-q^i}{1-q^i} = q^{k(n-k)}\prod_{i=k+1}^{n} \frac{1-q^{i-k}}{1-q^i} = q^{k(n-k)}\cdot \Pr{\cf_k}, \end{equation*} where the last equality follows from the second inequality of \eqref{eq:firstprobespression}. \end{proof}

The following technical lemma is used for obtaining  tight bounds on $\Pr{\cf_k}$. 
\begin{lemma}\label{lem:maclaurin}For any $0<q<1$, we have
	\[\frac{q\log q}{6(1-q)}\leq \sum_{i=1}^\infty \log(1-q^i) - \frac{\pi^2}{6\log q} - \frac{3\log (1-q)}{2}\leq   - \frac{1-q}{q\log q}.  \]
\end{lemma}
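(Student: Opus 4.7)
The plan is to exploit the concavity and monotonicity of $f(x) = \log(1-q^x)$ to sandwich the quantity
\[
A := \sum_{i=1}^\infty \log(1-q^i) - \frac{\pi^2}{6\log q} + \frac{\log(1-q)}{2}
\]
between integrals obtained from the midpoint and trapezoidal rules, and then to evaluate those integrals explicitly by substitution and integration by parts. Writing $a = -\log q > 0$, a direct computation gives $f'(x) = -q^x\log q/(1-q^x) > 0$ and $f''(x) = -(\log q)^2 q^x/(1-q^x)^2 < 0$, so $f$ is strictly increasing and strictly concave on $(0,\infty)$. The only non-elementary input I would use is the dilogarithm evaluation $\int_0^\infty \log(1-q^x)\,dx = \frac{1}{a}\int_0^1 \frac{\log(1-u)}{u}\,du = -\frac{\pi^2}{6a} = \frac{\pi^2}{6\log q}$, obtained via the substitution $u = q^x$.

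For the upper bound, I would apply the trapezoidal inequality $(f(i)+f(i+1))/2 \leq \int_i^{i+1} f(x)\,dx$ (valid for concave $f$) for each $i \geq 1$ and sum, using $f(i) \to 0$, to get $\sum_{i\geq 1} f(i) \leq \int_1^\infty f(x)\,dx + f(1)/2$. Substituting into the definition of $A$ and using $\int_1^\infty f = \pi^2/(6\log q) - \int_0^1 f$ yields $A \leq f(1) - \int_0^1 \log(1-q^x)\,dx$. Integration by parts on $\int_0^a \log(1-e^{-y})\,dy$ with $u = \log(1-e^{-y})$, $dv = dy$ (and noting that $y\log(1-e^{-y}) \to 0$ as $y \to 0^+$) gives the closed form
\[
\int_0^1 \log(1-q^x)\,dx = \log(1-q) - \frac{1}{a}\int_0^a \frac{y}{e^y-1}\,dy,
\]
so $A \leq \frac{1}{a}\int_0^a y/(e^y-1)\,dy$. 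The elementary inequality $e^y \geq 1+y$ (which gives both $y/(e^y-1) \leq 1$ and $a \leq e^a-1$) then delivers $A \leq 1 \leq (e^a-1)/a = -(1-q)/(q\log q)$, matching the stated upper bound.

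For the lower bound, I would use the midpoint inequality $\int_{i-1/2}^{i+1/2} f(x)\,dx \leq f(i)$ (again from concavity) for each $i \geq 1$ and sum to obtain $\sum_{i\geq 1} f(i) \geq \int_{1/2}^\infty f(x)\,dx = \pi^2/(6\log q) - \int_0^{1/2}\log(1-q^x)\,dx$. Rearranging gives $A \geq \log(1-q)/2 - \int_0^{1/2}\log(1-q^x)\,dx$; using that $f$ is increasing, $\int_0^{1/2} f \leq f(1/2)/2 = \tfrac{1}{2}\log(1-\sqrt{q})$, so
\[
A \geq \frac{1}{2}\log\frac{1-q}{1-\sqrt{q}} = \frac{1}{2}\log(1+\sqrt{q}) > 0.
\]
Since the stated lower bound $q\log q/(6(1-q))$ is negative for every $q \in (0,1)$, this is actually strictly stronger than what the lemma asks for.

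The hard part is essentially bookkeeping: choosing the substitutions and the integration by parts so that the algebraic expressions $-(1-q)/(q\log q)$ and $q\log q/(6(1-q))$ (or, as in my plan, their easier substitutes) actually appear on the right-hand side. Beyond this, the entire argument reduces to the concavity and monotonicity of $f$, together with the inequality $e^y \geq 1+y$, which suggests the constants $\tfrac16$ and the particular form of the bounds in the statement are chosen for downstream convenience rather than because the inequalities here are tight.
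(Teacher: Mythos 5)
Your proof is correct, but it follows a genuinely different route from the paper. The paper applies the Euler--Maclaurin summation formula to $f(x)=\log(1-q^x)$ on $[1,\infty)$, bounds the remainder via $f'$ (which is exactly where the term $\tfrac{q\log q}{6(1-q)}$ comes from), and then evaluates $\int_1^\infty\log(1-q^x)\,dx$ through the dilogarithm together with Euler's reflection identity $\operatorname{Li}_2(1)-\operatorname{Li}_2(q)=\log q\,\log(1-q)+\operatorname{Li}_2(1-q)$, the tail $\sum_j (1-q)^j/j^2\le (1-q)/q$ producing the stated upper error $-\tfrac{1-q}{q\log q}$. You instead sandwich the sum between integrals using only concavity (midpoint rule on $[i-\tfrac12,i+\tfrac12]$ for the lower bound, trapezoid rule on $[i,i+1]$ for the upper), evaluate $\int_0^\infty\log(1-q^x)\,dx=\tfrac{\pi^2}{6\log q}$ directly by the substitution $u=q^x$ (so only $\operatorname{Li}_2(1)=\pi^2/6$ is needed, no reflection formula), and dispose of the remaining finite integral by an elementary integration by parts plus $e^y\ge 1+y$. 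I checked the computations: $f(1)-\int_0^1 f=\tfrac1a\int_0^a\tfrac{y}{e^y-1}\,dy\le 1\le\tfrac{e^a-1}{a}=-\tfrac{1-q}{q\log q}$ with $a=-\log q$, and the midpoint step gives $A\ge\tfrac12\log(1+\sqrt q)>0$, which indeed implies the stated (negative) lower bound $\tfrac{q\log q}{6(1-q)}$; the minor convergence points (the integrable logarithmic singularity of $f$ at $0$, $f(N)\to0$ in the telescoping trapezoid sum) are all fine. What your approach buys is a cleaner and in fact stronger two-sided estimate ($0<\tfrac12\log(1+\sqrt q)\le A\le 1$) with less special-function machinery; what the paper's approach buys is error terms of the exact algebraic shape quoted in the lemma, which it then reuses verbatim in the flush-probability bounds, though your bounds would serve there just as well.
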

\begin{proof}Recall the Euler-Maclaurin summation formula \cite[Section 9.5]{concrete}: for any real function $f(x)$ with derivative $f'(x)$ and any integers $-\infty< a<b<\infty $ we have 
	\begin{equation}\label{eq:EM}\sum_{a\leq i <b} f(i)  -\int_{a}^bf(x)\, \mathrm{d}x +\frac{1}{2} f(x)\Big|_{a}^b   =  \frac{1}{12}  f'(x)\Big|_{a}^b + R_{2},  \end{equation} where $f(x)\big|_{a}^b = f(b) -f(a)$ and $R_{2}$ is real and satisfies $|R_{2}|\leq \frac{1}{12} \left| f'(x)\big|_{a}^b\right|$.  
	
	In our case we set $f(x)= \log(1-q^x)$, thus $\frac{1}{2} f(x)\Big|_{a}^b = - \frac{1}{2}\log\left(\frac{1-q^a}{1-q^b}\right)$. The derivative of $f$ is \[f'(x) = -q^x\log(q)/(1-q^x) = \log(1/q)\left(1/(1-q^x) - 1 \right)> 0.\] Notice that $f'(x)$ is decreasing in $x$, thus as $f'(a)>f'(b)$ since $b>a$, we have  \[  0\geq  \frac{1}{12}  f'(x)\Big|_{a}^b + R_{2}\geq  \frac{1}{6}f'(x)\Big|_{a}^b = \frac{q^a\log q}{6(1-q^a)} -  \frac{q^b\log q}{6(1-q^b)}.    \]So by \eqref{eq:EM} and setting $a=1$, for any $b\geq 1$ we have
	\begin{align*}   \frac{q\log q}{6(1-q)} - \frac{q^b\log q}{6(1-q^b)} \leq \sum_{1\leq i <b} \log(1-q^i) - \int_{1}^b  \log(1-q^x)\,\mathrm{d}x - \frac{1}{2}\log \left(\frac{1-q}{1-q^b}\right)\leq  0. \end{align*}   
	Now if we take $b\rightarrow \infty$ then the integral and sum converge since $q<1$. Thus, we have
	\begin{align}\label{eq:firstlogbdd} \frac{q\log q}{6(1-q)} \leq \sum_{i=1}^\infty  \log(1-q^i) - \int_{1}^\infty  \log(1-q^x)\,\mathrm{d}x - \frac{\log (1-q)}{2}\leq  0. \end{align}   
Observe that the substitution $y= q^x$ yields
	\begin{equation}\label{eq:intbdd} \int_1^\infty \log(1-q^x) \, \mathrm{d} x = \frac{1}{\log q} \int_0^{q} \frac{\log(1-y)}{y} \, \mathrm{d} y. \end{equation}
	The right-hand side of \eqref{eq:intbdd} contains the  Dilogarithm function $\operatorname{Li}_2(q)$ given by \begin{equation}\label{eq:dilog}\operatorname{Li}_2(q) := \int_0^{q} \frac{\log(1-y)}{y} \, \mathrm{d} y = \sum_{j=1}^\infty\frac{q^j}{j^2},\qquad \text{for $0\leq q\leq 1$},\end{equation} where the second equality is by \cite[(1.3)]{Lewin}, thus  $\operatorname{Li}_2(1) = \pi^2/6$. Integration by parts gives 
	\begin{equation*}
			\int_0^{q} \frac{\log(1-y)}{y} \, \mathrm{d} y = \ln(y)\ln(1-y)\big|_0^q + 	\int_0^{q} \frac{\log(y)}{1-y} \, \mathrm{d} y =\ln(q)\ln(1-q) +  \int_{1-q}^{1} \frac{\log(1-x)}{x} \, \mathrm{d} x, 
		\end{equation*}where we used the substitution $x=1-y$. By \eqref{eq:dilog} and the above we have
	\begin{equation*}
	\int_0^{q} \frac{\log(1-y)}{y} \, \mathrm{d} y =\ln(q)\ln(1-q) +  \operatorname{Li}_2(1) -\operatorname{Li}_2(1-q)  =\ln(q)\ln(1-q) + \frac{\pi^2}{6}- \sum_{j=1}^\infty\frac{(1-q)^j}{j^2}. 
\end{equation*}	Note that $0\leq \sum_{j=1}^\infty\frac{(1-q)^j}{j^2} \leq\sum_{j=1}^\infty (1-q)^j= \frac{1-q}{q}$. The result then follows from \eqref{eq:firstlogbdd}.
\end{proof}

We can now apply this approximation to prove bounds on  $\Pr{\cf_k}$. 
\begin{lemma}\label{lem:flushlem}For any integers $1\leq k\leq n$, and $0<q< 1$,  we have \[ \frac{q\log q}{6(1-q)} \leq \log \Pr{\cf_k} - \frac{\pi^2}{6\log q} - \frac{3\log (1-q)}{2} \leq   \frac{2q^{\min\{n-k, k\}}}{(1-q)(1-q^{\min\{n-k, k\}})}  - \frac{1-q}{q\log q}.\] 
\end{lemma}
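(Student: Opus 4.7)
My plan is to write $\log\Pr{\cf_k}$ in terms of the infinite series $A:=\sum_{i=1}^\infty \log(1-q^i)$ that Lemma~\ref{lem:maclaurin} already controls, and then estimate the ``tail discrepancy'' using the logarithm bound \eqref{logbdd}.

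Taking logarithms of the product formula from Lemma~\ref{lem:expforF_k} gives
\[
\log\Pr{\cf_k}=\sum_{i=1}^{k}\log(1-q^i)-\sum_{j=n-k+1}^{n}\log(1-q^j)=A-B,
\]
where $B:=\sum_{i=k+1}^{\infty}\log(1-q^i)+\sum_{j=n-k+1}^{n}\log(1-q^j)$. Since every term in $B$ is nonpositive, we have $-B\ge 0$, so
\[
\log\Pr{\cf_k}\;\ge\;A\;\ge\;\frac{\pi^2}{6\log q}-\frac{\log(1-q)}{2}+\frac{q\log q}{6(1-q)},
\]
the last inequality being the lower bound in Lemma~\ref{lem:maclaurin}. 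This immediately yields the desired lower bound on $\log\Pr{\cf_k}-\tfrac{\pi^2}{6\log q}+\tfrac{\log(1-q)}{2}$.

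For the upper bound I combine the upper bound in Lemma~\ref{lem:maclaurin} (which handles $A$) with an explicit estimate of $-B$. Writing $\log\frac{1}{1-q^i}=\log\!\bigl(1+\tfrac{q^i}{1-q^i}\bigr)\le \tfrac{q^i}{1-q^i}$ by \eqref{logbdd}, and summing geometric series, I obtain
\[
-B\;\le\;\sum_{i=k+1}^{\infty}\frac{q^i}{1-q^i}+\sum_{j=n-k+1}^{n}\frac{q^j}{1-q^j}\;\le\;\frac{q^{k+1}}{(1-q)(1-q^{k+1})}+\frac{q^{n-k+1}}{(1-q)(1-q^{n-k+1})}.
\]
Setting $m:=\min\{k,n-k\}$, each of these two terms is at most $\tfrac{q^{m+1}}{(1-q)(1-q^{m+1})}$, and the elementary inequality $\tfrac{q^{m+1}}{1-q^{m+1}}\le\tfrac{q^m}{1-q^m}$ (equivalent to $q\le 1$) gives $-B\le \tfrac{2q^m}{(1-q)(1-q^m)}$. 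Adding this to the upper bound on $A$ from Lemma~\ref{lem:maclaurin} yields
\[
\log\Pr{\cf_k}-\frac{\pi^2}{6\log q}+\frac{\log(1-q)}{2}\;\le\;\frac{2q^{m}}{(1-q)(1-q^{m})}-\frac{1-q}{q\log q},
\]
as required.

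The only nontrivial step is the tail estimate: one has to notice that after peeling off $A$, the remaining contribution $-B$ splits into a single geometric tail at index $k+1$ plus a block of $k$ terms starting at $n-k+1$, and that both blocks are governed by the same quantity $\min\{k,n-k\}$. Everything else is routine manipulation of the bound \eqref{logbdd} and the approximation already provided by Lemma~\ref{lem:maclaurin}.
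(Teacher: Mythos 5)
Your proposal is correct and follows essentially the same route as the paper: both decompose $\log\Pr{\cf_k}$ into the full series $\sum_{i=1}^\infty\log(1-q^i)$ (controlled by Lemma~\ref{lem:maclaurin}) minus the two nonpositive tail blocks, bound the tails via $\log(1-x)\geq -x/(1-x)$ and geometric sums, and then absorb both tails into the single term indexed by $\min\{k,n-k\}$. No gaps.
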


\begin{proof}  By Lemma \ref{lem:expforF_k} we have $\Pr{\mathcal{F}_k}= \prod_{i=1}^{k}\frac{1-q^{i}}{1-q^{n-k+i}}$ for any $1\leq k\leq n$. Thus	\begin{equation}\label{eq:logP} \begin{aligned} \log \Pr{\cf_k} &=  \sum_{i=1}^{k}\log \left(1-q^i \right)  - \sum_{i=1}^{k}\log\left(1-q^{n-k+i} \right) \\ &=  \sum_{i=1}^{\infty} \log \left(1-q^i \right)  - \sum_{i=1}^{\infty} \log \left(1-q^{k +i } \right)  -\sum_{i=1}^{k} \log\left(1-q^{n-k+i} \right). \end{aligned}\end{equation} Now, since $\log(1-x)  \geq - x/(1-x) $ for all $x<1$, we have \[0 \leq - \sum_{i=1}^{k}\log\left(1-q^{n-k+i} \right)\leq \sum_{i=1}^{k}\frac{q^{n-k+i}}{1-q^{n-k+i}}\leq \frac{1}{1-q^{n-k+1}}\sum_{i=1}^{k}q^{n-k+i}\leq \frac{q^{n-k+1}}{(1-q)(1-q^{n-k+1})},  \] and similarly $0\leq - \sum_{i=1}^{\infty} \log \left(1-q^{k +i } \right)\leq \frac{q^{k+1}}{(1-q)(1-q^{k+1})}  $.  Thus 
	\begin{equation*}
	0\leq -\sum_{i=1}^{k}\log\left(1-q^{n-k+i} \right) - \sum_{i=1}^{\infty} \log \left(1-q^{k +i } \right) \leq \frac{2q^{\min\{n-k, k\}}}{(1-q)(1-q^{\min\{n-k, k\}})} .
	\end{equation*}	The result now follows by inserting these bounds into \eqref{eq:logP} and then using Lemma \ref{lem:maclaurin} to bound the remaining sum in \eqref{eq:logP}.
\end{proof}
The following lemma gives a better bound on $\Pr{\cf_k}$ than Lemma \ref{lem:flushlem} when $q$ is close to $1$.

\begin{lemma}\label{lem:cheapflush}For any integers $1\leq k\leq n$,  and $0<q< 1$, we have \[\Pr{\cf_k}\leq \exp\left( -  \frac{q \left(1-q^{\min\{k,n-k \}} \right)}{2(1-q)}\right).\]
\end{lemma}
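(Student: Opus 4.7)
The plan is to work from the explicit product formula $\Pr{\cf_k} = \prod_{i=1}^{k} \frac{1-q^i}{1-q^{n-k+i}}$ from Lemma \ref{lem:expforF_k}, and obtain the desired bound by estimating each factor carefully. A direct application of $\log(1-x) \leq -x$ to each factor yields $\log \Pr{\cf_k} \leq -\sum_{j=1}^{k} \frac{q^{j}(1-q^{n-k})}{1-q^{n-k+j}}$, and bounding the denominator trivially by $1$ gives $\log\Pr{\cf_k} \leq -\frac{q(1-q^k)(1-q^{n-k})}{1-q}$. Unfortunately this ``symmetric'' bound is too weak when $q$ is close to $1$: one wants only the factor $(1-q^{\min\{k,n-k\}})$ to appear, and has effectively lost an extra factor of $(1-q^{\max\{k,n-k\}})$ which may be tiny.

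The first step is to exploit a symmetry to reduce to the product with fewer factors. A short cancellation in the expression from Lemma \ref{lem:expforF_k} gives the manifestly symmetric form
\[\Pr{\cf_k} = \frac{\prod_{i=1}^k(1-q^i)\cdot \prod_{j=1}^{n-k}(1-q^j)}{\prod_{\ell=1}^n(1-q^\ell)},\]
from which $\Pr{\cf_k}=\Pr{\cf_{n-k}}$ is apparent. We may therefore assume WLOG that $k = m := \min\{k,n-k\}$ and set $M := \max\{k,n-k\}=n-k$, so that $\Pr{\cf_k} = \prod_{j=1}^{m} \frac{1-q^j}{1-q^{M+j}}$.

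The main step is to bound the denominator $1-q^{M+j}$ more tightly than by $1$. The algebraic identity $1-q^{M+j} = (1-q^M) + q^M(1-q^j)$, together with $q^j \geq q^M$ (valid since $j\leq m \leq M$) and $q^M\leq 1$, yields $1-q^{M+j} \leq 2(1-q^M)$. Substituting into the earlier bound then gives $\frac{q^j(1-q^M)}{1-q^{M+j}} \geq \frac{q^j}{2}$, so summing produces
\[\log \Pr{\cf_k} \leq -\sum_{j=1}^m \frac{q^j(1-q^M)}{1-q^{M+j}} \leq -\sum_{j=1}^m\frac{q^j}{2} = -\frac{q(1-q^m)}{2(1-q)},\]
which is the claim after exponentiation.

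The main obstacle is the denominator estimate: the naive inequality $1-q^{M+j} \leq 1$ forces the numerator's factor $(1-q^M)$ to survive into the final bound, producing a product $(1-q^m)(1-q^M)$ which is catastrophically small whenever $q^M$ is close to $1$ (e.g.\ when $q$ is near $1$ and $M$ is moderate). The identity $1-q^{M+j} = (1-q^M) + q^M(1-q^j)$ combined with the ordering $j\leq M$ is what allows exactly one copy of $(1-q^M)$ to cancel against the denominator, leaving the clean bound. Everything else is routine.
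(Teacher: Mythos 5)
Your proof is correct and follows essentially the same route as the paper's: both work with the same product of truncated-geometric tail probabilities (your formula from Lemma \ref{lem:expforF_k} is exactly the paper's $\prod_{i>k}\Pr{v_i\leq i-k}$ reindexed), bound the logarithm of each factor by the negative of its complement, and recover a geometric series while paying only a factor $2$ for the awkward denominators. The sole organisational difference is that you extract the factor $2$ per term, via the identity $1-q^{M+j}=(1-q^M)+q^M(1-q^j)\leq 2(1-q^M)$ after a symmetry reduction to $k=\min\{k,n-k\}$, whereas the paper bounds every denominator by $1-q^n$ and only at the end uses $1-q^n=(1+q^{n/2})(1-q^{n/2})\leq 2\left(1-q^{\max\{k,n-k\}}\right)$.
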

\begin{proof} By \eqref{eq:Probv_klowerthani}, $\Pr{v_i\leq i- k}   =  1- \frac{q^{i-k}-q^i}{1-q^i}$. Independence of the random variables $v_i$ yields	\begin{align*}\Pr{\cf_k} &= \prod_{i=k+1}^n\Pr{v_i\leq i- k}=\prod_{i=1}^{n-k} \left( 1- \frac{q^{i}-q^{k+i}}{1-q^{k+i}}\right)\leq \prod_{i=1}^{n-k} \exp\left(- \frac{q^{i}-q^{k+i}}{1-q^{k+i}}\right),\end{align*}where the last inequality follows since $ 1+x\leq \mathrm{e}^x$ for all real $x$. This gives the following, where in the last step we apply the expression for the sum of a geometric series:\begin{align*}  \Pr{\cf_k} &\leq \exp\left(- \sum_{i=1}^{n-k} \frac{q^{i}-q^{k+i}}{1-q^{k+i}}  \right)\leq \exp\left(- \frac{1-q^{k}}{1-q^{n}} \sum_{i=1}^{n-k}q^{i} \right)=\exp\left(- \frac{q(1-q^{k})(1-q^{n-k})}{(1-q)(1-q^{n})} \right).\end{align*} Observe that $1-q^n = (1+q^{ n/2 })(1-q^{  n/2 })$ and $\max\{k,n-k\}\geq  n/2   $, thus  \begin{align*} 
	\Pr{\cf_k}	&\leq \exp\left(- \frac{q}{1-q} \cdot \frac{(1-q^{k})(1-q^{n-k})}{(1+q^{ n/2 })(1-q^{  n/2 })}\right) \leq\exp\left(- \frac{q}{1-q}\cdot  \frac{1-q^{\min\{n-k,k\}}}{1+q^{ n/2 }} \right),
	\end{align*} and finally $\Pr{\cf_k} \leq \exp\left(- \frac{q(1-q^{\min\{n-k,k\}})}{2(1-q)}\right)$ since $1+q^{ n/2  }\leq 2$. 
\end{proof}

\subsection{Bounds on the Number of Cuts}\label{sec:cutcount}

For $1/2< \alpha <1$ we let the random variable $X_n(\alpha)$ denote the number of vertices $k\in [\lceil (1-\alpha)n\rceil,\lfloor \alpha n\rfloor   ]$ which are cut vertices in $\cp(n,q)$. We have the following by Lemma \ref{lem:seplem}.

\begin{corollary}\label{cor:sepcor} Let $n\geq 1$ be an integer,  $1/2<\alpha <1$, and $0\leq q\leq 1$. Then \[X_n(\alpha) =\sum_{k = \lceil (1-\alpha)n\rceil}^{\lfloor \alpha n\rfloor} \left(\mathbf{1}_{\mathcal{C}_{k}^{\mathcal{F}}}+ \mathbf{1}_{\mathcal{C}_{k}^{\mathcal{R}}}\right).\]
\end{corollary}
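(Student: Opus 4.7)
The plan is to obtain this as an immediate consequence of Lemma \ref{lem:seplem}, since all the real work characterising cut vertices in terms of flush events has already been carried out there. By the definition of $X_n(\alpha)$ we may write it as
\[X_n(\alpha) \;=\; \sum_{k = \lceil (1-\alpha)n\rceil}^{\lfloor \alpha n\rfloor} \mathbf{1}_{\{k\text{ is a cut vertex of }\cp(n,q)\}},\]
so it suffices to rewrite each indicator in terms of the flush-based events.

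First I would check the indexing: since $1/2<\alpha<1$, for $n$ large enough the summation range $[\lceil(1-\alpha)n\rceil,\lfloor\alpha n\rfloor]$ is contained in $\{2,\ldots,n-1\}$, where Lemma \ref{lem:seplem} applies. For the few small $n$ for which the range might touch $\{1,n\}$, this causes no issue: the vertices $1$ and $n$ are endpoints of $P_n\subseteq \cp(n,q)$, so their removal never disconnects the graph, and moreover for these endpoint values the events $\mathcal{C}_k^\cf$ and $\mathcal{C}_k^\mathcal{R}$ can be checked directly to be consistent with the identity (in particular, when the summation range is empty both sides are zero).

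Next I would invoke Lemma \ref{lem:seplem} for each $k\in\{2,\dots,n-1\}$ in the summation range, which gives
\[\{k\text{ is a cut vertex of }\cp(n,q)\} \;=\; \mathcal{C}_k^{\mathcal{F}}\,\cup\,\mathcal{C}_k^{\mathcal{R}},\]
with the union disjoint. Disjointness immediately yields
\[\mathbf{1}_{\{k\text{ is a cut vertex}\}} \;=\; \mathbf{1}_{\mathcal{C}_k^{\mathcal{F}}} + \mathbf{1}_{\mathcal{C}_k^{\mathcal{R}}},\]
and summing this identity over $k$ in the relevant range produces the claimed expression for $X_n(\alpha)$.

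There is no serious obstacle: this corollary is essentially a repackaging of Lemma \ref{lem:seplem} into a form suitable for applying the first and second moment methods in Subsection \ref{sec:pfof1.3}. The only small point requiring a moment's attention is the disjointness of $\mathcal{C}_k^{\mathcal{F}}$ and $\mathcal{C}_k^{\mathcal{R}}$ (which holds for $n>1$ since $\{v_k=1\}\cap\{v_k=k\}=\varnothing$ when $k\geq 2$), so that indicators of unions decompose additively rather than by inclusion–exclusion.
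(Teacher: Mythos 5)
Your proposal takes the same route as the paper, which simply records this corollary as an immediate consequence of Lemma \ref{lem:seplem}: by that lemma, for $k\in\{2,\dots,n-1\}$ the event that $k$ is a cut vertex equals the disjoint union $\mathcal{C}_k^{\mathcal{F}}\cup\mathcal{C}_k^{\mathcal{R}}$, so the indicator decomposes additively and summing over the range gives the identity. That is exactly the intended argument and it is correct whenever $[\lceil(1-\alpha)n\rceil,\lfloor\alpha n\rfloor]\subseteq\{2,\dots,n-1\}$, which is the only regime in which the corollary is used.

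One caveat: your claim that for endpoint values of $k$ the events are ``consistent with the identity'' is not actually true. The upper end is harmless ($\lfloor\alpha n\rfloor\leq n-1$ since $\alpha<1$), but if $n\leq 1/(1-\alpha)$ then $k=1$ lies in the range, and there $\mathcal{C}_1^{\mathcal{F}}=\{v_1=1\}\cap\cf_1$ can hold (indeed it holds almost surely when $q=0$, and with positive probability for all $q$) while vertex $1$ is never a cut vertex, since it is an endpoint of the sub-path $P_n$. So for such small $n$ the stated equality genuinely fails and the correct statement is only the inequality $X_n(\alpha)\leq\sum_k\bigl(\mathbf{1}_{\mathcal{C}_k^{\mathcal{F}}}+\mathbf{1}_{\mathcal{C}_k^{\mathcal{R}}}\bigr)$. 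This is a defect inherited from the paper's own formulation (which asserts the identity for all $n\geq1$ without restriction), and it is immaterial for the applications in Section \ref{sec:pfof1.3}, where $n\geq(100/(1-\alpha))^5$; but you should either add that hypothesis or only claim the inequality at $k=1$ rather than assert consistency there.
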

Restricting the ranges of $q$ and $k$ in Lemma \ref{lem:flushlem} gives the following result. 
\begin{lemma}\label{lem:precisecor}Let $1/2 < \alpha < 1$, $n \geq  ( \frac{100}{1-\alpha}
	)^5$ be an integer, 
	and $0 < q \leq  1 -n^{4/5}$. Then, for any integer $ k\in [\lceil (1-\alpha)n\rceil,\lfloor \alpha n\rfloor   ]$, we have  
	\[ \mathrm{e}^{-1/6} \cdot (1-q)^{5/2}\cdot  \exp\left(-\frac{\pi^2}{6(1-q)}\right) \leq \Pr{\mathcal{C}_k^{\mathcal{F}}} \leq \mathrm{e}^{6} \cdot (1-q)^{5/2}\cdot \exp\left(-\frac{\pi^2}{6(1-q)}\right).\]
\end{lemma}  

\begin{proof}Recall $\Pr{ v_k=1 } = \frac{1-q}{1-q^k}\geq 1-q$ from \eqref{eq:MallowsDist}. Since $ k\in \big[\lceil (1-\alpha)n\rceil,\lfloor \alpha n\rfloor   \big]$, we have \begin{equation}\label{eq:bddonq^k}q^k \leq q^{\min\{k , n-k \}}\leq  (1-1/n^{4/5})^{(1-\alpha) n} \leq (\mathrm{e}^{-1/n^{4/5}})^{(1-\alpha) n} \leq \mathrm{e}^{-(1-\alpha) n^{1/5}}\leq \mathrm{e}^{-100},\end{equation}where the last step uses $n\geq (\frac{100}{1-\alpha})^5$. By independence, $\Pr{\mathcal{C}_k^{\mathcal{F}}} =  \Pr{\cf_k}\cdot \Pr{ v_k=1 }$ and so  
	\begin{equation}\label{eq:approx}(1-q) \cdot \Pr{\cf_k}  \leq \Pr{\mathcal{C}_k^{\mathcal{F}}} = \frac{1-q}{1-q^k}\cdot\Pr{\cf_k} \leq \mathrm{e} \cdot (1-q)\cdot\Pr{\cf_k}. \end{equation} By \eqref{eq:approx} it suffices to bound $\Pr{\cf_k}$. Taking $x=q-1>-1$, the bounds on $\log$ from \eqref{logbdd} give \begin{equation}\label{eq:logbdd10}\frac{1}{\log q} \geq -\frac{1}{1-q}, \quad \log q \geq  -\frac{1-q}{q}, \quad \text{and}\quad \frac{1}{\log q} \leq 1 -\frac{1}{1-q}   .\end{equation} Thus, applying \eqref{eq:logbdd10} to the bound on $\Pr{\cf_k}$ in Lemma \ref{lem:flushlem} gives 
	\begin{equation}\label{eq:downbdd10} \log \Pr{\cf_k}  \geq \frac{\pi^2}{6\log q} + \frac{3\log (1-q)}{2} +  \frac{q\log q}{6(1-q)}\geq -\frac{\pi^2}{6(1-q)} + \frac{3\log (1-q)}{2} - \frac{1 }{6 }.\end{equation}Inserting \eqref{eq:downbdd10} into \eqref{eq:approx} gives the lower bound in the statement.
	
	For the upper bound applying  \eqref{eq:logbdd10} and \eqref{eq:bddonq^k} to Lemma \ref{lem:flushlem} gives 
	\begin{align} \log \Pr{\cf_k} +\frac{\pi^2}{6(1-q)} -\frac{3\log (1-q)}{2}&\leq  \frac{\pi^2}{6}  +   \frac{2q^{\min\{k , n-k \}}}{(1-q)(1-q^{\min\{k , n-k \}})}  - \frac{1-q}{q\log q}\notag \\  &\leq  \frac{\pi^2}{6} + 3n^{4/5}\mathrm{e}^{-n^{1/5}/4} + \frac{1}{q}\notag\\
	&\leq 2 + \frac{1}{q}.\label{eq:upbdd10}\end{align} Let $f(q) = \mathrm{e}^{3+1/q}  \cdot (1-q)^{5/2}\cdot \mathrm{e}^{-\frac{\pi^2}{6(1-q)}}  $ and observe that applying \eqref{eq:upbdd10} to \eqref{eq:approx} gives the bound $\Pr{\mathcal{C}_k^{\mathcal{F}}} \leq f(q)$. Now, as $f(q)$ is monotone decreasing in $q$, for any $q\leq 1/3$ we have $f(q)\geq f(1/3) = \mathrm{e}^{6}  \mathrm{e}^{-\frac{\pi^2}{4}}(2/3)^{5/2}  >1 $. Thus since $\Pr{\mathcal{C}_k^{\mathcal{F}}}\leq \max\{f(q),1 \}$ holds and $\mathrm{e}^{3+1/q}$ is monotone decreasing in $q$ we can simplify the bound to $ \Pr{\mathcal{C}_k^{\mathcal{F}}} \leq\mathrm{e}^{6} \cdot (1-q)^{5/2}\cdot  \mathrm{e}^{-\frac{\pi^2}{6(1-q)}} $, as claimed. \end{proof}

The next lemma shows $\mathcal{C}_k^{\mathcal{R}}$ is `rarer' than $\mathcal{C}_k^{\mathcal{F}}$, except if $q=1$ where they are equiprobable. 
\begin{lemma}\label{lem:R_kbyF_k} For integers $1\leq k\leq n$, and $0<q< 1$, we have $ \Pr{\mathcal{C}_k^{\mathcal{R}}}=  q^{k(n-k+1)-1}  \Pr{\mathcal{C}_k^{\mathcal{F}}}.$
\end{lemma}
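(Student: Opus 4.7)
The plan is to expand both $\Pr{\mathcal{C}_k^{\mathcal{F}}}$ and $\Pr{\mathcal{C}_k^{\mathcal{R}}}$ via the definitions in \eqref{eq:E1and2} and exploit the fact that the random variables $(v_i)_{i=1}^n$ driving the Mallows process are independent. Concretely, the events $\cf_k$ and $\mathcal{R}_k$ depend only on $(v_i)_{i>k}$, while $\{v_k=1\}$ and $\{v_k=k\}$ depend only on $v_k$; hence by independence
\[\Pr{\mathcal{C}_k^{\mathcal{F}}} = \Pr{v_k=1}\cdot \Pr{\cf_k} \qquad \text{and} \qquad \Pr{\mathcal{C}_k^{\mathcal{R}}} = \Pr{v_k=k}\cdot \Pr{\mathcal{R}_k}.\]

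Next, I would compute the ratios of the two factors. For the single-coordinate factor, the explicit density in \eqref{eq:MallowsDist} gives $\nu_{k,q}(1)=(1-q)/(1-q^k)$ and $\nu_{k,q}(k)=(1-q)q^{k-1}/(1-q^k)$, so $\Pr{v_k=k}=q^{k-1}\Pr{v_k=1}$. For the flush factor, Lemma \ref{lem:expforF_k} has already established that $\Pr{\mathcal{R}_k}=q^{k(n-k)}\Pr{\cf_k}$.

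Multiplying these two ratios yields
\[\Pr{\mathcal{C}_k^{\mathcal{R}}} = q^{k-1}\cdot q^{k(n-k)}\cdot \Pr{v_k=1}\cdot \Pr{\cf_k} = q^{k(n-k+1)-1}\cdot \Pr{\mathcal{C}_k^{\mathcal{F}}},\]
since $k-1+k(n-k)=k(n-k+1)-1$. There is no real obstacle here: the only thing to be careful about is keeping the two sources of the factor of $q$ (one from $\nu_{k,q}(k)$ versus $\nu_{k,q}(1)$, the other from Lemma \ref{lem:expforF_k}) separate and then combining the exponents correctly.
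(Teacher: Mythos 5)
Your proposal is correct and follows essentially the same route as the paper: factor each event by independence into the $v_k$ term and the (reverse) flush term, use $\nu_{k,q}(k)=q^{k-1}\nu_{k,q}(1)$ from \eqref{eq:MallowsDist}, and apply Lemma \ref{lem:expforF_k} for $\Pr{\mathcal{R}_k}=q^{k(n-k)}\Pr{\cf_k}$. The exponent arithmetic $k-1+k(n-k)=k(n-k+1)-1$ matches the claimed identity, so there is nothing to add.
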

\begin{proof}Recall that $\Pr{v_k=j} = \frac{(1-q)q^{j-1}}{1-q^k} $ by \eqref{eq:MallowsDist}, and so $\Pr{v_k=k}= q^{k-1}\Pr{v_k=1}$. The result follows from independence and since $\Pr{\mathcal{R}_k}=  q^{k(n-k)}\cdot \Pr{\cf_k}  $ by Lemma \ref{lem:expforF_k}.    
\end{proof}

Lemma \ref{lem:lowerX_n} below shows that for a certain range of $q$ there are many cut vertices. This proves one side of Theorem \ref{prop:sep} and will also be key to the proof of Theorem \ref{diam}.

\begin{lemma}\label{lem:lowerX_n}Let $1/2<\alpha <1$, $n\geq \big(\frac{100}{\min\{1-\alpha, \;2\alpha -1\}}\big)^5$ be an integer, and $0\leq q\leq 1-n^{-4/5}$.  Then, for any $x> 0$,	\[  \quad  \Pr{ X_n(\alpha)  < \frac{ 2\alpha -1  }{10} \cdot n (1-q)^{5/2}  \mathrm{e}^{-\frac{\pi^2}{6(1-q)}}\left(1  -     \frac{1000x}{\sqrt{2\alpha -1}}\cdot \sqrt{\frac{\mathrm{e}^{\frac{\pi^2}{6(1-q)}} \log  n }{n(1-q)^{7/2} } }  \right)}\leq \frac{5}{x^2}+ \frac{1}{n^7}.\]
\end{lemma}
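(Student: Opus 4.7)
The strategy is a second moment argument, but since the flush event $\mathcal{F}_k$ is determined by all of $(v_i)_{i>k}$, and in particular is not local in the sense of Section~\ref{sec:klocal}, Lemma~\ref{secmom} cannot be applied to $X_n(\alpha)$ directly. The plan is to replace $\mathcal{C}_k^{\mathcal{F}}$ by a local surrogate using Lemma~\ref{lem:localify}, and then apply Lemmas~\ref{lem:precisecor} and~\ref{secmom}. Note first that by Corollary~\ref{cor:sepcor} and non-negativity, $X_n(\alpha)\geq X'_n(\alpha) := \sum_{k=\lceil(1-\alpha)n\rceil}^{\lfloor\alpha n\rfloor}\mathbf{1}_{\mathcal{C}_k^{\mathcal{F}}}$, so the $\mathcal{C}_k^{\mathcal{R}}$ terms can be discarded.

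Writing $\mathcal{C}_k^{\mathcal{F}} = \mathcal{B}_k\cap\mathcal{F}_k$ with $\mathcal{B}_k = \{v_k=1\}$ (a $0$-local event), Lemma~\ref{lem:localify} produces the $b$-local surrogate $\mathcal{D}_k = \{v_k=1\}\cap\mathcal{L}_k$, where $b = \lceil 8\log n/\log(1/q)\rceil$. Setting $Y_n(\alpha) = \sum_k \mathbf{1}_{\mathcal{D}_k}$, the proof of Lemma~\ref{lem:localify} in fact shows $\mathbf{1}_{\mathcal{C}_k^{\mathcal{F}}} = \mathbf{1}_{\mathcal{D}_k}$ on the event $\mathcal{E} = \bigcap_{i\in[n]}\{v_i\leq b\}$, and $\Pr{\mathcal{E}^c}\leq 2/n^7$ by \eqref{eq:probnobigv}. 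Consequently $X_n(\alpha)\geq Y_n(\alpha)$ off an event of probability at most $2/n^7$, so it suffices to give a matching lower tail bound for $Y_n(\alpha)$.

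For the mean, since $\mathcal{L}_k \supseteq \mathcal{F}_k$ one has $\Pr{\mathcal{D}_k}\geq \Pr{\mathcal{C}_k^{\mathcal{F}}}$, and Lemma~\ref{lem:precisecor} gives
\[
\Ex{Y_n(\alpha)} \;\geq\; \mathrm{e}^{-1/6}(2\alpha-1)n\sqrt{1-q}\,\exp\!\left(-\tfrac{\pi^2}{6(1-q)}\right).
\]
For the maximum second moment, $\mathcal{D}_k\subseteq \mathcal{C}_k^{\mathcal{F}}\cup\mathcal{E}^c$ combined with the upper bound of Lemma~\ref{lem:precisecor} yields
\[
M := \max_k \Ex{\mathbf{1}_{\mathcal{D}_k}^{2}} \;\leq\; \mathrm{e}^{5}\sqrt{1-q}\,\exp\!\left(-\tfrac{\pi^2}{6(1-q)}\right) + 2/n^7,
\]
where the error is negligible in the stated regime. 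Lemma~\ref{secmom}, applied with locality $\ell = b\leq \lceil 8\log n/(1-q)\rceil$ (using $\log(1/q)\geq 1-q$) and $|S|\leq (2\alpha-1)n+1$, then gives $\Pr{Y_n(\alpha) < \Ex{Y_n(\alpha)} - x\sqrt{M\,|S|\,b}}\leq 5/x^2$.

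It remains a bookkeeping exercise to rearrange this into the form in the lemma: substituting the bounds above, the ratio $x\sqrt{M|S|b}/\Ex{Y_n(\alpha)}$ is of order $(x/\sqrt{2\alpha-1})\sqrt{\mathrm{e}^{\pi^2/(6(1-q))}\log n/(n(1-q)^{3/2})}$, and the numerical slack between $\mathrm{e}^{-1/6}$ and the weaker $1/10$ in the statement comfortably absorbs the constant $\sqrt{18\mathrm{e}^{5}}$ emerging from the second moment and still leaves room for the coefficient $100$. The case in which the bracketed expression in the statement is non-positive is handled trivially by $X_n(\alpha)\geq 0$. A final union bound with $\mathcal{E}^c$ adds the $2/n^7$ term. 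The main obstacle is exactly the first step: without Lemma~\ref{lem:localify}, the events $\mathcal{C}_k^{\mathcal{F}}$ are positively correlated across essentially all of $[n]$ (each constrains the tail of $(v_i)$), and no local-covariance bound can succeed; the coupling to the $b$-local event $\mathcal{D}_k$ is precisely what turns this long-range dependence into an exponentially small error term.
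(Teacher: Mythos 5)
Your proposal is correct and follows essentially the same route as the paper: the paper likewise localises $\mathcal{C}_k^{\mathcal{F}}$ to $\mathcal{C}_k^{\mathcal{L}}=\{v_k=1\}\cap\mathcal{L}_k$ (your $\mathcal{D}_k$), bounds the mean and the maximal term via Lemma \ref{lem:precisecor}, applies Lemma \ref{secmom} with locality of order $b\le 9\log n/(1-q)$, and transfers back to $X_n(\alpha)$ at a cost of $2/n^7$ via Lemma \ref{lem:localify}, with the constants $1/10$ and $100$ absorbing the bookkeeping. The one imprecision is your claim that the additive $2/n^7$ in the bound on $M$ is ``negligible in the stated regime'': for $q$ near $1-n^{-4/5}$ the main term $\mathrm{e}^{5}\sqrt{1-q}\,\mathrm{e}^{-\pi^2/(6(1-q))}$ can be far smaller than $n^{-7}$, but in exactly that case the bracketed factor forces $x\ll 1$ (so $5/x^2>1$ and the statement is trivial), or one can instead use the multiplicative comparison $\Pr{\mathcal{C}_k^{\mathcal{L}}}\le(1+2n^{-7})\Pr{\mathcal{C}_k^{\mathcal{F}}}$ as the paper does, so this is a patchable wrinkle rather than a gap.
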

\begin{proof} To begin, by Corollary \ref{cor:sepcor} and Lemma \ref{lem:precisecor},
\[ 	\sum_{k =\lceil (1-\alpha)n\rceil}^{\lfloor \alpha n\rfloor}  \Pr{\mathcal{C}_k^{\mathcal{F}}}
\geq ((2\alpha -1)n-2)\cdot \mathrm{e}^{-1/6}   (1-q)^{5/2}  \mathrm{e}^{-\frac{\pi^2}{6(1-q)}}.\] Since $n$ satisfies $n\geq\big(\frac{100}{\min\{1-\alpha, \;2\alpha -1\}}\big)^5$, we have  \begin{equation}\label{eq:lowerESn}\Ex{X_n(\alpha)}
	\geq \frac{ 2\alpha -1  }{10} \cdot n\cdot (1-q)^{5/2}\cdot  \mathrm{e}^{-\frac{\pi^2}{6(1-q)}} .\end{equation}  

We want to show that  $X_n(\alpha)$ concentrates around $\Ex{X_n(\alpha)}$, but we cannot apply Lemma \ref{secmom} directly as $\mathcal{C}_k^{\mathcal{F}}$ is not $\ell$-local for any reasonable $\ell$. Recalling $\mathcal{L}_k= \left\{v_i\leq i-k \text{ for all } k <i \leq  k+ b_n\right\}$ from \eqref{eq:localflushingevent}, where $b_n= \left\lceil \frac{8\log n }{\log(1/q)}\right\rceil$  by \eqref{eq:b}, define the event $\mathcal{C}_k^{\mathcal{L}}=\mathcal{L}_k\cap \{v_k= 1\}$, and let\[ Y_n(\alpha)= \sum_{k =\lceil (1-\alpha)n\rceil}^{\lfloor \alpha n\rfloor}  \mathbf{1}_{ \mathcal{C}_k^{\mathcal{L}}} .\]  

We note two facts that follow from the first part of the statement in Lemma \ref{lem:localify}, the bound $\log(1/q) \geq 1-q$, and the assumption $n\geq\big(\frac{100}{\min\{1-\alpha, \;2\alpha -1\}}\big)^5$. Firstly,  $\mathcal{C}_k^{\mathcal{L}}$ is $\ell$-local to $k+ \lceil \frac{b}{2} \rceil$, where $\ell=\lceil\frac{1+b_n}{2}\rceil  \leq \frac{5\log n }{1-q} $. Secondly, $\Pr{\mathcal{C}_k^{\mathcal{F}}}\leq \Pr{\mathcal{C}_k^{\mathcal{L}}}\leq  (1+n^{-7})\Pr{\mathcal{C}_k^{\mathcal{F}}}$.  It follows that $ Y(\alpha)$ is the `localisation' of $X(\alpha)$ according to Lemma \ref{lem:localify}, where $c_i=1$ for all $i\in[n]$, also \begin{equation}\label{eq:ExXY}\Ex{X_n(\alpha)} = \sum_{k =\lceil (1-\alpha)n\rceil}^{\lfloor \alpha n\rfloor}   \Pr{\mathcal{C}_k^{\mathcal{F}}}\leq \Ex{Y_n(\alpha)} \leq (1+n^{-7})\cdot \Ex{X_n(\alpha)} \leq  2\cdot \Ex{X_n(\alpha)}.\end{equation} Recall $S$ and $M$ from Lemma \ref{secmom}. In this setting $S= \big[\lceil (1-\alpha)n\rceil,\lfloor \alpha n\rfloor   \big]$ and, by Lemma \ref{lem:precisecor}, \[M=\max_{k \in S}\Pr{\mathcal{C}_k^{\mathcal{L}}} \leq (1+n^{-7})\cdot \max_{k \in S}\Pr{\mathcal{C}_k^{\mathcal{F}}}\leq 2\cdot \mathrm{e}^{6} (1-q)^{5/2}\mathrm{e}^{-\frac{\pi^2}{6(1-q)}}. \]
Since $10^2\mathrm{e}^{6}\left(1-\frac{2}{(2\alpha -1)n}\right)^{-1} \leq 250^2 $, we have the following by comparison with \eqref{eq:lowerESn}:
	\begin{equation}\label{eq:MLnbdd}M\cdot |S|\cdot \ell\leq  2 \mathrm{e}^{6}(1-q)^{5/2}\mathrm{e}^{-\frac{\pi^2}{6(1-q)}}  \cdot \left( 2\alpha-1\right) n \cdot \frac{5\log n }{1-q}  \leq \frac{250^2\log n }{ 1 -q}\cdot  \Ex{X_n(\alpha)}.\end{equation}
	Finally, for any $x>0$, by Lemma \ref{secmom}, \eqref{eq:ExXY}, and \eqref{eq:MLnbdd}\begin{align}\label{eq:conccuts}\Pr{ Y_n(\alpha) < \frac{\Ex{ X_n(\alpha) }}{2}  -    \sqrt{\frac{(250x)^2\log n }{1-q}\cdot \Ex{  X_n(\alpha) } }  }  &\leq \Pr{ Y_n(\alpha) <  \Ex{Y_n(\alpha)} -  x \cdot \sqrt{M\cdot |S|\cdot \ell} }\notag\\
		&\leq 5/x^2 . \end{align} Now,  by \eqref{eq:lowerESn}, we have	 \[   \frac{(250x)^2\cdot \log n }{\Ex{  X_n(\alpha) }\cdot (1-q)} \leq  \frac{(250x)^2\cdot \log n}{ \frac{ 2\alpha -1  }{10} \cdot n\cdot (1-q)^{5/2}   \mathrm{e}^{-\frac{\pi^2}{6(1-q)}}\cdot (1-q)} \leq  \frac{(1000x)^2}{2\alpha -1}\cdot\frac{ \mathrm{e}^{\frac{\pi^2}{6(1-q)}}\log n}{  n\cdot (1-q)^{7/2}}    \]  The result follows by \eqref{eq:conccuts} as $\Pr{Y_n(\alpha)\neq X_n(\alpha)}\leq n^{-7}$ by Lemma \ref{lem:localify}. \end{proof}

\subsection{Proof of Theorem \ref{prop:sep}} \label{sec:pfof1.3}

We now use our bounds on $\Pr{\mathcal{C}_k^\cf}$ and $X_n(\alpha)$ to establish Theorem \ref{prop:sep}, which shows that $q=1-\frac{\pi^2}{6\log n}$ is a sharp threshold for having a vertex cut separating the graph into two macroscopic pieces.

\begin{proof}[Proof of Theorem \ref{prop:sep}]As we are proving a statement concerning a limit in $n$ we can assume $n\geq\big(\frac{100}{\min\{1-\alpha, \;2\alpha -1\}}\big)^5$. We break the proof into three cases depending on the value of $q$; in the first two cover the $0$-statement, and the last case deals with the $1$-statement.
	
	     Recall that $X_n(\alpha)$ is the number of cut vertices in  $[\lceil (1-\alpha)n\rceil,\lfloor \alpha n\rfloor   ]$  and thus $\{X_n(\alpha)\geq 1 \} =\{\exists \text{ an } (1,\alpha )\text{-separator} \}$. By Markov's inequality, Corollary \ref{cor:sepcor}, and Lemma \ref{lem:R_kbyF_k}, we have  \[ \Pr{X_n(\alpha)\geq 1 } \leq \Ex{X_n(\alpha)} \leq 2 n\cdot \max_{k\in [\lceil (1-\alpha)n\rceil,\lfloor \alpha n\rfloor   ]}\Pr{\mathcal{C}_k^{\mathcal{F}}}\leq 2n\cdot \max_{k\in [\lceil (1-\alpha)n\rceil,\lfloor \alpha n\rfloor   ]}\Pr{\cf_k}.\] To prove the $0$-statement it suffices to show $\Pr{\mathcal{C}_k^{\mathcal{F}}}$ or $\Pr{\cf_k}$ is $\lo{\frac1n}$ when $k\in [\lceil (1-\alpha)n\rceil,\lfloor \alpha n\rfloor ]$.  
	     
	     \medskip
\noindent\textbf{Case (i)} $\left[(1-q)^{-1} \geq n^{4/5}\right]$\textbf{:} If $q=1$ then the result follows from Lemma \ref{treebanddiam1}, so we can assume $q\neq 1$. In this case we have $q\geq 1-n^{-4/5}$ and $\min\{k,n-k \}-1 \geq 5\sqrt{n}$, so 
\begin{equation}\frac{1-q^{\min\{k,n-k \}}}{1-q} = \frac{(1-q)(1 + q +q^2+\cdots +q^{\min\{k,n-k \}-1})}{1-q}  \geq  \sum_{i=0}^{5\sqrt{n}} (1-n^{-4/5})^i. 
\end{equation}Now, by Bernoulli's inequality, we have 
\begin{equation}
\frac{1-q^{\min\{k,n-k \}}}{1-q}\geq \sum_{i=0}^{5\sqrt{n}} (1-i\cdot n^{-4/5}) \geq 4\sqrt{n}. 
\end{equation}
Using this bound in combination with Lemma \ref{lem:cheapflush} and the fact $q\geq 1-n^{-4/5}\geq 1/2$ yields  \[\Pr{\cf_k}\leq \exp\left(- \frac{q }{2}\cdot \frac{ \left(1-q^{\min\{k,n-k \}} \right)}{1-q} \right) \leq \exp\left(-\frac{1}{4}\cdot 4\sqrt{n} \right)\leq \mathrm{e}^{-\sqrt{n}}.\]

		\noindent\textbf{Case (ii)} $[(1-q)^{-1} < n^{4/5},\text{ and } \frac{\pi^2}{6(1-q)} - \log n + \frac{5\log\log n}{2} \rightarrow \infty ]$\textbf{:} By Lemma \ref{lem:precisecor} we have \begin{equation}\label{eq:boundonPS}\Pr{\mathcal{C}_k^{\mathcal{F}}} \leq \mathrm{e}^{6}  \cdot \exp\left(-\frac{\pi^2}{6(1-q)}\right)\cdot (1-q)^{5/2}= \exp\left(6 -\frac{\pi^2}{6(1-q)} + \frac{5\log(1-q)}{2}\right),\end{equation} for  $k\in [\lceil (1-\alpha)n\rceil,\lfloor \alpha n\rfloor ]$ as $n\geq\big(\frac{100}{\min\{1-\alpha, \;2\alpha -1\}}\big)^5$. Differentiating the exponent of \eqref{eq:boundonPS} gives \begin{equation}\label{eq:diff}\frac{\mathrm{d} }{ \mathrm{d} \,q }\left(6 -\frac{\pi^2}{6(1-q)} + \frac{5\log(1-q)}{2}\right) =-\frac{\pi^2}{6(1-q)^2} - \frac{5}{2(1-q)} = \frac{15q -\pi^2 - 15}{6(1-q)^2}. \end{equation} From \eqref{eq:diff} we see that the exponent of the bound on $\Pr{\mathcal{C}_k^{\mathcal{F}}}$ from \eqref{eq:boundonPS} is monotone decreasing in $q$ provided $q\in [0,1)$. Hence to bound $\Pr{\mathcal{C}_k^{\mathcal{F}}}$ it suffices to evaluate \eqref{eq:boundonPS} for the smallest $q$ in the scope of this case. Thus, if we let $w(n)= \frac{\pi^2}{6(1-q)} - \log n + \frac{5\log\log n}{2} $ then it suffices to bound $\Pr{\ce_k}$ in the case $w(n)\rightarrow \infty $ arbitrary slowly and we will assume, for convenience, that $ w(n) \leq \sqrt{\log\log n}$. As $q\geq 1-2/\log n $ we have $\log(1-q) \leq 2 -\log\log n $. Thus by \eqref{eq:boundonPS} \[\Pr{\mathcal{C}_k^{\mathcal{F}}} \leq \exp\left(6 -\frac{\pi^2}{6(1-q)} + \frac{5\log(1-q)}{2}\right) \leq  \exp\left(7 -\log n  - w(n) \right) = \frac{\mathrm{e}^7 \cdot \mathrm{e}^{-w(n)} }{n } = \lo{\frac{1}{n}}. \]  
			
		\noindent	\textbf{Case (iii)} $\left[q \geq 0,\text{ and } \frac{\pi^2}{6(1-q)} - \log n + \frac{9\log\log n}{2} \rightarrow -\infty \right]$\textbf{:} In this final case we will show that $\Pr{X_n(\alpha) \geq 1 }=1-\lo{1} $ by applying Lemma \ref{lem:lowerX_n} which bounds $X_n(\alpha)$ from below.
			If we let $w(n)= - \frac{\pi^2}{6(1-q)} + \log n - \frac{9\log\log n}{2} $ then for this case we have $w(n) \rightarrow \infty$ and 
			\begin{equation}\label{eq:caseivexpbdd}\exp\left( -\frac{\pi^2}{6(1-q)}\right) = \exp\left(- \log n + \frac{9\log\log n}{2} + w(n)   \right) = \frac{\mathrm{e}^{w(n)}\cdot \log^{9/2} n }{n} . \end{equation}
			For this case the (loose) bound $q\leq 1-1/\log n$ holds, thus $ (1-q)^{5/2}\geq 1/ \log^{5/2} n$, and so   \begin{equation}\label{eq:expbdd} n\cdot (1-q)^{5/2}\cdot \mathrm{e}^{-\frac{\pi^2}{6(1-q)}} \geq n \cdot \frac{1}{\log^{5/2} n} \cdot \frac{\mathrm{e}^{w(n)}\cdot \log^{9/2} n }{n} =    \mathrm{e}^{w(n)}\log^2 n ,\end{equation} by \eqref{eq:caseivexpbdd}. The bound $q\leq 1-1/\log n$ also implies $1/(1-q)^{7/2}\leq \log^{7/2} n$ and so, again by \eqref{eq:caseivexpbdd}:
			 \begin{equation}\label{eq:varbdd}\frac{ \log  n }{n(1-q)^{7/2} }\cdot \mathrm{e}^{\frac{\pi^2}{6(1-q)}}  \leq \frac{\log^{9/2} n}{n} \cdot  \frac{n}{\mathrm{e}^{w(n)}\cdot \log^{9/2} n } =\mathrm{e}^{ -w(n)}. \end{equation}

		Inserting the bounds \eqref{eq:expbdd} and \eqref{eq:varbdd} into Lemma \ref{lem:lowerX_n} and choosing $x=w(n)$ gives \[\Pr{X_n(\alpha) < \frac{(2\alpha-1)\log^2n}{10}\cdot \mathrm{e}^{w(n)}\left(1 -\frac{1000}{\sqrt{2\alpha -1}}\cdot  w(n)\cdot \mathrm{e}^{ -w(n)/2}   \right)}\leq \frac{5}{w(n)^2} + \frac{1}{n^7}. \]The result follows since $\alpha >1/2$ is fixed and $w(n)\rightarrow \infty$.\end{proof}

\begin{remark}\label{rmk:sep}The events $\cf_{k} $ are positively correlated, that is $\Pr{\cf_k\cap \cf_{k+j}}>\Pr{\cf_k}\Pr{ \cf_{k+j}}$. This inequality also holds for $\mathcal{F}$ replaced by $\mathcal{L}$ for $j$ not too large. This is an obstruction to proving a (significantly) improved lower threshold in Case $(iii)$ using the second moment method.
\end{remark}

\subsection{Proof of Theorem \ref{diam}}

We now prove Theorem \ref{diam}, which shows the diameter is linear when $q$ is bounded away from $1$. Theorem \ref{diam} follows from the next lemma and an earlier bound on the number of cut vertices.  
\begin{lemma}\label{lem:diambound}Let $G$ be an $n$-vertex graph containing a Hamiltonian path. Then if there exists a set $C\subseteq V$ such that each vertex in $C$ is a cut vertex of $G$, then $\diam(G)\geq |C|+1$. 
\end{lemma}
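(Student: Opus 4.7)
The plan is to let $P = u_1 u_2 \cdots u_n$ be the given spanning path and show $\diam(G) \ge d_G(u_1, u_n) \ge |C|+1$. The key observation is that the Hamiltonian path $P$ forces every cut vertex to separate the two endpoints of $P$.

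First I would rule out the possibility that an endpoint of $P$ lies in $C$. If $u_1 \in C$, then $P - u_1 = u_2 \cdots u_n$ is a spanning connected subgraph of $G - u_1$, so $G - u_1$ is connected, contradicting the assumption that $u_1$ is a cut vertex; symmetrically for $u_n$. Hence every $v \in C$ is some $u_i$ with $1 < i < n$.

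Next, fix any $v = u_i \in C$. In $G - v$, the two subpaths $u_1 \cdots u_{i-1}$ and $u_{i+1} \cdots u_n$ are each connected, are disjoint, and together cover $V(G) \setminus \{v\}$. Since $G - v$ is disconnected, these two subpaths must sit in different components of $G - v$; in particular, $u_1$ and $u_n$ lie in different components of $G - v$, so every $u_1$--$u_n$ path in $G$ must pass through $v$. Applying this to every $v \in C$ simultaneously, any $u_1$--$u_n$ path in $G$ contains all $|C|$ cut vertices as internal vertices, hence uses at least $|C|+2$ distinct vertices and therefore at least $|C|+1$ edges. This gives $\diam(G) \ge d_G(u_1,u_n) \ge |C|+1$, as required.

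The argument is essentially immediate once one exploits the Hamiltonicity of $P$; the only mild subtlety is the boundary observation that endpoints of $P$ cannot be cut vertices, which is what lets us use the fixed pair $(u_1, u_n)$ as a universal witness through which every cut vertex must be routed.
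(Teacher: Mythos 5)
Your proof is correct and follows essentially the same route as the paper: observe that the path's endpoints cannot be cut vertices, show every $u_1$--$u_n$ path must pass through each cut vertex (since each cut vertex separates the two sub-paths on either side of it), and conclude $\diam(G)\geq d_G(u_1,u_n)\geq |C|+1$. No gaps to report.
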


\begin{proof}Label the vertices from $1$ to $n$ along the Hamiltonian path $P$. Denote the cut vertices by $c_1<  \cdots <   c_{k}$, where $k=|C|$, ordered with respect to the vertex labelling of $P$. \begin{claim} Any path from vertex $1$ to vertex $n$ must include every vertex of $C$.\end{claim}
\begin{poc}
	Since the graph contains an $n$-vertex path as a subgraph (i.e.\ a Hamiltonian path), we must have that for $1\leq i \leq |C|$ the removal of the cut vertex $c_i$ separates the graph into exactly two connected graphs, one containing the subpath $[1,\dots,c_{i}-1] $ and the other containing the subpath $[c_{i}+1, \dots ,n]$, with no edges between them. Now assume, for a contradiction, that there exists a path $Q$ from $1$ to $n$ in $G$ and a vertex $c_i\in C$ which is not contained in $Q$. Since the path does not go through $c_i$  it must use an edge $xy$ where $x\in [1,\dots, c_i-1]$ and  $y\in [ c_i+1, n]$ however this contradicts our earlier observation about the cut vertex $c_i$.  
\end{poc} By the claim, the shortest path connecting $1$ to $n$ has length at least $|C|+1$. This follows as $1$ and $n$ cannot be in $C$, since removing them does not disconnect the graph due to the existence of the path $P$ on $[n]$ as a subgraph. \end{proof}

We note that one cannot completely remove the assumption of a subpath of  length $n$ in Lemma \ref{lem:diambound}. To see this consider the binary tree. We are now ready to prove Theorem \ref{diam}. 

\begin{proof}[Proof of Theorem \ref{diam}] Let $\alpha =2/3$ and recall that $X_n(\alpha)$ denotes the number of cut vertices in $[\lceil (1-\alpha)n\rceil,\lfloor \alpha n\rfloor   ] = [\lceil n/3\rceil,\lfloor 2n/3\rfloor   ]$. Since $0<q\leq 1-\eps$ we have $\mathrm{e}^{\frac{\pi^2}{6(1-q)}} \leq \mathrm{e}^{\frac{\pi^2}{6\eps}} $. Assume that $n\geq 300^5\cdot \mathrm{e}^{\frac{\pi^2}{6\eps}} \geq \left(\frac{100}{\max\{1-\alpha,2\alpha -1  \}} \right)^5 $. We will now apply Lemma \ref{lem:lowerX_n} to bound $X_n(\alpha)$. First observe that, by the restrictions on $n$ above and $0<q\leq 1-\eps$,   \[\frac{200}{\sqrt{2\cdot(2/3)-1}}\sqrt{\frac{\mathrm{e}^{\frac{\pi^2}{6(1-q)}} \log  n }{n(1-q)^{7/2} }}\leq n^{-1/3}, \quad \text{and} \quad  \frac{2\cdot(2/3)-1}{10}\cdot n(1-q)^{5/2} \mathrm{e}^{-\frac{\pi^2}{6(1-q)}} \geq \frac{\eps^{5/2}\mathrm{e}^{-\frac{\pi^2}{6\eps}} }{30} n.\] Thus, is we set $c:=c(\eps) =  \eps^{5/2}\mathrm{e}^{-\frac{\pi^2}{6\eps}} \cdot 300^{-5}  $, then our earlier assumption on $n$ is implied by $n\geq 1/c$. Furthermore, if we set $x=n^{1/6}$ and $\alpha =2/3$ in Lemma \ref{lem:lowerX_n}, then this yields $\Pr{X_n(\alpha)<  c n} \leq  3n^{-1/6}\leq n^{-1/10}$. The result follows as there are at least $X_n(\alpha)$ cut vertices and $\{\diam(\cp(n,q)) \leq  d+1\} \subseteq \{X_n(\alpha) \leq  d \}$ holds for any integer $ d \geq 0 $ by Lemma \ref{lem:diambound}. \end{proof}

\section{The Treewidth and Cutwidth, Proof of Theorem \ref{thm:treepathcut}}\label{sec:treewidth}
The aim of this section is to prove Theorem \ref{thm:treepathcut}, which gives lower and upper bounds on the treewith and cutwidth in the tangled path, respectively. This is an amalgamation of several bounds proved in this section. In particular, the lower bounds on treewidth follow from  Lemmas \ref{trwlower}, \ref{lem:trwlowerhighq}, and the bound $\trw(G) \geq \lfloor \iso(G) \cdot n/4 \rfloor    $ from \eqref{trw-sep} in combination with Theorem \ref{treebanddiam}. Then the upper bounds on cutwidth follow from Lemma \ref{cwupper}, and as $\cw(G) \leq |E(G)|\leq 2n-2$ by \eqref{trw-sep}.

\subsection{Lower Bound on Treewidth}\label{sec:trwlow}
The main results in this section are Lemmas \ref{trwlower} and \ref{lem:trwlowerhighq}, which give two separate bounds on treewidth. The first bound is tight, but it only holds for small values of $q$. Whereas the second holds for any $q\leq 1 - 1/n$, however it is only known to be tight up to a $\log$ factor. Both bounds follow from finding a minor with good expansion properties, however the probability of finding such a minor is calculated slightly differently in the two different cases.

Before we begin in earnest, we must introduce consecutive patterns in permutations. Given a list of $k$ distinct integers $w=w_1,\dots, w_k$, the {\em standardization} of $w$, written $\st(w)$, is the unique permutation of $[k]$ that is order-isomorphic to $w$. That is, we obtain $\st(w)$ by replacing the smallest element among $\{w_1,\ldots,w_k\}$ with $1$, the second smallest with $2$, and so on. We say that $\pi\in S_n$ {\em contains $\sigma\in S_k$ consecutively} if there exists an index $i\in[n-k+1]$ such that $\st\!\left(\pi(i),\dots,\pi(i+k-1)\right)=\sigma$; otherwise, we say that $\pi$ {\em avoids $\sigma$ consecutively}.

We now give a lemma relating consecutive patterns in permutations to minors in the related tangled paths.  Figure \ref{fig:conseqtrw} illustrates this Lemma. 

\begin{lemma}\label{lem:concpattern}
	Let $1\leq k\leq n$ be integers and let $ \pi\in  S_n$ contain $\sigma \in  S_k$ consecutively. Then, $\layer(\pi(P_n),P_n)$ contains $\layer(\sigma(P_k),P_k)$ as a minor. 
\end{lemma}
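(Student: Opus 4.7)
The strategy is to exhibit $\layer(\sigma(P_k), P_k)$ as a minor of $\layer(\pi(P_n), P_n)$, so that the conclusion follows immediately from Lemma \ref{sub-div}. Suppose $\pi$ contains $\sigma$ consecutively at some position $i \in [n-k+1]$, and let $a_1 < a_2 < \cdots < a_k$ denote the values $\{\pi(i), \pi(i+1), \ldots, \pi(i+k-1)\}$ listed in increasing order. By the definition of standardization, $\pi(i+j-1) = a_{\sigma(j)}$ for every $j \in [k]$.

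The plan is to present an explicit branch-set model. For $j = 1, \ldots, k-1$ set $X_j := \{a_j, a_j+1, \ldots, a_{j+1}-1\}$, and set $X_k := \{a_k\}$. These are pairwise disjoint subsets of $[a_1, a_k] \subseteq [n]$, and each $X_j$ consists of consecutive integers, so it is connected in $P_n$ (and hence in $\layer(\pi(P_n), P_n)$). Thus the $X_j$ are valid branch sets, and what remains is to verify that for each edge of $\layer(\sigma(P_k), P_k)$ there is a corresponding edge of $\layer(\pi(P_n), P_n)$ with endpoints in the two required branch sets.

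For an identity-path edge $j(j+1) \in E(P_k)$, the edge $(a_{j+1}-1)a_{j+1} \in E(P_n)$ crosses from $X_j$ to $X_{j+1}$. For a permuted-path edge $\sigma(j)\sigma(j+1)$ of $\sigma(P_k)$, the edge $\pi(i+j-1)\pi(i+j) = a_{\sigma(j)}a_{\sigma(j+1)}$ belongs to $\pi(P_n)$; and by construction $a_m \in X_m$ for every $m \in [k]$ (either $m<k$, in which case $a_m$ is the left endpoint of the interval $X_m$, or $m=k$ and $X_k = \{a_k\}$), so this edge crosses from $X_{\sigma(j)}$ to $X_{\sigma(j+1)}$. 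This produces a valid minor model of $\layer(\sigma(P_k), P_k)$ inside $\layer(\pi(P_n), P_n)$, and Lemma \ref{sub-div} yields the stated inequality.

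The proof is essentially bookkeeping with no real obstacle; the one thing to take care over is the indexing, i.e.\ checking that $a_{\sigma(j)}$ genuinely sits inside $X_{\sigma(j)}$. Any additional edges of $\pi(P_n)$ whose endpoints happen to both lie in $[a_1, a_k]$ but not at positions $i, i+1, \ldots, i+k-1$ are harmless: the minor-via-branch-set definition only requires \emph{some} crossing edge per edge of $H$, and extra edges of $G$ (including ones that fall inside a single $X_j$, which would become loops under contraction) can simply be ignored or deleted.
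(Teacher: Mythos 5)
Your proof is correct and follows essentially the same route as the paper: the paper takes the subgraph consisting of the path on all integers in $[a_1,a_k]$ together with the $k-1$ permuted edges $\pi(i+j-1)\pi(i+j)=a_{\sigma(j)}a_{\sigma(j+1)}$, observes it is a copy of $\layer(\sigma(P_k),P_k)$ with the edges of $P_k$ subdivided, and concludes via minor-monotonicity of treewidth (Lemma \ref{sub-div}); your interval branch sets $X_j$ simply make the corresponding contractions explicit. The indexing verification $\pi(i+j-1)=a_{\sigma(j)}\in X_{\sigma(j)}$ and the crossing edges you exhibit are exactly the ones the paper's subdivision argument uses.
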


\begin{proof}Let $\pi(i),\dots,\pi(i+k-1) $ be such that $\st\!\left(\pi(i),\dots,\pi(i+k-1)\right)=\sigma$ and $s_1 < \cdots <   s_{k}$ be the elements of $\{\pi(i),\dots,\pi(i+k-1) \}$ ordered increasingly. Define $P'$ to be the path of length $s_k-s_1+1$ on $ \{s_1, s_1+1,\dots , s_k\}$. Finally, let the graph $H$ be the union of $P'$ and the set of edges $\{\pi(i+j)\pi(i+j+1)\}_{j=0}^{k-2}$. Observe that $H$ is isomorphic to a copy of $\layer(\sigma(P_k),P_k)$ where the edges of $P_k$ have been subdivided. Now, as $H\subseteq \layer(\pi(P_n),P_n)$ the result follows. 
\end{proof}

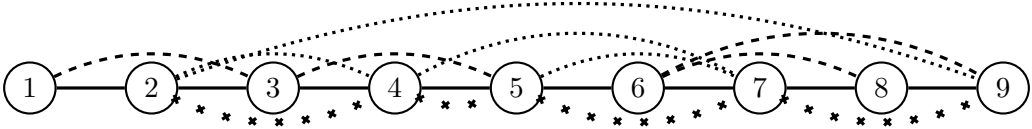
\begin{figure}  
	\center\begin{tikzpicture}[xscale=0.8,yscale=0.6,knoten/.style={thick,circle,draw=black,fill=white},edge/.style={black,very thick},oedge/.style={dotted,very thick}, bedge/.style={dashed,very thick},gedge/.style={decorate,decoration={crosses},very thick}]
	\foreach \x in {1,...,9}
	\node[knoten] (\x) at (2*\x,5) {$\x$};
	\draw[edge] (1) to (2);
	\draw[edge] (2) to (3);
	\draw[edge] (3) to (4);
	\draw[edge] (4) to (5);
	\draw[edge] (5) to (6); 
	\draw[edge] (6) to (7); 
	\draw[edge] (7) to (8);
	\draw[edge] (8) to (9);

	\draw[bedge] (3) to[out=30,in=150 ] (5); 
	\draw[bedge] (1) to[out=30,in=150 ] (3); 
	\draw[bedge] (6) to[out=35,in=145 ] (9);
	\draw[bedge] (6) to[out=30,in=150 ] (8);

	\draw[oedge] (5) to[out=30,in=150 ] (7); 
	\draw[oedge] (4) to[out=35,in=145 ] (7);
	\draw[oedge] (2) to[out=30,in=150 ] (4);
	\draw[oedge] (2) to[out=25,in=155 ] (9);
	
	\draw[gedge] (2) to[out=-30,in=-150 ] (4);
	\draw[gedge] (4) to[out=-25,in=-155 ] (5);
	\draw[gedge] (5) to[out=-30,in=-150 ] (7);
	\draw[gedge] (7) to[out=-30,in=-150 ] (9);

	\end{tikzpicture}

	\caption{\label{fig:conseqtrw}An example of Lemma \ref{lem:concpattern}. Let $\sigma=(3,4,2,1,5)$ and $\pi=(1,3,5,7,4,2,9,6,8)$, thus $\st(5,7,4,2,9) = \sigma$. The graph induced by the crossed and dotted edges is isomorphic to $\layer(\sigma(P_5),P_5)$. The graph induced by the dashed, solid and dotted edges is $\layer(\pi(P_9),P_9)$. The crossed edges can be subdivided to give a path on $\{2,\dots, 9 \}$.}
\end{figure}	
 
  We now recall two results from \cite{BhatSubSeq}, the first is a restricted form of independence for patterns in a $q$-Mallows permutation.

\begin{lemma}[{\cite[Lemma 2.5]{BhatSubSeq}}] \label{lem:indep-seqs} Let $(i_1,\ldots,i_k)$ and $(i'_1,\ldots,i'_\ell)$
	be two increasing sequences such that $i_k < i_1'$. Let $\pi \sim \mu_{n,q}$ for $n\ge i'_\ell$. Then, $\st\left(\pi(i_1),\ldots,\pi(i_k)\right)$ and $\st\left(\pi(i_1'),\ldots,\pi(i_k')\right)$ are independent.
\end{lemma}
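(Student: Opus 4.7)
The plan is to derive Lemma~\ref{lem:indep-seqs} from a stronger regenerative property of the Mallows measure: for any split point $1\le m<n$ and $\pi\sim\mu_{n,q}$, the standardisations
$\pi_1:=\st(\pi(1),\dots,\pi(m))$ and $\pi_2:=\st(\pi(m+1),\dots,\pi(n))$ are independent, with $\pi_1\sim\mu_{m,q}$ and $\pi_2\sim\mu_{n-m,q}$. Given this, the lemma is immediate: pick any integer $m$ with $i_k\le m<i'_1$ (which exists since $i_k<i'_1$), and observe that $\st(\pi(i_1),\dots,\pi(i_k))=\st(\pi_1(i_1),\dots,\pi_1(i_k))$ is a function of $\pi_1$ alone, while $\st(\pi(i'_1),\dots,\pi(i'_\ell))=\st(\pi_2(i'_1-m),\dots,\pi_2(i'_\ell-m))$ is a function of $\pi_2$ alone. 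Independence of $\pi_1$ and $\pi_2$ then transfers to the two standardisations.

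To establish the regenerative property, I would decompose the inversion count of $\pi$ according to where each inverted pair sits relative to the cut at $m$. Let $A:=\{\pi(1),\dots,\pi(m)\}\subseteq[n]$. Every inversion $(i,j)$ of $\pi$ with $j\le m$ is counted by $\inv(\pi_1)$, every inversion with $i>m$ is counted by $\inv(\pi_2)$, and a \emph{crossing} inversion $(i,j)$ with $i\le m<j$ is precisely a pair $(a,b)$ with $a\in A$, $b\in[n]\setminus A$, and $a>b$. The number of crossing inversions is therefore
\[ I(A):=\left|\{(a,b)\,:\,a\in A,\; b\in[n]\setminus A,\; a>b\}\right|, \]
which depends only on the \emph{set} $A$ and not on how the values in $A$ are arranged on positions $[1,m]$ or how the remaining values are arranged on positions $[m+1,n]$. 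Hence $q^{\inv(\pi)}=q^{\inv(\pi_1)}\,q^{\inv(\pi_2)}\,q^{I(A)}$.

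Since the triple $(\pi_1,\pi_2,A)$ is in bijection with $\pi\in S_n$, summing the Mallows weights over all $A\in\binom{[n]}{m}$ yields
\[ \Pr{\pi_1=\sigma_1,\;\pi_2=\sigma_2}=\frac{q^{\inv(\sigma_1)}\,q^{\inv(\sigma_2)}}{Z_{n,q}}\cdot \sum_{A\in\binom{[n]}{m}}q^{I(A)}. \]
The right-hand side factorises as $f(\sigma_1)\cdot g(\sigma_2)$, which is precisely the statement that $\pi_1$ and $\pi_2$ are independent. Imposing $\sum_{\sigma_1,\sigma_2} f(\sigma_1)g(\sigma_2)=1$ then forces the marginals to be $\mu_{m,q}$ and $\mu_{n-m,q}$, as predicted, and incidentally evaluates $\sum_A q^{I(A)}$ as the Gaussian binomial $Z_{n,q}/(Z_{m,q}Z_{n-m,q})$.

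The only subtle point, and thus the main thing to verify carefully, is that $I(A)$ truly depends only on $A$; this is an easy but essential combinatorial observation. Once the three-part inversion decomposition is in hand, the proof reduces to purely algebraic bookkeeping of Boltzmann weights, so no probabilistic heavy lifting (no coupling, no moment method) is required.
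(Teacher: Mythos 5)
Your proof is correct, but note that the paper itself does not prove this lemma at all: it is imported verbatim from Bhatnagar--Peled \cite[Lemma 2.5]{BhatSubSeq}, so there is no internal argument to compare against. What you give is a valid self-contained derivation, and in fact of a stronger regenerative statement: splitting at any $m$, the inversion count decomposes exactly as $\inv(\pi)=\inv(\pi_1)+\inv(\pi_2)+I(A)$, where the crossing term $I(A)$ depends only on the value set $A=\{\pi(1),\dots,\pi(m)\}$ (each crossing pair of positions corresponds bijectively to a pair $a\in A$, $b\notin A$ with $a>b$, irrespective of the within-block arrangements), and $(A,\pi_1,\pi_2)\mapsto\pi$ is a bijection. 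Summing the weights over $A$ then factorises the joint law of $(\pi_1,\pi_2)$, which gives independence, identifies the marginals as $\mu_{m,q}$ and $\mu_{n-m,q}$, and evaluates $\sum_A q^{I(A)}$ as the Gaussian binomial $Z_{n,q}/(Z_{m,q}Z_{n-m,q})$; the lemma follows since the two standardised subsequences are functions of $\pi_1$ and $\pi_2$ respectively once you cut at any $m$ with $i_k\le m<i'_1$. Compared with simply citing \cite{BhatSubSeq}, your route has the added benefit that iterating the split also recovers the translation-invariance statement used in Lemma \ref{lem:induced_permutation_Mallows} (consecutive blocks are themselves Mallows distributed), all by elementary weight bookkeeping rather than via the Mallows process. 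The one step you flag as needing care --- that $I(A)$ depends only on the set $A$ --- is indeed the crux, and your justification of it is sound.
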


The second follows from translation invariance of Mallows permutations \cite[Lemma 2.6]{BhatSubSeq}. 
\begin{lemma}[{\cite[Corollary 2.7]{BhatSubSeq}}]\label{lem:induced_permutation_Mallows}
	Let $(i,i+1,\ldots,i+k-1)\subseteq[n]$ be a sequence of consecutive elements. If $\pi\sim\mu_{n,q}$ then
	$\st\left(\pi(i),\ldots,\pi(i+k-1)\right)\sim\mu_{k,q}$. 
\end{lemma}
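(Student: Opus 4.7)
My plan is to prove this by direct computation, decomposing the inversion count of $\pi$ after conditioning on the set of values appearing in the consecutive block together with the outside permutation. To set up, I would fix an arbitrary $\tau \in S_k$, a subset $A = \{a_1 < \cdots < a_k\} \subseteq [n]$, and a bijection $\rho: [n] \setminus \{i, \ldots, i+k-1\} \to [n] \setminus A$; these three pieces of data jointly determine a unique $\pi \in S_n$ satisfying $\pi(i+j-1) = a_{\tau(j)}$ for each $j \in [k]$ and agreeing with $\rho$ off the block.

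The core step is to decompose $\inv(\pi)$ into three contributions: (a) inversions with both positions inside the block $\{i, \ldots, i+k-1\}$, (b) inversions with both positions outside the block, and (c) \emph{cross inversions} with one position inside and one outside. By the definition of standardization, (a) equals $\inv(\tau)$ and (b) equals $\inv(\rho)$. I would then establish that (c) equals some quantity $C(A,\rho)$ depending only on $A$ and $\rho$, never on $\tau$. For any outside position $j'$ with value $v = \rho(j')$: if $j' < i$, then each pair $(j', j)$ with $j$ in the block is an inversion iff $v > \pi(j)$, so the contribution is $|\{a \in A : a < v\}|$; if $j' > i+k-1$, a symmetric calculation yields $|\{a \in A : a > v\}|$. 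In either case only $A$ and $v$ enter; the internal ordering $\tau$ is invisible.

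With $\inv(\pi) = \inv(\tau) + \inv(\rho) + C(A,\rho)$ in hand, I would sum the Mallows weight $q^{\inv(\pi)}/Z_{n,q}$ over $\tau \in S_k$ while keeping $A$ and $\rho$ fixed. Since $\sum_{\tau \in S_k} q^{\inv(\tau)} = Z_{k,q}$, the conditional probability that $\st(\pi(i), \ldots, \pi(i+k-1)) = \tau$ given $(A, \rho)$ evaluates to $q^{\inv(\tau)}/Z_{k,q} = \mu_{k,q}(\tau)$, which does not depend on the conditioning. The law of total probability then immediately delivers the marginal statement.

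The one delicate point I expect to have to argue carefully is the $\tau$-independence of the cross-inversion count, and this is exactly where the hypothesis of \emph{consecutive} positions does all the work. Every outside position sits entirely to the left or entirely to the right of the block, so its cross inversions sum over the full set $A$ of block values and cannot detect the internal ordering chosen by $\tau$. Were the positions non-consecutive, an outside position lying between two block positions would partition $A$ according to $\tau$ into pieces of block-values preceding and following it, and the $\tau$-cancellation would fail; this is why the same corollary is not expected to hold for arbitrary (non-consecutive) index sets.
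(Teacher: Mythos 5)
Your argument is correct. The decomposition $\inv(\pi)=\inv(\tau)+\inv(\rho)+C(A,\rho)$ is valid, and your identification of where consecutiveness enters is exactly right: because every position outside the block lies entirely to its left or entirely to its right, each cross pair contributes according to whether the outside value is above or below an element of $A$, so the cross count sees only the set $A$ and never the internal order $\tau$. Together with the standard identity $\sum_{\tau\in S_k}q^{\inv(\tau)}=Z_{k,q}$ (the same fact that normalises $\mu_{k,q}$), the conditional law of $\st\left(\pi(i),\ldots,\pi(i+k-1)\right)$ given $(A,\rho)$ is $\mu_{k,q}$, independently of the conditioning, and the marginal statement follows.

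This is, however, a different route from the paper's. The paper gives no proof at all: it imports the statement as \cite[Corollary 2.7]{BhatSubSeq}, where it is deduced from the translation invariance of Mallows permutations (\cite[Lemma 2.6]{BhatSubSeq}) and the structure of the insertion process, i.e.\ from probabilistic properties of the $q$-Mallows process rather than from a direct weight computation. Your proof is self-contained and purely combinatorial, working straight from the definition $\mu_{n,q}(\pi)=q^{\inv(\pi)}/Z_{n,q}$; what it buys is independence from the machinery of \cite{BhatSubSeq}, and as a bonus it makes transparent both why the same parameter $q$ survives (the block inversions enter with weight $q^{\inv(\tau)}$ exactly) and why the hypothesis of consecutive positions cannot be dropped, a point the citation leaves implicit. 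The cost is only that the paper's approach, by leaning on translation invariance, packages this lemma together with the independence statement (\cite[Lemma 2.5]{BhatSubSeq}, the paper's Lemma 5.8) in one framework, whereas your computation would have to be redone (with a genuinely different conditioning) to recover that companion result.
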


Given $\sigma\in  S_k $, we let $\mathsf{Forb}_n(\sigma) \subseteq  S_n$ denote the set of permutations in $ S_n$ that avoid $\sigma$ consecutively. Similarly, given  $S\subseteq  S_k$ we let $\mathsf{Forb}_n(S) = \bigcap_{\sigma\in S}\mathsf{Forb}_n(\sigma)$. Recall the $(n,q)$-Mallows measure $\mu_{n,q}(\sigma) $ of  $\sigma \in  S_n$ from \eqref{eq:mu_n_q_def}. Thus $\mu_{n,q}(S)=\sum_{\sigma\in S}\mu_{n,q}(\sigma) $ for any $S\subseteq  S_n$.

\begin{lemma}\label{lem:patterncouplesimple}Let $1\leq k\leq n$ be integers, and $0\leq q\leq 1$. Then, for any $S\subseteq  S_k$ we have 
	\[\mu_{n,q}\left(\mathsf{Forb}_n(S)\right) \leq \mu_{k,q}\left(\mathsf{Forb}_k(S)\right)^{\lfloor n/k \rfloor }.  \]     
\end{lemma}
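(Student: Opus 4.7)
The plan is to partition the index set $[n]$ into $\lfloor n/k\rfloor$ disjoint consecutive blocks, each of length exactly $k$ (there may be some leftover indices at the end which we simply ignore). Explicitly, I would set $I_j=(j-1)k+1,(j-1)k+2,\dots,jk$ for $j=1,\dots,\lfloor n/k\rfloor$.

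First I would observe that if $\pi\in\mathsf{Av}_n(S)$, then for \emph{every} window of length $k$ in $\pi$, the standardization of the values on that window is not equal to any $\sigma\in S$. In particular this must hold for each of the blocks $I_j$. Note that for a permutation $\tau\in S_k$ the condition $\tau\in\mathsf{Av}_k(S)$ is precisely the condition $\tau\notin S$, since the only window of length $k$ inside a permutation of length $k$ is the permutation itself. Hence, writing $T_j:=\st(\pi((j-1)k+1),\dots,\pi(jk))$, we have the inclusion
\[
\mathsf{Av}_n(S)\ \subseteq\ \bigcap_{j=1}^{\lfloor n/k\rfloor}\bigl\{\pi\in S_n : T_j\in\mathsf{Av}_k(S)\bigr\}.
\]

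Next I would invoke the two structural properties of Mallows permutations from \cite{BhatSubSeq} stated as Lemmas \ref{lem:indep-seqs} and \ref{lem:induced_permutation_Mallows}. Since the blocks $I_1,\dots,I_{\lfloor n/k\rfloor}$ are pairwise disjoint and arranged in strictly increasing order, repeated application of Lemma \ref{lem:indep-seqs} (or a straightforward induction on the number of blocks) gives that the standardizations $T_1,\dots,T_{\lfloor n/k\rfloor}$ are jointly independent under $\mu_{n,q}$. Moreover, since each block $I_j$ consists of $k$ consecutive indices in $[n]$, Lemma \ref{lem:induced_permutation_Mallows} tells us that each $T_j$ has distribution $\mu_{k,q}$. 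Combining the inclusion above with independence and identical distribution gives
\[
\mu_{n,q}(\mathsf{Av}_n(S))\ \leq\ \Pr{\,T_j\in\mathsf{Av}_k(S)\text{ for all }j\,}\ =\ \prod_{j=1}^{\lfloor n/k\rfloor}\mu_{k,q}(\mathsf{Av}_k(S))\ =\ \mu_{k,q}(\mathsf{Av}_k(S))^{\lfloor n/k\rfloor},
\]
which is the desired bound.

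I do not expect any serious obstacle here: the statement is really a packaging of the two cited facts about Mallows permutations together with the trivial observation that a permutation avoiding $S$ on all windows of length $k$ in particular avoids $S$ on a fixed family of disjoint windows. The only point requiring a moment's care is the identification $\mathsf{Av}_k(S)=S_k\setminus S$, which is what lets the block events be written purely in terms of the distribution of each $T_j$.
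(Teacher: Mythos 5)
Your proposal is correct and follows essentially the same argument as the paper: restrict the avoidance event to the $\lfloor n/k\rfloor$ disjoint consecutive blocks of length $k$, then apply Lemmas \ref{lem:indep-seqs} and \ref{lem:induced_permutation_Mallows} to get independence and the $\mu_{k,q}$ marginal for each block's standardization. The only cosmetic difference is that you spell out the identification $\mathsf{Av}_k(S)=S_k\setminus S$ and the extension of the pairwise independence statement to joint independence, both of which the paper leaves implicit.
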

\begin{proof}To begin, observe that if a permutation $\pi\in  S_n$ avoids a set of permutations $S\subseteq  S_k$ consecutively then $\st\!\left(\pi(ik+1),\dots,\pi(ik+k)\right)\notin S $ for any $0\leq i\leq \lfloor n/k \rfloor -1$. The permutations $\left(\st\!\left(\pi(ik+1),\dots,\pi(ik+k)\right)\right)_{i=0}^{\lfloor n/k \rfloor -1}$ are independent by Lemma \ref{lem:indep-seqs} and by Lemma \ref{lem:induced_permutation_Mallows} each permutation $\st\!\left(\pi(ik+1),\dots,\pi(ik+k)\right)$ has distribution $\mu_{k,q}$. Thus, for any $S\subseteq  S_k$, we have
	\begin{equation*} \mu_{n,q}\left(\mathsf{Forb}_n(S)\right) \leq \prod_{i=0}^{\lfloor n/k \rfloor -1}\Pr{\st\!\left(\pi(ik+1),\dots,\pi(ik+k)\right)\notin S}= \mu_{k,q}\left(\mathsf{Forb}_k(S)\right)^{\lfloor n/k \rfloor}, \end{equation*}as claimed. 
\end{proof}

We now have what we need to prove our fist lower bound on treewidth.

\begin{lemma}\label{trwlower}Let $n\geq 1$ be an integer, $\kappa>0$ be any constant, and $0< q \leq  1 - \kappa(\log\log n)^2/\log n$. Then, there exists a constant $c>0$ such that for any $n> 1/c$ we have  	\[\Pr{\trw(\cp(n,q)) <c\left(\sqrt{\frac{\log n}{\log (1/q)} } + 1\right)} \leq  \exp(-\sqrt{n} ).\]
\end{lemma}

\begin{proof} By \eqref{eq:cwdef} and Lemma \ref{treebanddiam1}, for any $k$, the bound $\trw(\cp(k,1))\geq k/200$ holds with probability $1-f(k)$ where $f(k)\rightarrow 0$ as $k\rightarrow \infty$. Thus, there exist fixed constants $C>0$ such that for any $k\geq C$ there is at least one permutation $\sigma_k\in S_k$ satisfying $\trw\left(\layer\left(\sigma_k(P_k), P_k\right)\right) \geq  k/200$.  We wish to show that, for some $k:=k(n,q)$ defined later, a permutation $\pi\sim \mu_{n,q}$ contains a given $\sigma_k\sim \mu_{k,q}$ as a consecutive pattern. We will do this by appealing to Lemmas \ref{lem:indep-seqs} and \ref{lem:induced_permutation_Mallows}. The result will then follow from Lemmas \ref{lem:patterncouplesimple}  and \ref{sub-div}. To begin, by \eqref{eq:mu_n_q_def} we have 
	\begin{equation}\label{eq:lbddonE_j}\mu_{k,q}(\sigma_k) = q^{\inv(\sigma_k)} \cdot (Z_{k,q})^{-1} = q^{\binom{k}{2}}\cdot \prod_{i=1}^{k} \frac{(1 - q)}{1-q^{i}} \geq   q^{k^2}\cdot (1-q)^k .\end{equation}  Recall that $0< q\leq 1 - \kappa(\log\log n)^2/\log n$, for some $\kappa>0$ and let\begin{equation}\label{eq:alphatree} k =  \sqrt{\frac{\alpha \log n}{\log(1/q)  }} \qquad \text{where} \qquad \alpha=\min\left\{\frac{1}{100},\; \frac{\kappa}{25} \right\}. \end{equation}Now, since $\log q \leq q-1 $ by \eqref{logbdd} we have $1/\log(1/q)\leq 1/(1-q)\leq \log(n) /(\kappa (\log\log n)^2)$. Thus,\begin{equation}\label{eq:bddonk}
		k \leq\sqrt{\frac{\alpha \log^2 n}{\kappa (\log\log n)^2}} =  \sqrt{ \frac{\alpha }{\kappa} }\cdot  \frac{  \log  n}{  \log\log  n}\leq \frac{  \log  n}{ 5  \log\log  n}. 
	\end{equation}Returning to the bound on $\mu_{k,q}(\sigma_k) $, by \eqref{eq:lbddonE_j} and \eqref{eq:bddonk}, for any  large enough $n$ we have
	\begin{equation}\label{eq:interprob} \mu_{k,q}(\sigma_k)\geq q^{k^2}\cdot (1-q)^{k} \geq \mathrm{e}^{ k^2 \log q}\cdot \left(\frac{\kappa (\log\log n)^2}{\log n}\right)^{k}    \geq  \mathrm{e}^{-\alpha\log n}\cdot \left(\mathrm{e}^{- \log\log n}  \right)^{\sqrt{ \alpha /\kappa }\cdot  \frac{  \log  n}{  \log\log  n}}   . \end{equation}
	Now by our choice of $\alpha$ in \eqref{eq:alphatree} we have $\alpha+  \sqrt{\alpha /\kappa }   \leq  1/100 + \sqrt{\kappa/(25\kappa)} < 1/4$ . Thus by \eqref{eq:interprob}: 
	\begin{equation}\label{totprob'} \mu_{k,q}\left(\mathsf{Forb}_k(\sigma_k)\right)= 1- \mu_{k,q}(\sigma_k) \leq 1-  n ^{- \left(\alpha  +\sqrt{\alpha /\kappa } \right)   } \leq 1- n^{-1/4}.   \end{equation} Observe that, for suitably large $n$,  $\lfloor n/k \rfloor \geq n^{3/4}$  by \eqref{eq:bddonk}. Thus, by Lemma \ref{lem:patterncouplesimple} and \eqref{totprob'},   
		\begin{equation}\label{eq:bddonmusmall} \mu_{n,q}\left(\mathsf{Forb}_n(\sigma_k)\right) \leq \mu_{k,q}\left(\mathsf{Forb}_k(\sigma_k)\right)^{\lfloor n/k \rfloor } \leq  \left(1- n^{-1/4} \right)^{n^{3/4}}   \leq \mathrm{e}^{-\sqrt{n}}  .  \end{equation}
	It follows from \eqref{eq:alphatree} that there is a constant $C'$ such that if $n\geq C'$, then $k\geq C$. Thus if we take $c\leq \min\{\sqrt{\alpha}/200,\, 1/C'\}$ so that $n\geq 1/c$ is sufficiently large, then \[
		\Pr{\trw\left(\cp(n,q)\right)<c\cdot \sqrt{\frac{\log n}{\log(1/q)  }}} \leq \mu_{n,q}(\mathsf{Forb}_n(\sigma_k)) \leq \mathrm{e}^{-\sqrt{n}},  
	\] by  Lemmas \ref{lem:patterncouplesimple} and \ref{sub-div}, \eqref{eq:bddonmusmall}, and the expression for $k$ given by \eqref{eq:kexpression}. 
\end{proof}

The proof of the following lower bound is similar in its use of Lemma \ref{lem:concpattern}, but to calculate the probability it uses Lemma \ref{lem:largeqpatterncouple} to relate the $q$-Mallows measure to the $1$-Mallows measure.  

\begin{lemma}\label{lem:trwlowerhighq}Let $n\geq 1$ be an integer, and $q\leq 1 - \frac{1}{n}$ satisfy $\lim_{n \rightarrow \infty} q =1$. Then, for sufficiently large $n$,  \[\Pr{ \trw\left(\cp(n,q) \right) <   (1-q)^{-1} /10^5}\leq \exp\left( - n/100 \right) .\]
\end{lemma}

\begin{proof} 
	For an integer $k\geq 1$, we define the set  
	\[ T_k=\left\{\sigma \in  S_k : \trw(\layer(\sigma(P_k),P_k))\geq   k/50 \right\}\subseteq  S_k.\]  By Lemma \ref{lem:expander} $\Pr{\iso(\cp(n,1))\leq \frac{1}{40}}\leq 1000  \cdot k^{7/2}\cdot    \left( \frac34\right)^{k}$ holds for any $k\geq 100$. By \eqref{trw-sep} for any $k$-vertex graph $G$ we have $\trw(G) \geq \lfloor \iso(G)\cdot k \rfloor  -1 $. Thus we conclude that for $k\geq 100$,
	\begin{equation}\label{eq:kavoidbdd}\mu_{k,1}(\mathsf{Forb}_k(T_k)) \leq 1000\cdot   k^{7/2}\cdot    \left( \frac34\right)^{k} \leq \mathrm{e}^{-k/20}.\end{equation} 
	We now fix	
	\begin{equation}\label{eq:kexpression}k = \left\lfloor \frac{1}{1000}\cdot   \frac{1}{1-q} \right\rfloor. \end{equation} Since $\lim_{n \rightarrow \infty} q =1$, we can assume that $k$ satisfies $k\geq 100$ by taking a suitably large $n$. By Lemma \ref{lem:largeqpatterncouple}, \eqref{eq:kavoidbdd} and \eqref{eq:kexpression}, we have  
	\[\mu_{k,q}(\mathsf{Forb}_k(T_k))\leq \mathrm{e}^{9k^2(1-q)}\cdot \mu_{k,q}(\mathsf{Forb}_k(T_k)) \leq \mathrm{e}^{\frac{9}{1000}\cdot k }\cdot \mathrm{e}^{-k/20} \leq \mathrm{e}^{-4k/100}.  \]
Note that $k\leq \lfloor n/10\rfloor$ by \eqref{eq:kexpression} and $q\leq 1-\frac{1}{ n}$. Thus, by Lemma \ref{lem:patterncouplesimple}, for  sufficiently  large $n$,  
	\begin{equation}\label{eq:bddonmu} \mu_{n,q}\left(\mathsf{Forb}_n(T_k)\right) \leq \mu_{k,q}\left(\mathsf{Forb}_k(T_k)\right)^{\lfloor n/k \rfloor } \leq \mathrm{e}^{-(4k/100)\cdot \lfloor n/k \rfloor} \leq \mathrm{e}^{-n/100}  .  \end{equation}

	Recall that $\mu_{n,q}(\mathsf{Forb}_k(T_k))$ is the probability $\pi \sim \mu_{n,q}$ avoids all permutations $\sigma  \in T_k$ consecutively. Thus, for any $\pi \in  S_n$ such that $\pi\notin \mathsf{Forb}_n(T_k)$, Lemmas \ref{sub-div} and \ref{lem:concpattern} yield   \[\trw\left(\layer(\pi(P_n),P_n)\right) \geq \min_{\sigma\in T_k}\trw\left(\layer(\sigma(P_k),P_k)\right)\geq  k/50.\] Recall also that $ \layer(\pi(P_n),P_n) \sim \cp(n,q)$ as $\pi \sim \mu_{n,q}$. Thus by \eqref{eq:bddonmu}, for large $n$, we have \begin{equation*} 
		\Pr{\trw\left(\cp(n,q)\right)<k/50} \leq\mu_{n,q}(\mathsf{Forb}_n(T_k)) \leq \mathrm{e}^{-n/100}. 
	\end{equation*} The result now follows from the expression for $k$ given by \eqref{eq:kexpression}. \end{proof}

\subsection{Upper Bound on Cutwidth} \label{sec:cwupper}
Recall the definition of the cutwidth of a graph $G$ given by \eqref{eq:cwdef}: 
\begin{equation}\label{eq:cut}\cw(G)= \min_{f:V\rightarrow \mathbb{Z}, \text{ injective}} \; \max_{x\in \mathbb{R}}\;\left| \left\{ ij \in E(G) : f(i)\leq x <f(j)\right\}\right|.\end{equation}In this section we  prove Lemma \ref{cwupper}, which gives the upper bounds on cutwidth in Theorem \ref{thm:treepathcut}. 

\begin{lemma}\label{cwupper}Let $n\geq 1$ be an integer, $c>0 $ be any constant, and $0\leq q \leq  1 -  c(\log\log n)^2 / \log n$. Then there exists a constant $C $ such that \[\Pr{\cw(\cp(n,q)) >C\left(\sqrt{\frac{\log n}{\log (1/q)} } + 1\right)} =\BO{\frac{1}{n^3}} .\]
	Additionally, if $q\geq 1 - (\log\log n)^2 /\log n$, then \[\Pr{\cw(\cp(n,q)) > \frac{5}{1-q}\log\left(\frac{1}{1-q} \right) } =\BO{\frac{1}{n^3}}.\]
\end{lemma}

We will begin by outlining a sketch of the proof of Lemma \ref{cwupper}. 

\textit{Proof Sketch:} For an upper bound on $\cw\left(\cp(n,q)\right)$ we fix $f:[n] \rightarrow  [n]$ in \eqref{eq:cut} to be the identity map denoted $\ID$. That is, we order the vertices of $\cp(n,q)$ using the ordering of the un-permuted path $P_n$ with edges $(i)(i+1)$ for $i\in [n-1]$. We bound the number of edges crossing any vertex $i$ by bounding the number of elements with values $j>i$ are inserted next to elements $k<i$ by the $q$-Mallows process. To do this we show that, for $b_n= \lceil \frac{8\log n}{\log(1/q)}\rceil$ given by \eqref{eq:b} and some suitable $\ell,L$ where $L\geq \ell$,  the following events hold with high probability: \begin{enumerate}[label=(\roman*)]\itemsep=0pt
\item\label{ustep1} no insert position $v_i$ has value greater than $b_n$,
    \item\label{ustep3} after $L$ steps the leftmost $b_n$ places contain only elements added at most $L$ steps ago, 
        \item\label{ustep2} within any window of $L$ steps there are at most $\ell$ values of $v_i$ greater than $\ell$.
\end{enumerate}

The events \ref{ustep1} and \ref{ustep2} control the number of long edges created from new entries being added far away from the left-hand end of the process. The event \ref{ustep3} is  more subtle, it ensures that the left-hand end of the permutation grown by the $q$-Mallows process cannot retain entries that were inserted long ago, again preventing long edges caused by new elements lying next to old ones. We show that if \ref{ustep1} -  \ref{ustep2} hold, then the number of edges crossing any vertex is $\mathcal{O}(\ell)$.

Having concluded the proof sketch we now formalise event described in \ref{ustep3}. To do this we introduce the sparse flush event, which is a relaxation of the local flush event $\mathcal{L}_k$ given by \eqref{eq:localflushingevent}. For positive integers $n,k,b$ where  $1\leq k\leq n$ and $b\leq n-k$ and real $L\leq n-k$ we say there is a \textit{sparse flush} $\mathcal{S}(k,b,L)$ of $b$ items at step $k$ with length $L$ if the following event holds:
\begin{equation}\label{eq:sparseflushingevent}
\mathcal{S}(k,b,L)=\left\{\text{there exist $k\leq t_1< \cdots < t_{b} \leq k+L $ such that $v_{t_i}\leq i $ for all $1\leq i \leq b$}\right\}.
\end{equation}

To give an intuition on the sparse flush we must first recall the local flush event \eqref{eq:localflushingevent} given by $\mathcal{L}_k= \left\{\text{for each } k <i \leq  k+ b_n(q)\text{ we have }v_i\leq i-k \right\} $, where $b_n=\left\lceil 8\log(n)/ \log(1/q) \right\rceil$. Recall also that no insert position is greater than $b_n$ w.h.p.\ by \eqref{eq:probnobigv}. Thus, if the local flush event $\mathcal{L}_k$ holds for some vertex $k$ then w.h.p., no element $j\geq k +b_n$ is inserted next to any element $i\leq k$. If we choose $b_n=\left\lceil 8\log(n)/ \log(1/q) \right\rceil$ in \eqref{eq:sparseflushingevent} then, conditional on $\mathcal{S}(k,b_n,L)$, the values $(v_{t_i})_{i=1}^{b_n}$ form a (non-consecutive) local-flush, that is $v_{t_1}\leq 1, v_{t_2}\leq 2,\dots, v_{t_{b_n}}\leq b_n$. This sequence of insert positions ensures that w.h.p.\ no element $j\geq k +L$ is inserted next to any element $i\leq k$, and thus no such edge $ij$ is present in the permuted path. If $q$ is very close to $1$, the local flush event only occurs with small probability; the next two lemmas show that if we take $L$ significantly larger than $b_n$, then the sparse flush holds w.h.p., even for $q$ tending to $1$ quite fast.

\begin{lemma}\label{lem:sparseflushlem}Let $n,b\geq 1$ be integers, $0<q< 1$, and $\lambda\geq 1$. Let $L= \lambda \left(b + \frac{q}{1-q} + \frac{1}{\log q}\log \frac{1-q}{1-q^{b}}  \right)$ and $1\leq k\leq n-L$ be an integer. Then, \[\Pr{ \mathcal{S}(k,b,L) ^c}\leq  \left(\frac{(1-q)q^b}{1-q^{b}} \right)^{\lambda -1- \log \lambda}.\]
\end{lemma}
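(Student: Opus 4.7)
The plan is to bound $\Pr{\mathcal{S}(k,b,\ell)^c}$ via a greedy procedure and Lemma \ref{lem:jansontail}. Set $T_0 = k-1$, and for $1 \leq i \leq b$ let $T_i = \min\{j > T_{i-1} : v_j \leq i\}$, with $\tau_i = T_i - T_{i-1}$. Then $\mathcal{S}(k,b,\ell)$ holds precisely when $T_b \leq k+\ell$, so $\Pr{\mathcal{S}(k,b,\ell)^c} \leq \Pr{\sum_{i=1}^b \tau_i > \ell}$.

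By \eqref{eq:Probv_klowerthani}, $\Pr{v_j \leq i} = (1-q^i)/(1-q^j) \geq 1-q^i$ uniformly in $j$. Combined with the independence of $(v_j)$, a standard uniform-variable coupling produces independent random variables $G_i \sim \geo{1-q^i}$ with $\tau_i \leq G_i$ almost surely; the disjointness of the $v_j$'s consumed by different rounds of the greedy procedure is what makes it possible to take the $G_i$'s independent. Thus, writing $X = \sum_{i=1}^b G_i$, Lemma \ref{lem:jansontail} applies with $p_i = 1-q^i$, so $p_* = 1-q$ and $\mu = \sum_{i=1}^b (1-q^i)^{-1} = b + \sum_{i=1}^b q^i/(1-q^i)$.

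The remaining work is an integral comparison for $\mu$. Since $x \mapsto q^x/(1-q^x)$ is positive and strictly decreasing, using $\int q^x/(1-q^x)\,dx = -\log(1-q^x)/\log(1/q)$ one obtains
\[\frac{\log((1-q^b)/(1-q))}{\log(1/q)} \leq \sum_{i=1}^b \frac{q^i}{1-q^i} \leq \frac{q}{1-q} + \frac{\log((1-q)/(1-q^b))}{\log q}.\]
The upper bound shows $\lambda \mu \leq \ell$, so $\Pr{\sum_{i=1}^b \tau_i > \ell} \leq \Pr{X \geq \lambda\mu}$; the lower bound gives $\mu \geq b + \log((1-q^b)/(1-q))/\log(1/q)$, which, together with the identity $q^{\log((1-q^b)/(1-q))/\log(1/q)} = (1-q)/(1-q^b)$, yields $q^\mu \leq (1-q)q^b/(1-q^b)$. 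Combining these with Lemma \ref{lem:jansontail},
\[\Pr{\mathcal{S}(k,b,\ell)^c} \leq \lambda^{-1} q^{\mu(\lambda - 1 - \log\lambda)} \leq \left(\frac{(1-q)q^b}{1-q^b}\right)^{\lambda - 1 - \log\lambda},\]
where the last step uses $\lambda \geq 1$ (so $\lambda^{-1} \leq 1$) and $\lambda - 1 - \log\lambda \geq 0$. The only non-routine step is the stochastic-domination argument; everything else is a calculation.
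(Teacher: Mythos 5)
Your proposal is correct and takes essentially the same route as the paper's proof: decompose the wait for the $b$ good insertions into waiting times dominated by independent geometrics with success probabilities $1-q^i$, bound $\mu=\sum_{i=1}^b(1-q^i)^{-1}$ by integral comparison so that $\lambda\mu\leq\ell$ and $q^{\mu}\leq (1-q)q^b/(1-q^b)$, and apply Lemma \ref{lem:jansontail}. The only blemish is the intermediate antiderivative, which should be $\log(1-q^x)/\log(1/q)$ rather than its negative, but the two displayed bounds you actually use are correct, so nothing is affected.
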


\begin{proof}Recall that each element $x$ in the $q$-Mallows process $(r_n)$ is inserted at step $x$ and relative position $v_x\sim\nu_{x,q}$. The sparse flush $\mathcal{S}(k,b,L)$ consists of a sequence of $b$ steps $(t_{i})_{i=1}^b$ where element $t_i$ is inserted at relative position at most $i$. We call these steps \textit{good}. Conditional on $t_{i-1}$, the $(i-1)$\textsuperscript{th} good step, we define the random variable $\tau_i = t_i - t_{i-1}$ to be the additional number of steps we must wait for the $i$\textsuperscript{th} good step (where $1\leq i\leq b$ and $t_0=k$). 
	
	Let $T = \sum_{i=1}^b \tau_i$ be the total number of steps we have to wait to have $b$ good steps and observe that $\mathcal{S}(k,b,L) =\{T\leq L  \}$. We now aim to bound $T$. Since $(v_x)_{x\geq 0}$ are independent and $\Pr{v_k\leq i} \geq 1- q^i$ by \eqref{eq:Probv_klowerthani}, it follows that $\tau_i\preceq X_i$, for independent $X_i \sim \geo{p_i}$, where $p_i = 1-q^i$. That is, the time between the $i-1$\textsuperscript{th} and $i$\textsuperscript{th} good step is stochastically dominated by a geometric random variable with success probability $1-q^i$. We now set
	\[X = \sum_{i=1}^b X_i \quad \text{ and }\quad \mu= \Ex{X} = \sum_{i=1}^b \frac{1}{p_i}.\]Observe that $T\preceq X$ and so $\Pr{\mathcal{S}(k,b,L)^c}  \leq \Pr{X>L}$. We bound the latter probability using Lemma \ref{lem:jansontail}. To do this we need bounds on $\mu = \Ex{X}$. For the upper bound: 
	\begin{equation*}\mu = \sum_{i=1}^b \frac{1}{1 - q^i} \leq \frac{1}{1-q} + \int_{1}^b\frac{1}{1-q^x}\;\mathrm{d} x= \frac{1}{1-q} + \int_{1}^b\left(1 + \frac{q^x}{1-q^x}\right)\;\mathrm{d} x,  \end{equation*}since $1/(1-q^x) = 1 +q^x/(1-q^x)$ is decreasing in $x$. Thus, by integrating we have
	\begin{equation}\label{eq:sumbdd1}
	\mu\leq  \frac{1}{1-q} + b -1 -  \left[\frac{\log\left(1-q^x\right)}{\log q}   \right]_{x=1}^b = b + \frac{q}{1-q} + \log\left(\frac{1-q}{1-q^{b}} \right)/\log(q).  \end{equation} Similarly, we can obtain the following lower bound:
	\begin{equation}\label{eq:sumbdd2}\mu= \sum_{i=1}^b \frac{1}{1 - q^i}\geq \int_{1}^{b}\frac{1}{1-q^x}\;\mathrm{d} x \geq b + \log\left(\frac{1-q}{1-q^{b}} \right)/\log(q).\end{equation}   
	We apply Lemma \ref{lem:jansontail} to $X$ where $p_*=\min_{i}p_i\geq 1-q$, $\mu=\Ex{X}$ and $\lambda>1$: 
	\begin{align*} 
	\Pr{ \mathcal{S}(k,b,\lambda\mu) ^c} \leq \Pr{X \geq \lambda\mu } \leq \lambda^{-1}(1-p_*)^{\mu \left(\lambda -1- \log \lambda\right)}\leq q^{\mu \left(\lambda -1- \log \lambda\right).}
	\end{align*}We now insert the lower bound on $\mu$ from \eqref{eq:sumbdd2} into the bound in the line above to give
	\begin{align*}\Pr{ \mathcal{S}(k,b,\lambda\mu) ^c}   \leq  q^{\left(b + \log\left(\frac{1-q}{1-q^{b}} \right)/\log(q)\right)\cdot  \left(\lambda -1- \log \lambda\right)}  = \left(\frac{(1-q)q^b}{1-q^{b}} \right)^{ \lambda -1- \log \lambda}. \end{align*}To conclude we observe that $L \geq \lambda \mu$ by \eqref{eq:sumbdd1}. \end{proof} 
We now plug some specific values into Lemma \ref{lem:sparseflushlem} for use in the proof of Lemma \ref{cwupper}.

\begin{lemma}\label{cor:probS} Let $n\geq 1$ be an integer, $0<q < 1$, and $b_n= \left\lceil \frac{8\log n}{\log(1/q)}\right\rceil\geq 1$. Then for any  $L\geq   \frac{100}{1-q}\left(\frac{1}{1-q} +\log n \right)$ and integer $1\leq k \leq n- L$ we have $\Pr{ \mathcal{S}(k,b_n,L) ^c} = \lo{n^{-10}}.$
\end{lemma}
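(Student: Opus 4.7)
The plan is to apply Lemma \ref{lem:sparseflushlem} directly with a well-chosen constant $\lambda$, exploiting the fact that the probability $\Pr{\mathcal{S}(k,b,\ell)^c}$ is monotonically non-increasing in $\ell$ (a longer window can only help the sparse flush occur). So it suffices to identify a constant $\lambda$ for which (i) $\ell \geq \lambda \mu$, where $\mu = b + \frac{q}{1-q} + \log\!\left(\frac{1-q}{1-q^b}\right)/\log q$ is the quantity from Lemma \ref{lem:sparseflushlem}, and (ii) the bound $\bigl(\frac{(1-q)q^b}{1-q^b}\bigr)^{\lambda-1-\log\lambda}$ from Lemma \ref{lem:sparseflushlem} is $\lo{n^{-10}}$.

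For (ii), I will use the definition of $b$ to note $q^b \leq q^{8\log n/\log(1/q)} = n^{-8}$, and since $b \geq 1$ forces $1 - q^b \geq 1-q$, one has $\frac{(1-q)q^b}{1-q^b}\leq q^b \leq n^{-8}$. Thus the bound reduces to $n^{-8(\lambda-1-\log\lambda)}$, and setting $\lambda = 9$ yields $8(\lambda -1-\log\lambda) = 8(8 - \log 9) > 46$, which is amply $\lo{n^{-10}}$.

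For (i), I need to show $\ell \geq 9\mu$ for all sufficiently large $n$. I will bound each term of $\mu$ separately using $\log(1/q) \geq 1-q$ from \eqref{logbdd}: the first term satisfies $b \leq \frac{8\log n}{1-q}+1$; the second is at most $\frac{1}{1-q}$; the third, rewritten as $\log(1+q+\cdots+q^{b-1})/\log(1/q) \leq \log b/\log(1/q) \leq \log b/(1-q)$. Since $\ell\leq n-k\leq n$ by the hypothesis on $k$, we get $b \leq 9n\log n$ and hence $\log b\leq 2\log n$ for large $n$, so $\mu \leq \frac{11\log n}{1-q}$ for all large $n$. Comparing against $\ell \geq \frac{100\log n}{1-q}$ gives $\ell/\mu \geq 100/11 > 9$, as required.

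There is no serious obstacle here; the work is simply careful bookkeeping of the three contributions to $\mu$ and confirming that the slack $\lambda=9$ is large enough to drive the exponent $8(\lambda-1-\log\lambda)$ past $10$. The only mild subtlety is the bound on $\log b$, where I use the given hypothesis $\ell\leq n-k$ (so $1/(1-q) = O(\sqrt n)$) to control $b$ polynomially in $n$.
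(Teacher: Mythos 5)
Your proposal is correct and follows essentially the same route as the paper: both invoke Lemma \ref{lem:sparseflushlem} with a fixed constant $\lambda$ (the paper takes $\lambda=10$, you take $\lambda=9$), use monotonicity of the sparse flush in the window length, bound $q^b\leq n^{-8}$, and check $\ell\geq\lambda\mu$ by estimating the three terms of $\mu$. The only cosmetic difference is in the third term of $\mu$: the paper bounds it by $-\log(1-q)/\log(1/q)\leq (1-q)^{-2}$, while you bound it by $\log b/(1-q)$ and use $\ell\leq n$ to control $\log b$ — both are valid bookkeeping.
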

\begin{proof}Observe that $q^{b_n}\leq n^{-8} $ and $\mathcal{S}(k,b_n,L) \supseteq \mathcal{S}(k,b_n,D)$ for any $L\geq D $. Now let
	\begin{equation*}D=10 \left(b_n + \frac{q}{1-q} + \log\left(\frac{1-q}{1-q^{b_n}} \right)/\log(q) \right). \end{equation*} Applying the inequalities  $-1/\log q\leq 1/(1-q) $ and $-\log(1-q)\leq 1/(1-q)$ yields \[D\leq 10 \left(-\frac{8\log n}{\log q } + 1 + \frac{q}{1-q} - \frac{\log\left(1-q  \right)}{-\log(q)} \right)  \leq \frac{100}{1-q}\left(\frac{1}{1-q} +\log n \right).\]Finally, $\Pr{ \mathcal{S}(k,b_n,D) ^c}\leq  \left(\frac{(1-q)q^b_n}{1-q^{b_n+1}} \right)^{10 -1- \log 10} \leq \left(\frac{n^{-8} }{1-n^{-8}} \right)^{6}=\lo{n^{-10}}$ by Lemma \ref{lem:sparseflushlem}.
\end{proof}

We are now ready to prove Lemma \ref{cwupper}, which provides upper bounds on the cutwidth.

\begin{proof}[Proof of Lemma \ref{cwupper}] As mentioned at the start of this section we bound the cutwidth from above by fixing the injection $f:[n] \rightarrow  [n]$ in  \eqref{eq:cut} to be the identity $\ID:[n]\rightarrow [n]$.

	Before beginning in earnest, we treat the case $0\leq q\leq 1/n^2$ as this is a straightforward deduction from an earlier result. Indeed, if $q< 1/n^2$ then by Theorem \ref{thm:displacement} we have 	\begin{equation}\label{eq:smallq}
	\Pr{\cw(\cp)\geq 2}\leq \sum_{i\in[n]} \Pr{|\sigma(i)-i|\ge 2}\le n\cdot 2q^2 = \BO{n^{-3}}.
	\end{equation} 
	
	Thus, from now on we can assume $q\geq 1/n^2$. The remainder of the proof is as follows. First, we introduce the event $\mathcal{W}(\ell,L)$, which essentially controls how many long edges originate from any small set of consecutive vertices. We then prove a claim bounding the cutwidth conditional on this event. To apply the claim we then bound the probability that the event fails, breaking into two cases for different values of $q$. Finally, we conclude by relating the different cases to the lemma's statement. We now begin with the formal definition of the event $\mathcal{W}(\ell,L)$.

	Let $b_n=\left\lceil  8\log(n) / \log(1/q)\right\rceil$ and define the event $\ce=\bigcap_{k\in[n]}\{v_k\leq b_n \} $. For integers $i\in[n]$  and $L\geq \ell \geq 1$, we recall the sparse flush event $\mathcal{S}(i,b_n,L)$ from \eqref{eq:sparseflushingevent}, and also define the event
	\begin{align*} 
	\mathcal{B}(i,\ell,L) &=\left\{\text{there are at most $\ell$ values $k\in [i,\max\{ i+L, n\}]$ such that $\nu_k >\ell$}\right\}. 
	\end{align*}
	
	The aforementioned event $\mathcal{W}( \ell,L)$ is an intersection of these previous three events
	\[\mathcal{W}( \ell,L)= \ce\cap\left(\bigcap_{1\leq i\leq n-L}\mathcal{S}(i,b_n,L)\right)\cap \left(\bigcap_{i\in [n]}\mathcal{B}(i,\ell,L)\right). \] 
	The first two intersected events in $\mathcal{W}( \ell,L)$ imply that for every element $i$ inserted by the $q$-Mallows process, at most the $L$ elements following it can potentially create an edge that crosses the vertex $i\in[n]$. The final event in the intersection states that of these $L$ elements, all but at most $\ell$ will be inserted in the $\ell$ closest positions to the left-hand end of the permutation at the time of insertion. The next claim shows that if this event holds then the cutwidth is bounded.

	\begin{claim}
		For any integers $L\geq \ell\geq 1 $ we have $\mathcal{W}( \ell,L)\subseteq  \{\cw(\cp) \leq 4(\ell +1) \}. $
	\end{claim}
	\begin{poc} Recall that we fixed the injection $f$ in the definition of the cutwidth \eqref{eq:cut}  to be $\ID$. Under $f$ the edges of the un-permuted path $P_n\subseteq \layer(\sigma(P_n),P_n)$ contribute at most $1$ to the cutwidth. We now bound the contribution by the edges of the permuted path $ \sigma(P_n)\subseteq \cp(n,q)$.    
		
		Given vertices $i,j,k\in [n]$ we say that the edge $jk\in E(\sigma(P_n))$ is \textit{bad} (with respect to  $i$) if $j<i$ and $k> i$. We claim that if $\mathcal{W}( \ell,L)$ holds then, for any $i\in [n]$, there are at most $4\ell $ edges which are bad for $i$. We show this by keeping track of where elements are inserted during the $q$-Mallows process $(r_a)_{a=1}^n$. By definition if an edge is bad for $i$, then exactly one of its endpoints must have value greater than $i$. If $i=n$ then there are no bad edges for $i$, so we assume that $i<n$. Observe for any element $k\in\{i+1,\dots, n\}$ there are at most $2$ bad edges with endpoint $k$, since $k$ can be adjacent to at most two elements $j<i$ in the final permutation $r_n$. We now partition the elements $k\in \{i+1,\dots, n\}$ into three  disjoint sets $A_{i}$, $B_i$ and $C_i$  based on their value $k$ and $v_k$, the insert position of the element $k$ relative to the left-hand end of $r_{k-1}$:
		\[A_i =\{k\in \{i+1, \dots, i+L \} : v_k> \ell\},\quad B_i =  \{i+1, \dots, i+L \} \backslash A_i,  \quad\text{and} \quad C_i = \{i+L+1, \dots, n \} .\] See Figure \ref{fig:cutwidth} for an example. We will count the contribution of each set to the bad edges. 	
		
		\textit{Contribution from $(A_i)$}: Conditional on $\mathcal{B}(i,\ell,L) $, we have $|A_i|\leq \ell$. Thus, in total the elements in $A_i$ contribute at most $2\ell$ bad edges. 
		
		\textit{Contribution from $(B_i)$}: The left-most $\ell$ positions of $r_i$ each have value at most $i$.  By the definition of the $q$-Mallows process \eqref{eq:MallowsProc-1}, no additional element with value at most $i$ can ever occupy any of the $\ell$ left-most positions. Since each element $k\in B_i$ is inserted within the $\ell$ left-most  places, there can only be $2\ell$ total bad edges for $i$ with an endpoint $k \in B_i$. 
		
		\textit{Contribution from $(C_i)$}: Observe that if $i\geq n-L$ then $C_i$ is empty. Otherwise, as $\mathcal{S}(i,b_n,L)$ holds, we have $r_{k}(j)>i$ for all $k>i+L$ and $j\leq  b_n$. That is, after step $i+L$ of the $q$-Mallows process the permutation will only have elements with value at least $i$ in its $b_n$ left-most positions. However, conditional on $\ce $, no element is inserted at position greater than $b_n$ relative to the left-hand end of the permutation. Thus, the elements $k>i+L$ create no bad edges.
		
		Collecting these contributions, if we condition on $\mathcal{W}(\ell,L)$ then there are in total at most $4\ell$ bad edges for any $i\in [n]$. For any $x\in \mathbb{R}$ there exists an $i$, such that all edges crossing $x$ are either bad for $i$ or have $i$ as an endpoint. The result follows as there are at most $4$ edges with endpoint $i$, since $\cp(n,q)$ has maximum degree $4$.  
	\end{poc}

	\begin{figure}  
		\center\begin{tikzpicture}[xscale=0.5,yscale=0.6,knoten/.style={thick,minimum size=.62cm,circle,draw=black,fill=white},gnoten/.style={thick,minimum size=.62cm,draw=black},onoten/.style={ thick,minimum size=.5cm ,shape=diamond,draw=black},pnoten/.style={ thick,minimum size=.62cm,regular polygon,regular polygon sides=6,draw=black},edge/.style={black,very thick}, bedge/.style={color=black,very thick},dotedge/.style={dotted,very thick}]
		\foreach \x in {1,...,3}
		\node[knoten] (\x) at (2*\x,5) {$\! \x \!$};
		\foreach \x in {4,9}
		\node[gnoten] (\x) at (2*\x,5) {$\! \x \!$};
		\foreach \x in {5,6,7,8}
		\node[onoten] (\x) at (2*\x,5) {$\! \x \!$};
		\node[onoten] (10) at (20,5) {$\!\! 10 \!\!$};
		\node[onoten] (11) at (22,5) {$\!\! 11 \!\!$};
		\foreach \x in {12,...,15}
		\node[pnoten] (\x) at (2*\x,5) {$\!\!\!\x\!\!\!$};
		
		\draw[edge] (1) to (2);
		\draw[edge] (2) to (3);
		\draw[edge] (3) to (4);
		\draw[edge] (4) to (5);
		\draw[edge] (5) to (6); 
		\draw[edge] (6) to (7); 
		\draw[edge] (7) to (8);
		\draw[edge] (8) to (9);
		\draw[edge] (9) to (10);
		\draw[edge] (10) to (11);
		\draw[edge] (11) to (12);
		\draw[edge] (12) to (13);
		\draw[edge] (13) to (14);
		\draw[edge] (14) to (15);

		\draw[bedge] (13) to[out=35,in=145 ] (15);
		\draw[bedge] (13) to[out=25,in=155 ] (14);
		\draw[bedge] (10) to[out=28,in=152 ] (14);
		\draw[bedge] (10) to[out=28,in=152 ] (11);
		\draw[bedge] (11) to[out=25,in=155 ] (12);
		\draw[bedge] (8) to[out=25,in=155 ] (12);
		\draw[bedge] (6) to[out=32,in=148 ] (8);
		\draw[bedge] (2) to[out=25,in=155 ] (6);
		\draw[bedge] (2) to[out=25,in=155 ] (7);
		\draw[bedge] (7) to[out=32,in=148 ] (9);
		\draw[bedge] (5) to[out=30,in=150 ] (9);
		\draw[bedge] (3) to[out=30,in=150 ] (5);
		\draw[bedge] (1) to[out=25,in=155 ] (3);
		\draw[bedge] (1) to[out=25,in=155 ] (4);

		\draw[dotedge] (7,5) to (7,6.5);
		\draw[dotedge] (7,5) to (7,4.4);
		
		\end{tikzpicture}
		\vspace*{-2mm}
		
		\caption{\label{fig:cutwidth}A visual aid to the claim in proof of Lemma \ref{cwupper}. Observe the `random' sequence $\mathbf{x}=(1,1,2,4,2,1,3,1,5,1,2,3,1,2,1)$ satisfies $\mathcal{W}(\ell,L)$ where with $\ell =3 $, $L=8$ and $b=5$. The permutation generated by the $q$-Mallows process with input $\mathbf{x}$ is $\pi=(15,13,14,10,11,12,8,6,2,7,9,5,3,1,4)$. We let $x=3.5$ be the location of the cut (as this maximises the cut) and observe that there are $3$ bad edges for vertex $3$. We also have the sets $A_3=\{4,9\}$, $B_{3} =\{5,6,7,8,10,11\}$ and $C_3=\{12,13,14,15\}$ whose vertices are represented as squares, diamonds and hexagons respectively.}
	\end{figure}
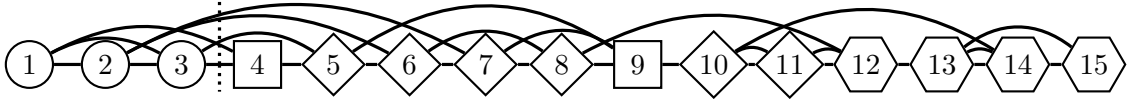

	 We now bound the failure probability of each of the constituent events in $\mathcal{W}(\ell,L)$. To begin, we set 
	\begin{equation*} L= \frac{100}{1-q}\left(\frac{1}{1-q} +\log n \right).\end{equation*}

	Recall that $q\geq 1/n^2$ and so $b_n=\left\lceil  8\log(n)/\log(1/q)\right\rceil \geq \left\lceil  8\log(n)/(2\log n)\right\rceil \geq 1$. Thus, by \eqref{eq:probnobigv}, and Lemma \ref{cor:probS} (since $b_n\geq 1$) plus the union bound, respectively, we have 
	
	\begin{equation}\label{eq:probES} \Pr{\ce^c}\leq n^{-7}  \qquad \text{and} \qquad 
	\Pr{\cup_{1\leq i\leq n-L}\left( \mathcal{S}_{i,b_n,L}\right)^c} \leq n\cdot \lo{n^{-10}}=\lo{n^{-9}}.
	\end{equation}
	
	We bound $\Pr{\mathcal{B}(i,\ell,L)^c}$ by showing the number of large inputs to the $q$-Mallows process is stochastically dominated by a sum of i.i.d.\ Bernoulli random variables. Observe that $\Pr{v_k > \ell}\leq q^\ell$ for any integers $\ell\leq k $ by \eqref{eq:nuupperbdd}, and $\Pr{v_k > \ell} =0$ if $k<\ell$. Let $\{X_j\}_{j \in [L]}$ be a set of independent Bernoulli random variables with success probability $q^\ell$. Then,  since each $v_k$ is independent, for any integers $\ell,L\geq 1$ and $i\in [n]$: \begin{equation*} \left|\{ k\in [i, i+L]: v_k > \ell\}\right| \preceq \sum_{j=1}^L X_j.\end{equation*}
	If $\ell>Lq^\ell$ and $\ell >\mathrm{e}$, then taking $\delta = \ell/(Lq^\ell) -1 >0$ in Lemma \ref{Chertail} we have
	\begin{equation}\label{eq:tailbdd}\Pr{ \sum_{j=1}^L X_j>\ell } \leq \left(\frac{\mathrm{e}^\delta }{(1+\delta)^{1+\delta} } \right)^{Lq^\ell}\leq \left(\frac{\mathrm{e}^{\frac{\ell}{Lq^\ell}-1} }{(\frac{\ell}{Lq^\ell})^{\frac{\ell}{Lq^\ell}} } \right)^{Lq^\ell} \leq \left(\frac{\mathrm{e}Lq^\ell}{\ell} \right)^\ell\leq \mathrm{e}^{ \log (Lq^\ell)\cdot \ell  }. \end{equation} We now break into two cases making different choices for $\ell$ depending on the value of $q$. These cases roughly correspond to the two bounds in the lemma. First, note that by \eqref{logbdd} we  have\begin{equation}\label{eq:qlog}
	1-q \leq \log(1/q) \leq (1-q)/q. 
	\end{equation}

	\noindent\textbf{Case (i)} $\left[ 	 1/n^2\leq q \leq  1 - \frac{(\log\log n)^2 }{\log n}\right]$\textbf{:} Set $\ell =10\cdot \sqrt{\log(n) /\log(1/q)  }$, and so by \eqref{eq:qlog} we have \begin{equation}\label{eq:smallell}\ell =10\cdot \sqrt{\frac{\log n}{\log(1/q)} } \leq 10\cdot\sqrt{\frac{\log n}{1- q} } \leq  10\cdot\sqrt{\frac{\log^2 n}{(\log\log n)^2 } } =\frac{10\log n}{\log\log n}. \end{equation}
	
	The lower bound on $q$ in this case ensures that $\ell\geq 10\cdot \sqrt{\frac{\log n}{\log(n^2)} } > \mathrm{e}$, and \eqref{eq:qlog} gives \[L\cdot q^\ell = L\cdot \mathrm{e}^{-10\sqrt{\log(1/q)\log n}}\leq L\cdot \mathrm{e}^{-10\sqrt{(1-q)\log n}} \leq  L\cdot \mathrm{e}^{-10\sqrt{\frac{(\log\log n)^2}{\log n }\cdot  \log n}} \leq  (\log n)^2\cdot\frac{1}{(\log n)^{10}} ,\]for large $n$. Thus, for large $n$, $Lq^\ell< \ell$ and so \eqref{eq:tailbdd} and the bound on $\ell$ from \eqref{eq:smallell} give  \begin{equation}\Pr{ \sum_{j=1}^L X_j>\ell }   \leq \left(\frac{\mathrm{e}Lq^\ell}{\ell} \right)^\ell \leq L^\ell \cdot q^{\ell^2} \leq \left(\log^2n\right)^{\frac{10\log n}{\log\log n}} \cdot \mathrm{e}^{-100\log n} \leq n^{ -80}.   \tag*{$\lozenge$}  \end{equation}

	\noindent\textbf{Case (ii)} $\left[q \geq  1 - \frac{(\log\log n)^2 }{\log n}\right]$\textbf{:} Set $\ell  = 5\log (1-q)/\log q= 5\log\left( \frac{1}{1-q}\right)/ \log( 1/q) $. Now, \[ L\cdot q^{\ell} = \frac{100}{1-q}\left(\frac{1}{1-q} + \log n \right)\cdot \left(1-q\right)^{5}\leq 100\left(\frac{(\log\log n)^2}{\log n}\right)^3 + 100(\log n)  \left(\frac{(\log\log n)^2}{\log n}\right)^4<\frac{1}{\log n},   \] and thus $ \log(Lq^\ell)<-\log \log n$. Also by \eqref{eq:qlog} we have \begin{equation*}\ell =\frac{5\log \frac{1}{1-q}}{ \log( 1/q)}\geq  \frac{q \cdot 5\log \frac{1}{1-q}}{ 1- q} \geq  \frac{5q}{ 1- q}\cdot \log \frac{\log n}{(\log\log n)^2 }.\end{equation*}Hence, for suitably large $n$, we have \[\ell \geq 5\left(1-\frac{(\log\log n)^2 }{\log n}\right) \frac{\log n}{(\log\log n)^2 } \cdot \left(\log \log  n - 2\log\log \log n \right) \geq \frac{4\log n }{\log \log n }>\mathrm{e} . \]To conclude this case, by \eqref{eq:tailbdd} and the above, for suitably large $n$ we have 
	\begin{equation}\Pr{ \sum_{j=1}^L X_j>\ell }  \leq \mathrm{e}^{ \log (Lq^\ell)\cdot \ell  }\leq  \mathrm{e}^{- \log(\log n) \cdot \frac{4\log n }{\log \log n } }  = n^{-4}.\tag*{$\lozenge$}  \end{equation}   
It follows from Cases (i) and (ii) that for large $n$ (and the appropriate $\ell:=\ell(q)$) we have \[\Pr{\cup_{i\in [n]}\mathcal{B}(i,\ell,L)^c}\leq n \cdot \Pr{ \sum_{i=1}^L X_i>\ell }\leq n\cdot n^{-4}=n^{-3},\] by the union bound. Thus, again for suitably large $n$, combining this with \eqref{eq:probES} gives \[\Pr{\mathcal{W}(\ell,L)^c} \leq n^{-7} + \lo{n^{-9}} + n^{-3}  = \BO{n^{-3}}.\] Thus by the claim and Cases (i) and (ii) we have $\Pr{\cw(\cp(n,q))> 4(\ell+1)}=\BO{n^{-3}}$ if $q\geq n^{-2}$. All that remains is to relate the cases to the statement of the lemma. 
	
	For the first statement of this lemma, the range $0\leq q\leq 1 -(\log\log n)^2/\log n$ is covered by Case (i) and the fact that $\Pr{\cw(\cp(n,q))\geq 2}= \BO{n^{-3}}$ if $q\leq 1/n^2$ from \eqref{eq:smallq}. We now let $q=1- c(\log\log n)^2 /\log n$, where $0<c\leq 1$ is fixed. Then, for suitably large $n$, we have $\log \frac{1}{1-q}= \log\left(\frac{\log n}{c(\log\log n)^2}\right) \leq  \log\log n$. Thus, by \eqref{eq:qlog}, we have  \[  \sqrt{\frac{\log n}{\log(1/q)} }  = \frac{\sqrt{\log(1/q)\log n }}{\log(1/q)}\geq \frac{\sqrt{(1-q)\log n}}{\log(1/q)}  = \frac{\sqrt{c(\log\log n)^2}}{\log(1/q)}  \geq \sqrt{c} \cdot \frac{\log\frac{1}{1-q} }{\log(1/q)}, \]
	and so if we set $C:=C(c)=5/\sqrt{c}$ then the first statement follows from Case (ii). The second statement follows directly from Case (ii) as $ 5\log(1-q)/\log q \leq 5\log\left(\frac{1}{1-q}\right)/(1-q)$ by \eqref{eq:qlog}.\end{proof}

 \section{Conclusion \& Open Problems}\label{sec:probs}
 
 In this paper we introduced the $\cp(n,q)$ model, found a sharp threshold for the existence of cut vertices, determined the treewidth up to a $\log$ factor, and proved bounds on the vertex expansion and diameter for restricted values of $q$. Roughly speaking, the tangled path has three regimes: \textit{path-like} when $0\leq q\leq 1-\Theta(1/\log n)$, \textit{intermediate} where $ 1- \Theta(1/\log n)\leq q\leq 1- \Theta(1/n) $,  and \textit{expander} if $1-\Theta(1/n) \leq q \leq 1$. In the path-like regime there are no long edges and lots of independence, and in the expander regime one should be able to couple to the uniform case. The intermediate regime seems to be the trickiest and most interesting to analyze. 
 
 There are a wealth of open problems for $\cp(n,q)$ as one could study the effect of $q$ on almost any graph property/index of interest for sparse graphs. One fundamental problem is to determine the number of edges in $\cp(n,q)$, recalling that we disregard multi-edges. This deceptively non-trivial problem is related to clustering of consecutive numbers in Mallows permutations \cite{pinsky}. It would also be nice to close the gap for treewidth by obtaining tight bounds for all $q$.

 Theorem \ref{prop:sep} proves that $q=1- \pi^2/(6\log n)$ is a sharp threshold for containing a single vertex whose removal separates the graph into two macroscopic components. A key open problem is to determine if there is a notion of monotone property in the setting of tangled paths which guarantees the existence of a threshold (or even a sharp threshold). One candidate feature (for a property to be monotone with respect to) is the number of inversions in the permutation generating $\cp(n,q)$. However, one issue with parameterizing by the number of inversions is the fact that the tangled paths generated by $\sigma=(\sigma_1, \dots, \sigma_n)$ and its reverse $\sigma^{R}=(\sigma_n, \dots, \sigma_1)$ are isomorphic, but the number of inversions may differ greatly as $\inv(\sigma^R)=\binom{n}{2}-\inv(\sigma)$.
 \section*{Acknowledgements}
 The authors thank the anonymous referees for reading the paper carefully. In particular one referee found a mistake in the proof of Lemma \ref{lem:largeqpatterncouple}, and correcting this led the authors to a strengthening of Theorems \ref{treebanddiam} and \ref{thm:treepathcut}. All authors of this work were supported by EPSRC project EP/T004878/1: Multilayer Algorithmics to Leverage Graph Structure.  No data were created or analysed in this study.  An extended abstract appeared here \cite{euro}.

\bibliography{ref}
\bibliographystyle{plainurl}

\end{document}